\numberwithin{equation}{section}
\def \beq {\begin{equation}}
\def \eeq {\end{equation}}
\def \ba {\begin{array}}
\def \ea {\end{array}}
\def \dis {\displaystyle}
\renewcommand{\Im}{\mbox{\rm Im}}
\renewcommand{\Re}{\mbox{\rm Re}}
\renewcommand{\r}{\mathop{\rightarrow}}
\def\r{\rightarrow}
\renewcommand{\r}{\mathop{\rightarrow}}
\newcommand{\fdem}{\hspace*{\fill}~$\Box$\par\endtrivlist\unskip}
\renewcommand{\P}{\mathbb{P}}
\def \div {\mbox{\rm div}\,}
\def \cqfd {\hfill$\Box$}
\def \al {\alpha}
\def \be {\beta}
\def \ep {\varepsilon}
\def \ph {\varphi}
\def \si {\sigma}
\newcommand{\N}{\mathbb{N}}     
\newcommand{\Z}{\mathbb{Z}}
\newcommand{\R}{\mathbb{R}} 
\newcommand{\C}{\mathbb{C}} 
\newcommand{\Q}{\mathbb{Q}}
\newcommand{\dT}{\mathbb{T}}
\newcommand{\cha}{\mathds{1}}
\def \cD {\mathscr{D}}
\def \cE {\mathscr{E}}
\def \cI {\mathscr{I}}
\def \cM {\mathscr{M}}
\def \sfC {\mathsf{C}}
\def \sfE {\mathsf{E}}
\def \beq {\begin{equation}}
\def \eeq {\end{equation}}
\def \ba {\begin{array}}
\def \ea {\end{array}}
\def \bs {\bigskip}
\def \ecart {\noalign{\medskip}}
\newenvironment{proof}[1]{\textit{Proof#1.\,}}{\fdem}
\newtheorem{atheo}{Theorem}[section]
\newtheorem{alem}{Lemma}[section]
\newtheorem{arem}{Remark}[section]
\newtheorem{Aexa}{Exemple}[section]
\newtheorem{apro}[alem]{Proposition}
\title{Specific properties of the ODE's flow in
\\
dimension two {\em versus} dimension three}
\author{\large Marc Briane \& Lo\"\i c Herv\'e
\\*[.1em]
\normalsize Univ Rennes, INSA Rennes,  CNRS, IRMAR - UMR 6625, F-35000 Rennes, France
\\*[.1em]
\normalsize mbriane@insa-rennes.fr \& loic.herve@insa-rennes.fr
}
\begin{document}
\maketitle
\tableofcontents
\begin{abstract}
This paper deals with the asymptotics of the ODE's flow induced by a regular vector field $b$ on the $d$-dimensional torus $\R^d/\Z^d$.
First, we start by revisiting the Franks-Misiurewicz theorem which claims that the Herman rotation set of any two-dimensional continuous flow is a closed line segment of $\R^2$. Various general examples illustrate this result, among which a complete study of the Stepanoff flow associated with a vector field $b=a\,\zeta$, where $\zeta$ is a constant vector in $\R^2$.
Furthermore, several extensions of the Franks-Misiurewicz theorem are obtained in the two-dimensional  ODE's context. On the one hand, we provide some interesting stability properties in the case where the Herman rotation set has a commensurable direction. On the other hand, we present new results highlighting the exceptional character of the opposite case, {\em i.e.} when the Herman rotation set is a closed line segment with $0_{\R^2}$ at one end and with an irrational slope, if it is not reduced to a single point.
Besides this, given a pair $(\mu,\nu)$ of invariant probability measures for the flow, we establish new Fourier relations between the determinant $\det\,(\widehat{\mu b}(j),\widehat{\nu b}(k))$ and the determinant $\det\,(j,k)$ for any pair $(j,k)$ of non null integer vectors, which can be regarded as an extension of the Franks-Misiurewicz theorem.
Next, in contrast with dimension two, any three-dimensional closed convex polyhedron with rational vertices is shown to be the rotation set associated with a suitable vector field~$b$.
Finally, in the case of an invariant measure $\mu$ with a regular density and a non null mass $\mu(b)$ with respect to~$b$, we show that the homogenization of the two-dimensional transport equation with the oscillating velocity $b(x/\ep)$ as $\ep$ tends to $0$, leads us to a nonlocal limit transport equation, but with the effective constant velocity~$\mu(b)$.
\end{abstract}
\par\bs\noindent
{\bf Keywords:} ODE's flow, asymptotics, invariant measure, rotation set, Fourier coefficients, homogenization, transport equation
\par\bs\noindent
{\bf Mathematics Subject Classification:} 34E10, 37C10, 37C40, 42B05
\section{Introduction}
Let $b$ be a  $C^1$-regular $d$-dimensional vector field defined on the torus $Y_d:=\R^d\setminus\Z^d$.
In this paper, we study the asymptotics of the associated ODE's flow $X(\cdot,x)$ for $x\in Y_d$, defined by
\beq\label{bX}
\left\{\ba{ll}
\dis {\partial X\over\partial t}(t,x)=b(X(t,x)), & t\in\R
\\ \ecart
X(0,x)=x.
\ea\right.
\eeq
Our aim is to characterize the best as possible the asymptotics of the flow $X$, and among other the set $\rho(b)$, according to Misiurewicz and Ziemian \cite[(1.1)]{MiZi1}, composed of all the limit points related to the sequences
\beq\label{asyX}
{X(t_n,x_n)\over t_n}\quad\mbox{for any }t_n>0,\ \mbox{and any }x_n\in Y_d.
\eeq
By virtue of  \cite[Theorem~2.4, Remark~2.5, Corollary~2.6]{MiZi1} it turns out that the convex hull of~$\rho(b)$ agrees with the Herman rotation set $\sfC_b$ \cite{Her} composed of the vector masses
\beq\label{mub}
\mu(b)=\int_{Y_d}b(x)\,d\mu(x),
\eeq
with respect to the probability measures $\mu$ on $Y_d$ which are invariant for the flow, {\em i.e.} for any continuous function $\varphi$ in $Y_d$,
\beq\label{invmuph}
\forall\,t\in\R,\quad\int_{Y_d}\varphi(X(t,x))\,d\mu(x)=\int_{Y_d}\varphi(x)\,d\mu(x).
\eeq
The two-dimensional case is quite specific due to Poincar\'e-Bendixon's theory (see, {\em e.g.}, \cite[Chapter~VII]{Har}) combined with Siegel's curve theorem~\cite{Sie}. So in dimension two, assuming that the vector field $b$ is non vanishing in $Y_2$, Peirone~\cite{Pei} proved that the limit \eqref{asyX} actually exists for any point $x\in Y_2$, but he gave a counterexample in dimension three. In \cite{Pei} it can be noticed that Herman's rotation set $\sfC_b$ is either a unit set, when the flow has no periodic orbit in~$Y_2$, or a closed line segment (see Proposition~\ref{pro.Cb2d}). This collinearity result can  be also observed in other examples \cite{Tas,BrHe1,BrHe2}, and it is illustrated by Example~\ref{exa.pre}.
Actually, the result is more general, since Franks and Misiurewicz \cite[Theorem~1.2]{FrMi} proved that the rotation set of any two-dimensional continuous flow is always a closed line segment of a line passing through the null vector $0_{\R^2}$.
However, the situation is quite different for a general lift $F:\R^2\to\R^2$ (through the canonical surjection $\Pi:\R^2\to Y_2$) of some homeomorphism on $Y_2$ homotopic to identity, satisfying
\beq\label{lift}
\forall\,k\in\Z^2,\ \forall\,x\in\R^2,\quad F(x+k)=F(x)+k.
\eeq
Indeed, Kwapisz~\cite{Kwa} proved that any convex polygone of vertices at rational points of $\R^2$, is a rotation set of some suitable lift $F:\R^2\to\R^2$.
In the present context of ODE \eqref{bX} the time-$1$ flow $F=X(1,\cdot)$ is such a lift.
To deepen our exploration, we show (see Proposition~\ref{pro.FX} and Example~\ref{exa.LM}) that the two-dimensional lift $F$ introduced by Llibre and Mackay \cite[Example~2]{LlMa} (which is also revisited in \cite{MiZi2}) has the whole square $[0,1]^2$ as a rotation set, but it cannot be represented by any flow $X(1,\cdot)$ solution to~\eqref{bX}.
Actually, the set of the time-$1$ flows $X(1,\cdot)$ solutions to \eqref{bX} is strictly contained in the set of the lifts $F$ of homeomorphisms on $Y_2$ homotopic to identity and satisfying \eqref{lift}.
\par
In Section~\ref{ss.revFM} we first give a partial proof of the Franks-Miziurewicz collinearity result (see Proposition~\ref{pro.FMcol}) assuming that one of the two invariant probability measures is regular.
Then, revisiting different works \cite{Oxt1,Aku,Str1,Str2,Fra1,Fra2,Har,Pei,BrHe1} we recover for the specific two-dimensional ODE's flow (see Proposition~\ref{pro.Cb2d}) the alternatives satisfied by the rotation set $\sfC_b$ -- obtained in \cite[Theorem~1.2]{FrMi} (recalled in Theorem~\ref{thm.FM} below) for any continuous flow -- in the following more accurate picture:
\begin{itemize}
\item[$({\rm I})$] \vskip -.2cm
$\sfC_b = I \,\zeta$, where $I$ is a closed segment of $\R$ not reduced to a single point, and $\zeta$ is a commensurable vector of $\R^2$.

\item[$({\rm II})$] \vskip -.2cm
$\sfC_b = \{\zeta\}$, where $\zeta$ is a non null commensurable vector of $\R^2$.

\item[$({\rm III})$] \vskip -.2cm
$\sfC_b = \{\zeta\}$, where $\zeta$ is an incommensurable vector of $\R^2$.
 
 \item[$({\rm IV})$] \vskip -.2cm
 $\sfC_b$ is either $\{0_{\R^2}\}$, or $\sfC_b = I\,\zeta$, where $I$ is a closed segment of $\R$ such that $0\in I\neq\{0\}$, and $\zeta$ is an incommensurable vector of $\R^2$.
\end{itemize}
\par
Next, in Section~\ref{ss.morerotset} we study several extensions of the Franks-Misiurewicz theorem (see Proposition~\ref{pro.Cb2dcom}, Proposition~\ref{pro.FMincom} and Theorem~\ref{thm.Ssgn}), which are specific to the present two-dimensional ODE's context, and which are new to the best of our knowledge. First of all, some stability properties in connection  with case~(I) are investigated for non vanishing vector fields~$b$ (see Proposition~\ref{pro.Cb2dcom}). More precisely, writing $b=\rho\, \Phi$ with $\rho$ a positive function in $C^1(Y_2)$ and $\Phi$ a non vanishing vector field in $C^1(Y_2)^2$, and assuming that the rotation set $\sfC_b$ satisfies case~(I), we prove that the direction of $\sfC_b$ only depends on $\Phi$, and that the perturbed rotation set $\sfC_{\tilde{b}}$ with $\widetilde{b}=\widetilde{\rho}\,\Phi$, still satisfies case~(I) provided that the uniform norm of $|\rho-\widetilde{\rho}|$ is small enough.
On the other hand, Franks and Misiurewicz proved that in the incommensurable case~(IV) the null vector $0_{\R^2}$ is always an end of the closed segment $\sfC_b$, and they mentioned that it is delicate to find examples of such a situation.
Then, rather than proving this incommensurable case in the ODE's context (see Remark~\ref{rem.Qincom} on this point), we provide (see Proposition~\ref{pro.FMincom} and Theorem~\ref{thm.Ssgn}) general classes of vectors fields $b$ which are in some sense complementary of the incommensurable case:
\begin{itemize}
\item First, we consider (see Proposition~\ref{pro.FMincom} $(ii)$) the vector fields of type $b=a\,\Phi$, where $a$ is a vanishing non negative function in $C^1(Y_2)$ with $1/a\in L^1(Y_2)$, and  $\Phi$ is a non vanishing divergence free vector field in $C^1(Y_2)^2$.
Assuming that the mean value $\overline{\Phi}$ of $\Phi$ is incommensurable in~$\R^2$, we prove that the rotation set $\sfC_b$ is the closed line segment $[0_{\R^2},\overline{\Phi}]\neq\{0_{\R^2}\}$.
In this context, we may observe (see Remark~\ref{rem.Sinc}) that the vector field $b$ cannot be in $C^2(Y_2)^2$, which shows the exceptional character of the closed line segment occurrence in case~(IV).
\item Second, we consider (see Theorem~\ref{thm.Ssgn}) the vector fields of type $b=a\,\Phi$, where $a$ is a changing sign function in $C^1(Y_2)$, and $\Phi$ is a non vanishing vector field in $C^3(Y_2)^2$ having a positive invariant probability measure $\sigma(x)\,dx$ with $\sigma\in C^3(Y_2)$.
Assuming that the mean value of $\sigma\,\Phi$ is incommensurable in $\R^2$, we prove that the support of any invariant probability measure for the flow $X$ is contained in the set $\{a=0\}$, so that the rotation set $\sfC_b$ is reduced to $\{0_{\R^2}\}$.
\end{itemize}
These two results illuminate the alternative of the incommensurable case~(IV).
More precisely, according to the incommensurable case of the Franks-Misiurewicz \cite[Theorem~1.2]{FrMi}, even the change of sign of the function $a$ in $b=a\,\Phi$ cannot allow $0_{\R^2}$ to be an interior point of the closed line segment $\sfC_b$, contrary to the commensurable case~(I).
\par
In Section~\ref{ss.Sflow} we fully illustrate the cases (I), (II), (III), (IV) and the previous results by presenting a complete picture of the Stepanoff flow~\cite{Ste,Oxt2} associated with vector fields of type $b=a\,\zeta$, where $a$ is a function in $C^1(Y_2)$ and $\zeta$ is a unit vector of $\R^2$.
In particular, if the vector $\zeta$ is incommensurable in $\R^2$,  the case where $\sfC_{a\,\zeta}$ is a closed line segment not reduced to $\{0_{\R^2}\}$ only holds when $a$ has a constant sign and $1/a\in L^1(Y_2)$.
This again highlights the exceptionality of this case, since $a$ cannot be actually in $C^2(Y_2)$ (see Remark~\ref{rem.Sinc}).
\par
In Section~\ref{s.intFrel}, we prove (see Theorem~\ref{thm.relmunu}) the following original, up to our best knowledge, integral relation satisfied by any pair $(\mu,\nu)$ of invariant probability measures for the flow $X$ and any function $\rho\in C^2(Y_2\!\times\!Y_2)$,
 \beq\label{relation}
\ba{l}
\dis \int_{Y_2}\kern -.2em\int_{Y_2}\rho(x,y)\det\,(b(x),b(y))\,d\mu(x)\,d\nu(y)
\\ \ecart
\dis =R_\perp\nu(b)\cdot\int_{Y_2}\kern -.2em\left(\int_{Y_2}\rho(x,y)\,dy\right)b(x)\,d\mu(x)
-R_\perp\mu(b)\cdot\int_{Y_2}\kern -.2em\left(\int_{Y_2}\rho(x,y)\,dx\right)b(y)\,d\nu(y)
\\ \ecart
\dis +\int_{Y_2}\kern -.2em\int_{Y_2}\kern -.2em\left({\partial^2\rho\over\partial x_1\partial y_2}-{\partial^2\rho\over\partial x_2\partial y_1}\right)u^\sharp(x)\,v^\sharp(y)\,dxdy,
\ea
\eeq
where $u^\sharp$ and $v^\sharp$ are the stream functions with bounded variation in $Y_2$, satisfying the vector-valued measure representations
\beq\label{bmuusi}
b\,\mu=\mu(b)+\nabla^\perp u^\sharp\quad\mbox{and}\quad b\,\nu=\nu(b)+\nabla^\perp v^\sharp\quad\mbox{in }Y_2,
\eeq
where $\nabla^\perp$ denotes the orthogonal gradient.
The key-ingredient of relation \eqref{relation} is the Liouville theorem for invariant measures (see, {\em e.g.}, \cite[Theorem~1, Section~2.2]{CFS}) which is considered as a divergence-curl result for invariant measures (see Proposition~\ref{pro.divcurl} and \cite[Proposition~2.2]{BrHe1}), and which is combined with the representation of a divergence free field by an orthogonal gradient in dimension two. Such a representation does not hold in dimension three, so that there is no three-dimensional relation similar to~\eqref{relation}. Moreover, it turns out that the integral relation~\eqref{relation} is equivalent to the Fourier relations
\beq\label{Fmubnubi}
\forall\,(j,k)\in(\Z^2\setminus\{0_{\R^2}\})^2\cup\{(0_{\R^2},0_{\R^2})\},\quad \det\big(\widehat{\mu b}(j),\widehat{\nu b}(k)\big)
=-\,4\pi^2\det\,(j,k)\,\widehat{u^\sharp}(j)\,\widehat{v^\sharp}(k).
\eeq 
Relations~\eqref{Fmubnubi} show that the vector-valued measures $\mu\,b,\nu\,b$ for any pair $(\mu,\nu)$ of invariant probability measures for the flow \eqref{bX}, are strongly correlated through their Fourier coefficients.
Noting that the case $j=k=0_{\R^2}$ in \eqref{Fmubnubi} agrees with the Franks-Misiurewicz collinearity result \cite[Theorem~1.2]{FrMi}, formulas \eqref{Fmubnubi} with non null collinear integer vectors may be thus regarded (see Remark~\ref{rem.Fcoef}) as an extension of the collinearity result to the specific two-dimensional ODE's flow \eqref{bX}.
Relations~\eqref{Fmubnubi} do not hold true in general if only one of the two integer vectors is null.
However, if the direction of the rotation set $\sfC_b$ is incommensurable in $\R^2$ with $1/|b|$ in $L^1(Y_2)$, then the extension of equalities \eqref{Fmubnubi} to any pair of integer vectors characterizes surprisingly some Stepanoff flows (see Proposition~\ref{pro.0hmub})
\par
In Section~\ref{s.3d}, contrary to the former two-dimensional results, we prove (see Theorem~\ref{thm.Cb3d}) that each closed convex polyhedron of $\R^3$ is a rotation set $\sfC_b$ for a suitable vector field $b$.
\par
Finally, in Section~\ref{s.hom} the collinearity result satisfied by Herman's rotation set is applied to the homogenization of the two-dimensional transport equation with an oscillating velocity $b(x/\ep)$ as $\ep\to 0$.
This homogenization problem has been the subject of several works \cite{Bre,Gol1,Gol2,HoXi,Tas} under the ergodic assumption, and more recently \cite{Bri2,BrHe1,BrHe2} with a non ergodic assumption.
If these assumptions hold true, the homogenized equation turns to be a transport equation with a constant effective velocity.
Otherwise, it is known \cite{Tar,AHZ1,AHZ2} that the homogenized equation involves a nonlocal term.
Here, assuming that the flow $X$ has an invariant measure $\mu$ with $\mu(b)\neq 0_{\R^2}$ and a regular enough density, we derive (see Theorem~\ref{thm.homtreq}) a nonlocal homogenized equation, but with the effective constant velocity $\mu(b)$.
\subsection{Notation}\label{ss.not}
\begin{itemize}
\item $(e_1,\dots,e_d)$ denotes the canonical basis of $\R^d$, and $0_{\R^d}$ denotes the null vector of $\R^d$.
\item $I_d$ denotes the unit matrix of $\R^{d\times d}$, and $R_\perp$ denotes the $(2\times 2)$ rotation matrix $\,\footnotesize{\begin{pmatrix}0 & 1 \\ -1 & 0\end{pmatrix}}$.
\item $``\cdot"$ denotes the scalar product and $|\cdot|$ the euclidean norm in $\R^d$.
\item $|A|$ denotes the Lebesgue measure of any measurable set in $\R^d$ or $Y_d$.
\item $Y_d$, $d\geq 1$, denotes the $d$-dimensional torus $\R^d/\Z^d$ (which may be identified to the unit cube $[0,1)^d$ in $\R^d$), and $0_{Y_d}$ denotes the null vector of $Y_d$.
\item $\Pi$ denotes the canonical surjection from $\R^d$ on $Y_d$.
\item $d_{Y_2}$ denotes the distance function to the point $0_{Y_2}$ in $Y_2$, {\em i.e.}
\beq\label{dY2}
d_{Y_2}(x):=\min_{k\in\Z^2}|x-k|\leq{1\over\sqrt{2}}\quad\mbox{for }x\in Y_2.
\eeq
\item $\cha_A$ denotes the characteristic function of a set $A$, and ${\rm Id}:E\to E$ denotes the identity function in a set $E$ given by the context.
\item $C^k_c(\R^d)$, $k\in\N\cup\{\infty\}$, denotes the space of the real-valued functions in $C^k(\R^d)$ with compact support in $\R^d$.
\item $C^k_\sharp(Y_d)$, $k\in\N\cup\{\infty\}$, denotes the space of the real-valued functions $f\in C^k(\R^d)$ which are $\Z^d$-periodic, {\em i.e.}
\beq\label{fper}
\forall\,k\in\Z^d,\ \forall\,x\in \R^d,\quad f(x+k)=f(x).
\eeq
\item The jacobian matrix of a $C^1$-mapping $F:\R^d\to\R^d$ is denoted by the matrix-valued function $\nabla F$ with entries $\dis {\partial F_i\over\partial x_j}$ for $i,j\in\{1,\dots,d\}$.
\item The abbreviation ``a.e.'' for almost everywhere, will be used throughout the paper.
The simple mention ``a.e.'' refers to the Lebesgue measure on $\R^d$.
\item $dx$ or $dy$ denotes the Lebesgue measure on $\R^d$.
\item For a Borel measure $\mu$ on $Y_d$, extended by $\Z^d$-periodicity to a Borel measure $\widetilde{\mu}$ on $\R^d$, a $\widetilde{\mu}$-measurable function $f:\R^d\to\R$ is said to be $\Z^d$-periodic $\widetilde{\mu}$-a.e. in $\R^d$, if
\beq\label{fpermu}
\forall\,k\in\Z^d,\quad f(\cdot+k)=f(\cdot)\;\;\mbox{$\widetilde{\mu}$-a.e. on }\R^d.
\eeq
\item For a Borel measure $\mu$ on $Y_d$, $L^p_\sharp(Y_d,\mu)$, $p\geq 1$, denotes the space of the $\mu$-measurable functions $f:Y_d\to\C$ such that
\[
\int_{Y_d}|f(x)|^p\,d\mu(x)<\infty.
\]
\item $L^p_\sharp(Y_d)$, $p\geq 1$, simply denotes the space of the Lebesgue measurable functions $f$ in $L^p_{\rm loc}(\R^d)$, which are $\Z^d$-periodic $dx$-a.e. in $\R^d$.
\item  $\cM_{\rm loc}(\R^d)$ denotes the space of the non negative Borel measures on $\R^d$, which are finite on any compact set of $\R^d$.
\item $\cM_\sharp(Y_d)$ denotes the space of the non negative Radon measures on $Y_d$, and $\cM_p(Y_d)$ denotes the space of the probability measures on $Y_d$.
\item $\cD'(\R^d)$ denotes the space of the distributions on $\R^d$.
\item $BV_\sharp(Y_d)$ denotes the space of the functions $f\in BV_{\rm loc}(\R^d)$ ({\em i.e.} with bounded variation locally in $\R^d$) which are $\Z^d$-periodic a.e. in $\R^d$,
namely $f\in L^2_\sharp(Y_d)$ and $\nabla f\in\cM_\sharp(Y_d)^d$.
\item For a Borel measure $\mu$ on $Y_d$ and for $f\in L^1_\sharp(Y_d,\mu)$, we denote
\beq\label{muf}
\mu(f):=\int_{Y_d}f(x)\,d\mu(x),
\eeq
which is simply denoted by $\overline{f}$ when $\mu$ is Lebesgue's measure.
The same notation is used for a vector-valued function in $L^1_\sharp(Y_d,\mu)^d$. 
If $f$ is non negative, its harmonic mean $\underline{f}$ is defined by
\[
\underline{f}:=\left(\int_{Y_d}{dy\over f(y)}\right)^{-1}.
\]
\item For a given measure $\lambda\in\cM_\sharp(Y_d)$, the Fourier coefficients of $\lambda$ are defined by
\[
\widehat{\lambda}(k):=\int_{Y_d}e^{-2i\pi k\cdot x}\,d\lambda(x)\quad\mbox{for }k\in\Z^d.
\]
The same notation is used for a vector-valued measure in $\cM_\sharp(Y_d)^d$.
\item $c$ denotes a positive constant which may vary from line to line.
\end{itemize}
\subsection{A few tools of ergodic theory}\label{ss.erg}
Let $b\in C^1_\sharp(Y_d)^d$.
A measure $\mu\in\cM_p(Y_d)$ on $Y_d$ is said to be {\em invariant} for the flow $X$ defined by \eqref{bX} if
\beq\label{invmu}
\forall\,t\in\R,\ \forall\,\psi\in C^0_\sharp(Y_d),\quad\int_{Y_d}\psi\big(X(t,y)\big)\,d\mu(y)=\int_{Y_d}\psi(y)\,d\mu(y).
\eeq
An invariant probability measure $\nu$ for the flow $X$ is said to be {\em ergodic}, if
\beq\label{Xerg}
\forall\,f\in L^1_\sharp(Y_d,\nu),\mbox{ invariant for $X$ w.r.t. $\nu$},\quad f=\nu(f)\mbox{ $\nu$-a.e. in $Y_d$}.
\eeq
Then, define the set of invariant measures
\beq\label{Ib}
\cI_b:=\big\{\mu\in\cM_p(Y_d): \mu\mbox{ invariant for the flow }X\big\},
\eeq
and the subset of $\cI_b$ of composed of the ergodic measures
\beq\label{cEb}
\cE_b:=\big\{\nu\in\cI_b: \nu\mbox{ ergodic for the flow }X\big\}.
\eeq
It is known that the ergodic measures for the flow $X$ are the extremal points of the convex set $\cI_b$ so that
\beq\label{IbEb}
\cI_b={\rm conv}(\cE_b).
\eeq
Also define for any vector field $b\in C^1_\sharp(Y_d)^d$ the following non empty subsets of $\R^d$:
\begin{itemize}
\item According to \cite[(1.1)]{MiZi1} the set of all the limit points of the sequences $\big(X(n,x_n)/n\big)_{n\geq 1}$ in $\R^d$ obtained for any sequence $(x_n)_{n\geq 1}$ in $Y_d$, is defined by
\beq\label{rhob}
\rho(b):=\bigcap_{n\geq 1}\left(\overline{\bigcup_{x\in Y_d}\left\{{X(k,x)\over k}:k\geq n\right\}}\right).
\eeq
By \cite[Lemma~2.2, Theorem~2.3]{MiZi1} it is a compact and connected set of $\R^d$.
\item The so-called Herman \cite{Her} rotation set is defined by
\beq\label{Cb}
\sfC_b:=\big\{\mu(b):\mu\in\cI_b\big\}.
\eeq
It is clearly a compact and convex set of $\R^d$.
\item By restriction to ergodic measures define the subset of $\sfC_b$
\beq\label{Eb}
\sfE_b := \big\{\nu(b),\ \nu\in \cE_b\big\}.
\eeq
\end{itemize}
An implicit consequence of \cite[Theorem~2.4, Remark~2.5, Corollary~2.6]{MiZi1} shows that
\beq\label{rhobCb}
\rho(b)\subset \sfC_b={\rm conv}\,(\rho(b)) \quad\mbox{and}\quad \# \rho(b)=1\Leftrightarrow \# \sfC_b=1.
\eeq
\begin{arem}\label{rem.X1}
Note that for any $k\in\N$, $X(k,\cdot)$ agrees with the $k$-th iteration of $X(1,\cdot)$ in the definition \eqref{rhob} of $\rho(b)$.
Hence, the equality $\sfC_b={\rm conv}\,(\rho(b))$ in \eqref{rhobCb} shows that the rotation set $\sfC_b$ is completely characterized by the flow $X(1,\cdot)$ and its iterations.
This characterization is still more flagrant in dimension two, since by virtue of \cite[Theorem~3.4~$(b)$]{MiZi1} we have $\sfC_b=\rho(b)$.
\end{arem}
In view of \eqref{IbEb} and \eqref{Eb} we also have (see Remark~\ref{rem.extCb} below)
\beq\label{CbEb}
\sfC_b = \mbox{\rm conv}\, (\sfE_b).
\eeq
Note that by definition~\eqref{rhob} the equivalence of \eqref{rhobCb} can be written for any $\zeta\in \R^d$,
\beq\label{Cbsindis}
\sfC_b=\{\zeta\}\quad\Leftrightarrow\quad\forall\,x\in Y_d,\;\; \lim_{n\to\infty}{X(n,x)\over n}=\zeta\quad\Leftrightarrow\quad
\forall\,x\in Y_d,\;\; \lim_{t\to\infty}{X(t,x)\over t}=\zeta.
\eeq
The second equivalence of \eqref{Cbsindis} is an easy consequence of the semi-group property satisfied by the flow, {\em i.e.}
\beq\label{sgX}
\forall\,x\in Y_d,\ \forall\,s,t\in\R,\quad X(s, X(t,x))=X(s+t,x).
\eeq
\par
We have the following characterization of an invariant measure known as Liouville's theorem, which can also be regarded as a divergence-curl result with measures (see Proposition~\ref{pro.divcurl} and \cite[Remark~2.2]{BrHe1} for further details).
\begin{apro}[Liouville's theorem]\label{pro.divcurl}
Let $b\in C^1_\sharp(Y_d)^d$, and let $\mu\in\cM_\sharp(Y_d)$. We define the Borel measure $\widetilde{\mu}\in\cM_{\rm loc}(\R^d)$ on~$\R^d$ by
\beq\label{tmu}
\int_{\R^d}\ph(x)\,d\widetilde{\mu}(x)=\int_{Y_d} \ph_\sharp(y)\,d\mu(y),\quad\mbox{where}\quad \ph_\sharp(\cdot):=\sum_{k\in\Z^d}\ph(\cdot+k)
\quad\mbox{for }\ph\in C^0_c(\R^d).
\eeq
Then, the three following assertions are equivalent:
\begin{enumerate}[$(i)$]
\item $\mu$ is invariant for the flow $X$, {\em i.e.} \eqref{invmu} holds,
\item $\widetilde{\mu}\,b$ is divergence free in the space $\R^d$, {\em i.e.}
\beq\label{dtmub=0}
\div(\widetilde{\mu}\,b)=0\quad\mbox{in }\cD'(\R^d),
\eeq
\item $\mu\,b$ is divergence free in the torus $Y_d$, {\em i.e.}
\beq\label{dmub=0}
\forall\,\psi\in C^1_\sharp(Y_d),\quad \int_{Y_d} b(y)\cdot\nabla\psi(y)\,d\mu(y)=0.
\eeq
\end{enumerate}
\end{apro}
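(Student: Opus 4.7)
The plan is to separate the equivalence $(ii)\Leftrightarrow(iii)$, which is essentially formal and reduces to the periodization bridge $\widetilde{\mu}\leftrightarrow\mu$, from $(i)\Leftrightarrow(iii)$, which carries the substantive Liouville content. For the former, testing~\eqref{dtmub=0} against an arbitrary $\varphi\in C^1_c(\R^d)$ and using~\eqref{tmu} together with the $\Z^d$-periodicity of $b$ rewrites the divergence condition as $\int_{Y_d} b\cdot\nabla(\varphi_\sharp)\,d\mu=0$. The equivalence with~\eqref{dmub=0} then reduces to the observation that every $\psi\in C^1_\sharp(Y_d)$ arises as some $\varphi_\sharp$: fix a cut-off $\chi\in C^1_c(\R^d)$ with $\chi_\sharp\equiv 1$ and set $\varphi:=\chi\,\psi$, so that $\varphi_\sharp=\psi$ by periodicity of~$\psi$.

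The implication $(i)\Rightarrow(iii)$ follows by differentiating~\eqref{invmu} at $t=0$ under the integral sign (justified because $\mu$ is finite on the compact $Y_d$ and the integrand is $C^1$) and applying the chain rule together with~\eqref{bX}: this yields $\int_{Y_d} b\cdot\nabla\psi\,d\mu=0$ for every $\psi\in C^1_\sharp(Y_d)$, which is~\eqref{dmub=0}. For the converse $(iii)\Rightarrow(i)$, I would set $F_\psi(t):=\int_{Y_d}\psi(X(t,y))\,d\mu(y)$ and compute
\[
F'_\psi(t)=\int_{Y_d}\nabla\psi(X(t,y))\cdot b(X(t,y))\,d\mu(y),
\]
which is not visibly zero from~\eqref{dmub=0} applied to~$\psi$ itself.

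The key idea, which I expect to be the main step, is to apply~\eqref{dmub=0} instead to the test function $\psi_t(y):=\psi(X(t,y))$, which belongs to $C^1_\sharp(Y_d)$ since the $\Z^d$-periodicity of~$b$ forces $X(t,y+k)=X(t,y)+k$ for all $k\in\Z^d$. By the chain rule $\nabla\psi_t(y)=\nabla_y X(t,y)^T\nabla\psi(X(t,y))$, and the identity
\[
\nabla_y X(t,y)\,b(y)=b(X(t,y))
\]
turns $\int_{Y_d}b\cdot\nabla\psi_t\,d\mu$ into exactly $F'_\psi(t)$; hence~\eqref{dmub=0} forces $F'_\psi\equiv 0$, and~\eqref{invmu} follows from $F_\psi(t)=F_\psi(0)$ by density of $C^1_\sharp(Y_d)$ in $C^0_\sharp(Y_d)$. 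The main obstacle is the verification of the displayed identity, which I would settle by uniqueness in the linear Cauchy problem $\partial_t Z=\nabla b(X(t,y))\,Z$ with $Z(0)=b(y)$: both $t\mapsto\nabla_y X(t,y)\,b(y)$ and $t\mapsto b(X(t,y))$ are easily seen to solve it, so they coincide. Modulo this step, everything else reduces to routine chain-rule manipulations and dominated convergence.
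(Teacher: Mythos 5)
The paper states Proposition~\ref{pro.divcurl} without giving its own proof: it refers to it as a known characterization (Liouville's theorem), citing \cite{CFS} and \cite[Remark~2.2, Proposition~2.2]{BrHe1}, so there is no internal argument against which to compare yours. Evaluated on its own merits, your proof is correct and complete in its essential structure. The bridge between $(ii)$ and $(iii)$ is indeed purely formal: for any $\varphi\in C^1_c(\R^d)$ the periodicity of $b$ gives $[b\cdot\nabla\varphi]_\sharp=b\cdot\nabla(\varphi_\sharp)$, so \eqref{tmu} turns the $\R^d$ pairing into a $Y_d$ pairing, and the surjectivity of $\varphi\mapsto\varphi_\sharp$ onto $C^1_\sharp(Y_d)$ (via the cut-off $\chi$ with $\chi_\sharp\equiv 1$, exactly the device of \cite[Lemma~3.5]{Bri1} that the paper also uses) closes the loop. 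The implication $(i)\Rightarrow(iii)$ by differentiating \eqref{invmu} at $t=0$ is standard. The substantive step, $(iii)\Rightarrow(i)$, is handled exactly right: you test \eqref{dmub=0} against $\psi_t=\psi\circ X(t,\cdot)$, whose $\Z^d$-periodicity follows from $X(t,y+k)=X(t,y)+k$, and the pushforward identity $\nabla_y X(t,y)\,b(y)=b(X(t,y))$ (proved by uniqueness for the linear variational ODE $\partial_t Z=\nabla b(X(t,y))\,Z$, $Z(0)=b(y)$) converts the resulting integral into $F'_\psi(t)$, giving $F_\psi\equiv F_\psi(0)$; density of $C^1_\sharp(Y_d)$ in $C^0_\sharp(Y_d)$ then yields \eqref{invmu}.

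Two small polish points, neither of which is a gap. First, \eqref{dtmub=0} is stated in $\cD'(\R^d)$, whose test space is $C^\infty_c(\R^d)$; to use $\varphi=\chi\,\psi$ with $\psi$ merely $C^1$ you should either take $\chi\in C^\infty_c$ and note that $\widetilde{\mu}\,b$ is a Radon measure so the distributional identity extends to $C^1_c$ test functions by density, or first derive \eqref{dmub=0} for $\psi\in C^\infty_\sharp(Y_d)$ and then pass to $C^1_\sharp(Y_d)$ by smoothing. Second, the differentiation under the integral sign in $F_\psi'$ deserves a one-line remark that $y\mapsto\nabla\psi(X(t,y))\cdot b(X(t,y))$ is continuous in $(t,y)$ and uniformly bounded on compacts, which together with $\mu$ being a finite measure on the compact $Y_d$ legitimates the differentiation.

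Your proof is a clean and self-contained route to the result, and the pushforward identity is exactly the point where $C^1$-regularity of $b$ enters essentially.
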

\begin{arem}\label{rem.extCb}
Let $b\in C^1_\sharp(Y_d)^d$.
It turns out that for any extremal point $\xi$ of $\sfC_b$, there exists an ergodic measure $\nu\in \cE_b$ \eqref{cEb} such that $\nu(b)=\xi$. Hence, $\sfC_b$ is the convex hull of the set $\sfE_b$ \eqref{Eb}. Equality \eqref{CbEb} is stated without proof in \cite[Remark~2.5]{MiZi1}.
However, although the set $\cI_b$ is the closure of the convex hull of $\cE_b$, equality \eqref{CbEb} does not seem obvious to us. For reader's convenience a complete proof of \eqref {CbEb} is given in Appendix~\ref{a.CbEb}.
\end{arem}
\subsection{The Franks-Misiurewicz result for 2D continuous flows}\label{ss.conj}
Franks and Misiurewicz \cite[Theorem~1.2]{FrMi} proved that the rotational set of any two-dimensional continuous flow is composed of collinear vectors, which yields the following result in the specific case of the ODE's flow.
\begin{atheo}[\cite{FrMi}, Theorem~1.2]\label{thm.FM}
Let $b$ be a two-dimensional vector field in $C^1_\sharp(Y_2)^2$.
Then, we have
\beq\label{FMcol}
\forall\,(\mu,\nu)\in\cI_b\times\cI_b,\quad \det\,(\mu(b),\nu(b))=0,
\eeq
{\em i.e.} Herman's rotation set $\sfC_b$ is a closed line segment included in a vector line of $\R^2$.
\par\noindent
More precisely, the rotation set satisfies one of the three following assertions:
\begin{itemize}
\item[$(a)$] \vskip -.2cm
$\sfC_b$ is a unit set of $\R^2$,

\item[$(b)$] \vskip -.2cm
$\sfC_b$ is a closed segment of a line passing through $0_{\R^2}$ and a non null point of $\Q^2$,

\item[$(c)$] \vskip -.2cm
$\sfC_b$ is a closed line segment $[0_{\R^2},\zeta]$ with $\zeta$ an incommensurable vector of $\R^2$.
\end{itemize}
\end{atheo}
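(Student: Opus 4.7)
The theorem splits into the collinearity identity~\eqref{FMcol}, which combined with the convexity of $\sfC_b$ forces it to lie inside a vector line through $0_{\R^2}$, and the trichotomy (a)--(c) refining the commensurability of that line and the position of $0_{\R^2}$ inside the resulting segment. I would obtain the collinearity from Liouville's theorem together with the two-dimensional stream-function representation, and the trichotomy from a combination of elementary convex geometry with a Poincar\'e--Bendixson argument on the lift of the flow.

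\textbf{Collinearity.} For $(\mu,\nu)\in\cI_b\times\cI_b$, Proposition~\ref{pro.divcurl} states that the two vector-valued measures $\mu\,b$ and $\nu\,b$ are divergence free on $Y_2$. In dimension two this yields the stream-function decompositions
\[
\mu\,b=\mu(b)+\nabla^\perp u^\sharp,\qquad \nu\,b=\nu(b)+\nabla^\perp v^\sharp,\qquad u^\sharp,v^\sharp\in BV_\sharp(Y_2),
\]
as in~\eqref{bmuusi}. Starting from
\[
\det(\mu(b),\nu(b))=\int_{Y_2}\kern -.2em\int_{Y_2}\det(b(x),b(y))\,d\mu(x)\,d\nu(y),
\]
I would substitute these two decompositions into the integrand and integrate by parts in each variable on the torus; the $\Z^2$-periodic boundary terms vanish, and the surviving cross contributions reproduce $\det(\mu(b),\nu(b))$ on the right-hand side with a coefficient different from $1$, forcing it to vanish. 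This is precisely the specialization $\rho\equiv 1$ of the general integral identity~\eqref{relation} established later in Section~\ref{s.intFrel}, and the calculation is rigorous as soon as $\mu$ and $\nu$ possess $C^1_\sharp(Y_2)$ densities, so that $u^\sharp,v^\sharp\in C^2_\sharp(Y_2)$ -- the setting of the partial proof given in Proposition~\ref{pro.FMcol}. For an arbitrary pair of invariant measures the same scheme should go through at the $BV_\sharp$ level, or by smoothly mollifying the periodic extensions $\widetilde\mu$ and $\widetilde\nu$ and passing to the weak-$\ast$ limit on the rotation vectors and the stream functions.

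\textbf{Trichotomy and main obstacle.} Granted~\eqref{FMcol}, $\sfC_b$ is a closed segment inside some vector line $\R\,\zeta$. If $\sfC_b$ is a singleton we are in case~(a); otherwise its direction $\zeta$ is well defined up to sign, and the dichotomy $\zeta\in\Q^2$ versus $\zeta$ incommensurable separates~(b) from~(c). The nontrivial content of case~(c) is that $0_{\R^2}$ must then be an endpoint of the segment. I would argue by contradiction: assume $\sfC_b=[\alpha_1\zeta,\alpha_2\zeta]$ with $\alpha_1<0<\alpha_2$ and $\zeta$ incommensurable; by~\eqref{CbEb} there exist ergodic invariant measures $\nu_1,\nu_2\in\cE_b$ with rotation vectors $\nu_1(b),\nu_2(b)$ on opposite sides of $0_{\R^2}$ along $\R\,\zeta$. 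A Birkhoff-type argument on $\nu_i$-typical trajectories, combined with Poincar\'e--Bendixson on the two-dimensional orbit closures and an intermediate-value step on the universal cover $\widetilde X(t,\cdot):\R^2\to\R^2$, produces an orbit whose asymptotic displacement is a non-zero rational multiple of $\zeta$, contradicting the incommensurability of $\zeta$. This topological step is the principal obstacle of the whole statement: it is the original analytic input of Franks--Misiurewicz and does not seem reducible to Liouville's theorem alone. The secondary, purely technical difficulty is the rigorous extension of the collinearity computation from the regular-density case to an arbitrary pair $(\mu,\nu)\in\cI_b\times\cI_b$, which the paper addresses only in the former case.
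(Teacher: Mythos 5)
This statement is cited from Franks--Misiurewicz \cite{FrMi}; the paper does not prove it but offers partial proofs (Proposition~\ref{pro.FMcol} for collinearity under a regularity hypothesis, Proposition~\ref{pro.Cb2d} for a refined trichotomy) and explicitly acknowledges in Remarks~\ref{rem.Qcol} and~\ref{rem.Qincom} that two pieces of your proposed scheme are open problems in this framework. Measured against that, your proposal contains two genuine gaps.

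First, the collinearity argument as you outline it is circular. You propose recovering $\det(\mu(b),\nu(b))=0$ by setting $\rho\equiv 1$ in the integral identity~\eqref{relation}; but the paper derives~\eqref{relation} (Theorem~\ref{thm.relmunu}) \emph{assuming}~\eqref{FMcol}, since the case $j=k=0_{\R^2}$ of the Fourier relations~\eqref{Fmubnub} is precisely~\eqref{FMcol} and is cited, not proved, and the drop of the index $(0_{\R^2},0_{\R^2})$ in the derivation of~\eqref{Sigma} uses~\eqref{FMcol}. If instead you try the direct route --- substitute the stream-function decompositions into $\displaystyle\int\!\!\int\det(b(x),b(y))\,d\mu(x)\,d\nu(y)$ and integrate by parts --- the cross terms vanish because $\overline{\nabla u^\sharp}=\overline{\nabla v^\sharp}=0_{\R^2}$, and you obtain $\det(\mu(b),\nu(b))=\det(\mu(b),\nu(b))$: a tautology, not the claimed ``coefficient different from $1$.'' The nontrivial mechanism in Proposition~\ref{pro.FMcol} is the divergence--curl identity~\eqref{dmub=0} tested against the mollified stream function $v_n^\sharp$, and crucially the bound on the error term via $|b|\,d\mu=\sigma\,dx$ with $\sigma\in L^2_\sharp(Y_2)$; remove that hypothesis and the argument collapses into the product-of-two-measures issue that Remark~\ref{rem.Qcol} flags. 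Your assertion that the scheme ``should go through at the $BV_\sharp$ level, or by smoothly mollifying $\widetilde\mu$ and $\widetilde\nu$ and passing to the weak-$\ast$ limit'' is exactly the step the paper says is problematic: mollifying an invariant measure does not produce an invariant measure.

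Second, for case~(c) you correctly identify the endpoint property as the main obstacle, but the Poincar\'e--Bendixson/intermediate-value sketch you give is not substantiated, and the paper itself states in Remark~\ref{rem.Qincom} that it has not managed to reprove this step with ODE-specific tools and finds the original ergodic argument of \cite[Proposition~1.6]{FrMi} ``rather mysterious.'' Proposition~\ref{pro.Cb2d}~$({\rm IV})$ deliberately leaves open whether $0_{\R^2}$ is an endpoint, and the paper supplies only partial illuminations of that case (Proposition~\ref{pro.FMincom}, Theorem~\ref{thm.Ssgn}). So your proposal correctly locates the two hard points, but it claims both are surmountable by routine means when the paper's own content shows they are not, and the cited result must be invoked for the general statement.
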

\par
On the one hand, in Proposition~\ref{pro.FMcol} below we partly recover the collinearity result~\eqref{FMcol}, assuming the regularity of one of the two invariant probability measures through a quite different approach which is adapted to the more specific ODE's flow~\eqref{bX}.
On the other hand, in Proposition~\ref{pro.Cb2d} below we revisit the cases $(a)$, $(b)$, $(c)$ of Theorem~\ref{thm.FM}, and as a by-product we find the collinearity result \eqref{FMcol}, using essentially the Peirone result \cite[Theorem~3.1]{Pei} (widely later than \cite[Theorem~1.2]{FrMi}) combined with the Franks results \cite[Theorem~3.2]{Fra2} and \cite[Theorems~3.5]{Fra1} (earlier than \cite[Theorem~1.2]{FrMi}).
\begin{arem}\label{rem.Cbseg}
As a consequence of the collinearity property \eqref{FMcol} satisfied by Herman's rotation set~$\sfC_b$, there exists a non null vector $\xi\in\R^2$ such that the flow $X$ satisfies the unidirectional asymptotics
\beq\label{Xxi}
\forall\,x\in Y_2,\quad\lim_{t\to\infty}{X(t,x)\cdot\xi\over t}=0.
\eeq
Note that the scalar asymptotics \eqref{Xxi} holds true everywhere and not only almost-everywhere, unlike in Birkhoff's theorem.
\par\noindent
Indeed, if the rotation set $\sfC_b$ is the unit set $\{0_{\R^2}\}$, then from \eqref{Cbsindis} we deduce that
\[
\forall\,x\in Y_d,\quad \lim_{t\to\infty}{X(t,x)\over t}=0_{\R^2},
\]
which implies \eqref{Xxi} for any $\xi\in\R^2$.
Otherwise,  the rotation set $\sfC_b$ reads as $I\,\zeta$, where $I$ is a closed segment of $\R$ and $\zeta$ is a non null vector of $\R^2$.
Then, since by \eqref{rhob}, \eqref{rhobCb} $\sfC_b$ is the convex hull of the limit points of all the sequences $(X(t_n,x)/t_n)_{n\in\N}$ for any positive sequence $(t_n)_{n\in\N}$ converging to $\infty$ and any $x\in Y_2$, we get immediately the limits~\eqref{Xxi} with the non null vector $\xi:=R_\perp\zeta$ orthogonal to $\zeta$.
\end{arem}
\section{Various properties of the two-dimensional ODE's flow}
To motivate this section let us start by the two following examples which try to show the specificity of the two-dimensional ODE's flow compared to general continuous flows.
\subsection{Some preliminary examples}
\subsubsection{Specificity of the ODE's flow among continuous flows}\label{ss.speODE}
The first Example~\ref{exa.LM} below is based on the two-dimensional \cite[Example~2]{LlMa} in which the lift~$F$ satisfying \eqref{lift} has the whole square $[0,1]^2$ as a rotation set $\rho(F)$ (see \cite[(1.1)]{MiZi1} and \cite{MiZi2}).
However, we will show that this lift $F$ does not agree with any time-$1$ flow $X(1,\cdot)$ solution to~\eqref{bX}, which is consistent with the two-dimensional result~\eqref{FMcol} on continuous flows.
To this end, we will use the following result which provides a necessary condition for a lift $F$ for agreeing with $X(1,\cdot)$ for some flow $X$ solution to~\eqref{bX}.
\begin{apro}\label{pro.FX}
Let $F\in C^1(\R^d)^d$ be a lift satisfying \eqref{lift}.
Then, if there exists a flow $X$ \eqref{bX} associated with some vector field $b\in C^1_\sharp(Y_d)^d$ such that $F=X(1,\cdot)$ in $Y_d$, then the following relation holds
\beq\label{FbX}
(\nabla F)\,b=b\circ F\quad\mbox{in }Y_d.
\eeq
Moreover, we have
\beq\label{Fa}
\forall\,a\in \R^d\mbox{ \rm s.t. }F(a)-a\in\Z^d,\quad\det\,(\nabla F(a)-I_d)\neq 0\;\Rightarrow\;b(a)=0_{\R^d}.
\eeq
\end{apro}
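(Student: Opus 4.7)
The plan is first to establish \eqref{FbX} by differentiating an appropriate semi-group relation, and then to deduce \eqref{Fa} as a direct algebraic consequence of \eqref{FbX}. Throughout I would work with the canonical lift $\widetilde{X}(t,\cdot):\R^d\to\R^d$ of $X(t,\cdot)$, obtained by extending $b$ by $\Z^d$-periodicity and solving \eqref{bX} in $\R^d$; by uniqueness for ODEs with a periodic right-hand side this lift satisfies $\widetilde{X}(t,x+k)=\widetilde{X}(t,x)+k$ for every $(t,x,k)\in\R\times\R^d\times\Z^d$. The hypothesis $F=X(1,\cdot)$ in $Y_d$ then means that $F-\widetilde{X}(1,\cdot)$ is a continuous $\Z^d$-valued map on the connected set $\R^d$, hence a constant $k_0\in\Z^d$, so that in particular $\nabla F=\nabla\widetilde{X}(1,\cdot)$ on $\R^d$.

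The central intermediate step I would prove is the functional equation
\[
\widetilde{X}(t,F(x))=F\big(\widetilde{X}(t,x)\big)\quad\text{for every }(t,x)\in\R\times\R^d.
\]
This is a routine consequence of the semi-group property \eqref{sgX} applied with $s=1$, namely $\widetilde{X}(t+1,x)=\widetilde{X}(t,\widetilde{X}(1,x))=\widetilde{X}(1,\widetilde{X}(t,x))$, combined with the fact that integer translations commute with $\widetilde{X}(t,\cdot)$, which allows the constant $k_0$ to be absorbed on either side. Differentiating the functional equation in $t$ at $t=0$, the left-hand side produces $b(F(x))$ via the ODE \eqref{bX} with initial condition $F(x)$, while the chain rule on the right-hand side produces $\nabla F(x)\,\partial_t\widetilde{X}(0,x)=\nabla F(x)\,b(x)$. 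This is exactly \eqref{FbX}.

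For \eqref{Fa}, given $a\in\R^d$ with $F(a)-a\in\Z^d$, the $\Z^d$-periodicity of $b$ yields $b(F(a))=b(a)$, so evaluating \eqref{FbX} at $a$ gives $(\nabla F(a)-I_d)\,b(a)=0_{\R^d}$. If $\det(\nabla F(a)-I_d)\neq 0$, the matrix $\nabla F(a)-I_d$ is invertible and this forces $b(a)=0_{\R^d}$. The only mildly delicate point in the whole argument is the bookkeeping that identifies $F$ with $\widetilde{X}(1,\cdot)$ up to a constant integer translation, so that $\nabla F=\nabla\widetilde{X}(1,\cdot)$ holds on $\R^d$; once this is settled, both \eqref{FbX} and \eqref{Fa} follow from a single differentiation and a one-line algebraic manipulation, and I do not anticipate any substantive obstacle.
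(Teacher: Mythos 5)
Your argument is correct and is essentially the paper's own approach: both establish a conjugation-type functional equation between $F$ and the flow via the semi-group property \eqref{sgX}, differentiate in $t$ at $t=0$ to obtain \eqref{FbX}, and then deduce \eqref{Fa} by the same one-line periodicity-plus-invertibility argument. The only difference is cosmetic: the paper substitutes $x\mapsto X(t,x)$ directly in $F(x)=X(1,x)$ to get $F(X(t,x))=X(t+1,x)$, whereas you phrase the identity as the commutation $\widetilde{X}(t,F(x))=F(\widetilde{X}(t,x))$ of lifts and make explicit the possible constant integer translation $k_0$ between $F$ and $\widetilde{X}(1,\cdot)$, a bookkeeping point the paper leaves implicit.
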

\begin{proof}{ of Proposition~\ref{pro.FX}}
Let $F\in C^1(\R^2)^2$ satisfying \eqref{lift}, and let $X$ be a flow associated with a vector field $b\in C^1_\sharp(Y_d)^d$ by \eqref{bX}.
Replacing any point $x\in Y_d$ by $X(t,x)$ for any $t\in\R$, and taking the derivative with respect to $t$, we get that
\[
\ba{rrl}
& \forall\,x\in Y_d, & F(x)=X(1,x)
\\ \ecart
\Leftrightarrow & \forall\,t\in\R,\ \forall\,x\in Y_d, & F(X(t,x))=X(1,X(t,x))=X(t+1,x)
\\ \ecart
\Rightarrow & \forall\,t\in\R,\ \forall\,x\in Y_d, & \big(\nabla F(X(t,x))\big)\,b(X(t,x))=b(X(t+1,x))
\\ \ecart
(\mbox{with }t=0)\quad\Rightarrow & \forall\,x\in Y_d, & (\nabla F(x))\,b(x)=b(X(1,x))=b(F(x)),
\ea
\]
or equivalently \eqref{FbX}, which proves the first assertion.
\par
Now, assume that \eqref{FbX} holds, and let $a\in Y_d$ satisfying the left hand-side of the implication \eqref{Fa}.
Then, since $b(F(a))=b(a)$, we have immediately
\[
(\nabla F(a)-I_d)\,b(a)=0_{\R^d},
\]
which implies that $b(a)=0_{\R^d}$.
\end{proof}
\begin{Aexa}\label{exa.LM}
Let $\ph_1,\ph_2\in C^1_\sharp(Y_1)$ be two $1$-periodic functions on $\R$ such that
\beq\label{phi}
\ph_i(0)=0,\ \ph_i(1/2)=1\;\;\mbox{for }i\in\{1,2\}\quad\mbox{and}\quad \ph_1'(0)\,\ph_2'(1/2)\neq 0,
\eeq
and let $F\in C^1(\R^2)^2$ be the lift of \cite[Example~2]{LlMa} defined by
\beq\label{FLM}
F(x):=x+\big(\ph_2(x_2+\ph_1(x_1)),\ph_1(x_1)\big)\quad \mbox{for }x=(x_1,x_2)\in\R^2,
\eeq
which, due to $F-{\rm Id}\in C^1_\sharp(Y_2)^2$, performs a $C^1$-diffeomorphism on $Y_2$, whose jacobian determinant is equal to $1$.
Let us check that lift $F$ cannot be represented by $F=X(1,\cdot)$ for any flow $X$ solution to ODE~\eqref{bX}.
\par
Set $a:=(0,1/2)$.
First of all, by \eqref{FLM} and \eqref{phi} we have
\[
F(a)-a=e_1\in\Z^2\quad\mbox{and}\quad \nabla F(a)-I_d=\begin{pmatrix}\ph_1'(0)\,\ph_2'(1/2) & \ph_2'(1/2) \\ \ecart \ph_1'(0) & 0\end{pmatrix},
\]
which due to $\ph_1'(0)\,\ph_2'(1/2)\neq 0$ and \eqref{Fa}, implies that $b(a)=0_{\R^2}$.
\par
Now, assume that $F=X(1,\cdot)$ for some flow $X$ associated with $b\in C^1_\sharp(Y_d)^d$ by \eqref{bX}.
Let $\mu\in\cI_b$ be an invariant probability measure on $Y_2$ for the flow $X$.
By Fubini's theorem we have
\beq\label{muFmub}
\ba{ll}
\mu(F-{\rm Id}) & \dis =\int_{Y_2}(F(x)-x)\,d\mu(x)=\int_{Y_2}(X(1,x)-x)\,d\mu(x)
\\ \ecart
& \dis =\int_{Y_2}\kern -.2em\left(\int_0^1 b(X(t,x))\,dt\right)d\mu(x)=\int_0^1\left(\int_{Y_2}b(X(t,x))\,d\mu(x)\right)dt
\\ \ecart
& \dis =\int_0^1\mu(b)\,dt=\mu(b).
\ea
\eeq
This combined with \cite[Definition~$(1.1)$, Theorem~3.4~$(b)$]{MiZi1}, \cite{MiZi2} (in which \cite[Example~2]{LlMa} is revisited) and the equality from \eqref{rhobCb}, yields
\[
\sfC_b=\big\{\mu(b):\mu\in\cI_b\big\}={\rm conv}\,(\rho(b))=\rho(b)=\big\{\mu(F-{\rm Id}):\mu\in\cI_f\big\}=\rho(F)=[0,1]^2,
\]
where $\cI_f$ is the set composed of the probability measures on $Y_2$, which are invariant by the map~$f$ of which $F$ is a lift, {\em i.e.} $f\circ\Pi=\Pi\circ F$ (note that $\cI_b\subset\cI_f$).
However, since $b(a)$ is the null vector, the Dirac measure $\delta_a$ belongs to $\cI_b$.
Hence, applying \eqref{muFmub} with $\mu=\delta_a$, we deduce that
\[
0_{\R^2}=b(a)=\delta_a(b)=\delta_a(F-{\rm Id})=F(a)-a=e_1,
\]
which gives a contradiction.
\end{Aexa}
\subsubsection{Illustration of the collinearity property of the ODE's flow}\label{ss.exacol}
The following example provides a class of two-dimensional vector fields $b$ whose each associated flow admits an explicit infinite family of invariant probability measures satisfying the collinearity property~\eqref{FMcol}.
\begin{Aexa}\label{exa.pre}
Let $b$ be a vector field such that $b=\rho\,R_\perp\nabla u$, where $\rho$ is a non negative function in $C^1_\sharp(Y_2)$ with $\sigma:=1/\rho\in L^1_\sharp(Y_2)$, and where $\nabla u\in C^1_\sharp(Y_2)^2$ with $\overline{\nabla u}\in\Z^2$.
Let $\theta$ be a positive function in $C^0_\sharp(Y_1)$, and let $\Theta$ be its primitive satisfying $\Theta(0)=0$.
\par
First, note that the function $\theta(u):=\theta\circ u$ is $\Z^2$-periodic, since the $\Z^2$-periodicity of $\nabla u$ with $\overline{\nabla u}\in\Z^2$ and the $1$-periodicity of $\theta$ yield
\[
\forall\,k\in\Z^2,\ \forall\,x\in Y_2,\quad \theta(u)(x+k)=\theta\big(u(x)+\overline{\nabla u}\cdot k\big)=\theta(u(x)).
\]
Hence, we deduce that
\[
\ba{lll}
\forall\,k\in\Z^2,\ \forall\,x\in Y_2, & \dis \Theta(u)(x+k) & =\Theta(u(x+k))=\Theta(u(x)+\overline{\nabla u}\cdot k)
\\ \ecart
& & \dis =\int_0^{u(x)+\overline{\nabla u}\cdot k}\theta(t)\,dt
\\ \ecart
& & \dis = \int_0^{\overline{\nabla u}\cdot k}\theta(t)\,dt+\int_{\overline{\nabla u}\cdot k}^{u(x)+\overline{\nabla u}\cdot k}\theta(t)\,dt
\\ \ecart
& & \dis =\bar{\theta}\;\overline{\nabla u}\cdot k+\Theta(u)(x),
\ea
\]
which implies that
\beq\label{Theta}
\left(x\mapsto\Theta(u)(x)-\bar{\theta}\;\overline{\nabla u}\cdot x\right)\in C^1_\sharp(Y_2).
\eeq
Next, by virtue of the divergence-curl result of Proposition~\ref{pro.divcurl} the $\Z^2$-periodic vector-valued function
\beq\label{sithu}
\sigma\,\theta(u)\,b=\sigma\,\rho\,\theta(u)\,R_\perp\nabla u=\theta(u)\,R_\perp\nabla u=R_\perp\nabla(\Theta(u))
\eeq
is divergence free in $\R^2$, which again by Proposition~\ref{pro.divcurl} implies that the probability measure on~$Y_2$  defined by (recall that the function $\sigma\,\theta(u)$ is positive)
\[
d\mu_\theta(x)={\sigma(x)\,\theta(u(x))\over\overline{\sigma\,\theta(u)}}\,dx
\]
is invariant for the flow \eqref{bX} associated with the vector field $b$.
\par\noindent
Therefore, we obtain an infinite family of invariant probability measures $\mu_\theta$ for positive functions $\theta\in C^0_\sharp(Y_1)$, and by \eqref{Theta}, \eqref{sithu} we have
\beq\label{muthb}
\mu_\theta(b)={1\over\overline{\sigma\,\theta(u)}}\,\int_{Y_2}\sigma(x)\,\theta(u(x))\,b(x)\,dx
={1\over\overline{\sigma\,\theta(u)}}\,R_\perp\overline{\nabla\Theta(u)}
={\bar{\theta}\over\overline{\sigma\,\theta(u)}}\,R_\perp\overline{\nabla u}.
\eeq
Hence, all the vectors $\mu_\theta(b)\in\sfC_b$ are parallel to the vector $R_\perp\overline{\nabla u}$ according to assertion~\eqref{FMcol}.
\end{Aexa}
\subsection{The Franks-Misiurewicz result revisited}\label{ss.revFM}
\subsubsection{A partial proof of the collinearity result}\label{ss.FMcol}
In this section we give a partial proof of the collinearity property \eqref{FMcol} based on the decomposition~\eqref{bmuusi} (the proof of which is given in Appendix~\ref{app.rep}) and the divergence-curl result~\eqref{dmub=0}.
Although this first proof is incomplete due to an additional hypothesis, it provides an approach which is both simple and quite different from the purely ergodic approach of \cite{FrMi}.
\begin{apro}\label{pro.FMcol}
Let $b$ a two-dimensional vector field in $C^1_\sharp(Y_2)^2$.
Assume that there exists an invariant probability measure $\mu\in\cI_b$ for the flow $X$ \eqref{bX} satisfying
\beq\label{bmusi}
|b(x)|\,d\mu(x)=\sigma(x)\,dx\;\;\mbox{with}\;\;\sigma\in L^2_\sharp(Y_2)\quad\mbox{and}\quad \mu(b)\neq 0_{\R^2}.
\eeq
Then, the collinearity result \eqref{FMcol} holds.
\end{apro}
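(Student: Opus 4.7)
The plan is to exploit the stream-function decomposition~\eqref{bmuusi} for $b\mu$ together with the divergence-free property of $b\nu$ (Proposition~\ref{pro.divcurl}), by evaluating a single integral in two different ways.

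First I observe that the hypothesis $|b|\,d\mu = \sigma\,dx$ with $\sigma\in L^2_\sharp(Y_2)$ forces $b\mu$ to be absolutely continuous with respect to Lebesgue measure with density $B\in L^2_\sharp(Y_2)^2$; the canonical representative $B(x) = \sigma(x)\,b(x)/|b(x)|$ on $\{b\neq 0\}$ (and $0$ elsewhere) is pointwise parallel to $b$, so $\det(b(x),B(x)) = 0$ for every $x\in Y_2$. Combined with~\eqref{bmuusi}, this yields $\nabla^\perp u^\sharp = B - \mu(b)\in L^2_\sharp(Y_2)^2$, thereby upgrading $u^\sharp$ from $BV_\sharp(Y_2)$ to $H^1_\sharp(Y_2)$.

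Setting $c := \mu(b)$ and $d := \nu(b)$, I would compute the integral $I := \int_{Y_2} b\cdot\nabla u^\sharp\,d\nu$ in two ways. The decomposition $\nabla^\perp u^\sharp = B - c$ combined with $R_\perp^2 = -I_2$ gives $\nabla u^\sharp = R_\perp c - R_\perp B$ in $L^2$; using the pointwise identity $b\cdot R_\perp B = \det(b,B) = 0$, this yields $b\cdot \nabla u^\sharp = \det(b,c)$ pointwise, so
\[
I = \int_{Y_2}\det(b,c)\,d\nu = \det(\nu(b),c) = -\det(c,d).
\]
The second evaluation rewrites $I = \int\nabla u^\sharp\cdot d(b\nu)$ and uses the dual decomposition $b\nu = \nu(b) + \nabla^\perp v^\sharp$ from~\eqref{bmuusi} (with $v^\sharp\in BV_\sharp(Y_2) \subset L^2_\sharp(Y_2)$ by the 2D Sobolev embedding): the constant part contributes $d\cdot\int\nabla u^\sharp\,dx = 0$, while the fluctuation part $\int\nabla u^\sharp\cdot d(\nabla^\perp v^\sharp)$ also vanishes, by the distributional identity $\div(\nabla^\perp v^\sharp) = 0$ combined with integration by parts; hence $I = 0$. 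Equating the two expressions of $I$ gives $\det(c,d) = 0$, which is~\eqref{FMcol}.

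The main obstacle lies in the rigorous justification of the second evaluation, specifically the vanishing $\int\nabla u^\sharp\cdot d(\nabla^\perp v^\sharp) = 0$ for $u^\sharp\in H^1_\sharp(Y_2)$ and $v^\sharp\in BV_\sharp(Y_2)$. This delicate duality between an $L^2$ vector field and a bounded Radon measure is best handled by mollifying $v^\sharp$ into smooth $v_n^\sharp$, applying the smooth-case identity $\int\nabla u^\sharp\cdot\nabla^\perp v_n^\sharp\,dx = -\int u^\sharp\,\div(\nabla^\perp v_n^\sharp)\,dx = 0$, and passing to the limit using $v^\sharp \in L^2_\sharp(Y_2)$ together with the $H^1$--$H^{-1}$ duality. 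This is precisely where the $L^2$-regularity of $\sigma$ becomes essential (it upgrades $u^\sharp$ from $BV$ to $H^1$, making the integration by parts meaningful), and it is the reason the argument remains only ``partial'' and does not reach the full generality of the Franks--Misiurewicz theorem.
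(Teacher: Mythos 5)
The overall architecture you propose is sound and is, in spirit, the one the paper uses: mollify the $\nu$-side, pair a curl-free field against the divergence-free vector measure $b\nu$, and exploit the $L^2$-regularity coming from $|b|\,d\mu=\sigma\,dx$. Your first evaluation is also correct: with the specific representative $\nabla u^\sharp = R_\perp(\mu(b)-B)$, $B=\sigma\,b/|b|$ on $\{b\neq 0_{\R^2}\}$, one has $b\cdot\nabla u^\sharp = \det(b,\mu(b))$ \emph{everywhere}, so $I:=\int_{Y_2}b\cdot\nabla u^\sharp\,d\nu=\det(\nu(b),\mu(b))$ is a well-defined integral of a continuous integrand.

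The gap is in the limit passage of the second evaluation. You correctly get $\int_{Y_2}\nabla u^\sharp\cdot R_\perp\nabla v_n^\sharp\,dx = 0$ for the mollified $v_n^\sharp$, but the convergence of this quantity to $I$ does not follow from ``$v^\sharp\in L^2_\sharp(Y_2)$ together with $H^1$--$H^{-1}$ duality.'' That duality would require $\nabla u^\sharp\in H^1_\sharp(Y_2)^2$ (i.e.\ $u^\sharp\in H^2_\sharp(Y_2)$), equivalently some control on $\nabla\sigma$; under \eqref{bmusi} you only have $\nabla u^\sharp\in L^2_\sharp(Y_2)^2$. And since $\nabla^\perp v^\sharp$ is merely a Radon measure (not in $L^2$), $\nabla^\perp v_n^\sharp$ does not converge weakly in $L^2$, so the $L^2$--$L^2$ pairing with $\nabla u^\sharp$ does not pass to the limit either. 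The correct way to close this, and the point of the paper's Lemma~\ref{lem.rhon}, is to push the mollifier onto the $\mu$-side by Fubini: $\int\nabla u^\sharp\cdot R_\perp\nabla v_n^\sharp\,dx = \int(\rho_n*\nabla u^\sharp)\cdot b\,d\nu = \det(\nu(b),\mu(b)) - \int_{Y_2}\det\,(b,\rho_n*B)\,d\nu$, then use the pointwise identity $\det(b(y),B(y))=0$, the mean value theorem on $b\in C^1_\sharp(Y_2)^2$, and $|B|=\sigma\in L^2_\sharp(Y_2)$ to bound $|\det(b,\rho_n*B)|\leq \|\nabla b\|_\infty\,(\rho_n d_{Y_2})*\sigma$, which tends to $0$ uniformly. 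That uniform estimate is precisely what your $H^1$--$H^{-1}$ claim cannot produce, and it is exactly where the $L^2$-hypothesis on $\sigma$ is genuinely used. Once this replacement is made, your argument coincides with the paper's proof.
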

\begin{proof}{ of Proposition~\ref{pro.FMcol}}
Let $\mu\in\cI_b$ be an invariant probability measure satisfying \eqref{bmusi}, and let $\nu\in\cI_b$ be any  invariant probability measure for the flow \eqref{bX}.
We will use the following result the proof of which is standard.
\begin{alem}\label{lem.rhon}
Let $\theta$ be a non negative even function in $C^\infty_c(\R^2)$, whose support is contained in the ball $B_{1/2}$ of $\R^2$, centered on $0_{\R^2}$ and of radius $1/2$, and such that $\theta=1$ in the ball~$B_{1/4}$.
Let $\theta_n$ be the function defined by $\theta_n(x):=n/\pi\,e^{-n|x|^2}$ for $x\in\R^2$, $n\in\N$.
Consider the even mollifier function $\rho_n\in C^\infty_\sharp(Y_2)$ defined by the $\Z^2$-periodized function
\[
\rho_n:=\big[\,\theta\,\theta_n\,/\,\overline{\theta\,\theta_n}\,\big]_\sharp\quad\mbox{for }n\in\N.
\]
Then, for any function $f\in L^2_\sharp(Y_2)$, the sequence $\big((d_{Y_2}\,\rho_n)*f\big)_{n\in\N}$ converges uniformly to $0$ in~$Y_2$ (recall the definition \eqref{dY2} of $d_{Y_2}$).
\end{alem}
By the decomposition~\eqref{bmuusi} we have
\beq\label{vns}
\rho_n*(b\,\nu)=\nu(b)+R_\perp \nabla v_n^\sharp\quad\mbox{pointwise in }Y_2,\quad\mbox{with}\quad v^\sharp_n:=\rho_n*v^\sharp.
\eeq
(note that $\overline{\rho_n*(b\,\nu)}=\nu(b)$ due to $\overline{\rho_n}=1$).
Then, using successively the divergence-curl equality~\eqref{dmub=0}, decomposition \eqref{vns} and Fubini's theorem, we get that
\beq\label{rnbmunu}
\ba{ll}
0 & \dis =\int_{Y_2}\nabla v^\sharp_n(x)\cdot b(x)\,\mu(dx)=\int_{Y_2}R_\perp\big(\nu(b)-\rho_n*(b\,\nu)(x)\big)\cdot b(x)\,\mu(dx)
\\ \ecart
& \dis = \det\,(\mu(b),\nu(b))+\int_{Y_2}\kern-.2em\left(\int_{Y_2}\rho_n(x-y)\,R_\perp(b(x)-b(y))\cdot b(x)\,d\nu(y)\right)\mu(dx)
\\ \ecart
& \dis = \det\,(\mu(b),\nu(b))+\int_{Y_2}\kern-.2em\left(\int_{Y_2}\rho_n(y-x)\,R_\perp(b(x)-b(y))\cdot b(x)\,\mu(dx)\right)d\nu(y).
\ea
\eeq
Moreover, applying the mean value theorem to $b$ and using assumption \eqref{bmusi}, we have
\[
\forall\,y\in Y_2,\quad
\left|\,\int_{Y_2}\rho_n(y-x)\,R_\perp(b(x)-b(y))\cdot b(x)\,\mu(dx)\,\right|\leq\|\nabla b\|_{L^\infty(Y_2)^{2\times 2}}\big((\rho_n\,d_{Y_2})*\sigma\big)(y),
\]
which, by virtue of Lemma~\ref{lem.rhon} with $f=\sigma$, converges uniformly to $0$ with respect to $y$ in $Y_2$.
This combined with~\eqref{rnbmunu} implies that $\det\,(\mu(b),\nu(b))=0$ for any $\nu\in\cI_b$.
Finally, from the assumption $\mu(b)\neq 0_{\R^2}$, we deduce the desired collinearity \eqref{FMcol}. 
\end{proof}
\begin{arem}\label{rem.Qcol}
In the above proof of Proposition~\ref{pro.FMcol}, we need one of the two invariant measures to be regular. Otherwise, we have to deal with a delicate problem of product of two measures. Indeed, it is not evident that the regularization of an invariant measure for the flow $X$ provides an invariant measure for $X$. For instance, the regularization by convolution is not effective, since the regularized measure of an invariant measure is not invariant in general.
Hence, the following question occurs naturally:
\beq\label{Qcol}
\mbox{\rm How to prove property~\eqref{FMcol} in the ODE's framework with specific ODE's tools ?}
\eeq
\end{arem}
\subsubsection{Revisiting the Franks-Misiurewicz theorem with the ODE's flow}
First of all, note that if the flow $X$ has an infinite periodic compact orbit $X(\R,x_0)$ in the torus~$Y_2$ for some $x_0\in Y_2$, {\em i.e.} satisfying
\beq\label{infperorb}
\exists\,T>0,\ \exists\,k\in\Z^2\setminus\{0_{\R^2}\},\quad X(T,x_0)=x_0+k,
\eeq
then (recalling Remark~\ref{rem.X1}) we have
\beq\label{kT}
\lim_{t\to\infty}{X(t,x_0)\over t}={k\over T} \in\rho(b)=\sfC_b,
\eeq
since
\[
\forall\,n\in\N,\;\;X(n\,T,x_0)=x_0+n\,k\quad\mbox{and}\quad\lim_{n\to\infty}{X(n\,T,x_0)\over n\,T}={k\over T}.
\]
Note that $k/T$ is then a non null and commensurable vector of $\sfC_b$.
\par\bigskip
Actually, the Franks-Misiurewicz Theorem~\ref{thm.FM} is more accurate in the ODE's context thanks to the following result.
\begin{apro}\label{pro.Cb2d}
Let $b$ be a two-dimensional vector field in $C^1_\sharp(Y_2)^2$.
Then, the Herman rotation set $\sfC_b$ \eqref{Cb} satisfies the following two-by-two disjoint cases:
\begin{itemize}
\item[$({\rm I})$] \vskip -.2cm
$\sfC_b = I \,\zeta$, where $I$ is a closed segment of $\R$ not reduced to a single point, and $\zeta$ is a commensurable vector in $\R^2$.
In particular, this holds if the flow $X$ \eqref{bX} has two infinite periodic compact orbits in $Y_2$ with distinct asymptotics.

\item[$({\rm II})$] \vskip -.2cm
$\sfC_b = \{\zeta\}$, where $\zeta$ is a commensurable vector in $\R^2\setminus\{0_{\R^2}\}$.
In this case, the vector field~$b$ does not vanish in $Y_2$, the flow $X$ has at least one infinite periodic compact orbit in~$Y_2$, and all the infinite periodic orbits have $\zeta$ as asymptotics.

\item[$({\rm III})$] \vskip -.2cm
$\sfC_b = \{\zeta\}$, where $\zeta$ is an incommensurable vector of $\R^2$, if, and only if, $b$ does not vanish in~$Y_2$ and the flow $X$ has no periodic compact orbit in~$Y_2$.
For the {\em if}, we assume in addition that $b\in C^3_\sharp(Y_2)^2$.

\item[$({\rm IV})$] \vskip -.2cm
$\sfC_b$ is either $\{0_{\R^2}\}$, or $\sfC_b = I\,\zeta$, where $I$ is a closed segment of $\R$ such that $0\in I\neq\{0\}$ and $\zeta$ is an incommensurable vector in $\R^2$.
In this case, the vector field $b$ does vanish in $Y_2$, and the flow $X$ has no periodic compact orbit in $Y_2$.
\end{itemize}
\end{apro}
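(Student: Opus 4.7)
The starting point is Theorem~\ref{thm.FM}, which guarantees that $\sfC_b$ is a closed line segment contained in a line through $0_{\R^2}$. When $\sfC_b\neq\{0_{\R^2}\}$, it has a well-defined direction $\zeta\in\R^2$, which is either commensurable or incommensurable, and by part~$(c)$ of Theorem~\ref{thm.FM} the incommensurable segment case automatically places $0_{\R^2}$ at one end, so that $0\in I$ in (IV). The four cases (I)--(IV) thus correspond to the two binary alternatives segment-versus-singleton and commensurable-versus-incommensurable direction, and are pairwise disjoint. The plan is to match each case with the two ``ODE-side'' binary conditions ``$b$ vanishes somewhere in $Y_2$'' and ``the flow admits an infinite periodic compact orbit'', using as main tools the implication \eqref{kT} (periodic orbits produce commensurable elements of $\sfC_b$), Peirone's pointwise convergence theorem \cite[Theorem~3.1]{Pei} (valid when $b$ does not vanish), and the Franks theorems \cite[Theorem~3.2]{Fra2}, \cite[Theorem~3.5]{Fra1} (commensurable points of $\sfC_b$ are realised by periodic orbits).

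\textbf{Cases (I) and (II).} For case~(I), two infinite periodic orbits with distinct asymptotics will produce, via \eqref{kT}, two distinct commensurable elements $k_1/T_1,k_2/T_2\in\sfC_b$; convexity of $\sfC_b$ then yields a non-degenerate closed segment in a commensurable direction. For case~(II), I first observe that $\sfC_b=\{\zeta\}$ with $\zeta\neq 0_{\R^2}$ forces $b$ not to vanish in $Y_2$: otherwise the Dirac mass $\delta_a$ at a zero $a$ of $b$ would belong to $\cI_b$ with $\delta_a(b)=0_{\R^2}\in\sfC_b$, a contradiction. Peirone's theorem then gives the pointwise convergence $X(t,x)/t\to\zeta$ for every $x\in Y_2$. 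Since $\zeta$ is a commensurable point of $\sfC_b$, I invoke Franks' theorem to produce at least one infinite periodic orbit, and by \eqref{kT} the asymptotic of any such orbit must coincide with the unique element $\zeta$ of $\sfC_b$.

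\textbf{Cases (III), (IV), and the main obstacle.} For the ``only if'' part of case~(III), a zero $a$ of $b$ or a periodic orbit would force a commensurable element (namely $\delta_a(b)=0_{\R^2}$ or $k/T$) to lie in $\sfC_b=\{\zeta\}$, contradicting incommensurability of $\zeta$. For the ``if'' direction, $b$ being non-vanishing gives, via Peirone's theorem, the existence of a limit of $X(t,x)/t$ at every $x$; to upgrade this to a single incommensurable value of $\zeta$, the plan is to combine Poincar\'e-Bendixson theory (see \cite[Chapter~VII]{Har}) with Siegel's curve theorem~\cite{Sie} to obtain a global cross-section and conjugate the flow to the suspension of a $C^3$-diffeomorphism of the circle without periodic points, and then apply the Denjoy theorem (precisely where the hypothesis $b\in C^3_\sharp(Y_2)^2$ is used) to deduce minimality and uniqueness of an incommensurable rotation vector. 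Case~(IV) will follow by elimination: the incommensurability of $\zeta$ together with \eqref{kT} forbids any periodic orbit, and if $b$ did not vanish, the ``if'' direction of (III) just proved would place us in case (III), contradicting the segment character assumed in (IV). The hard step of the whole argument is precisely this ``if'' direction of case (III): turning Peirone's pointwise limit into a single, globally defined, incommensurable asymptotic vector, which is the sole reason for needing the $C^3$ regularity of $b$.
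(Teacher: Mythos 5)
Your overall plan is sound and the case (I) and (IV) arguments are essentially correct, but your route differs from the paper's in two substantive ways and there is one step that, as written, is a genuine gap.

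First, you take Theorem~\ref{thm.FM} as the starting point, so collinearity of $\sfC_b$ is assumed from the outset. The paper deliberately does \emph{not} use Theorem~\ref{thm.FM} in the proof of Proposition~\ref{pro.Cb2d}; instead it re-derives the collinearity as a by-product (in the proof of case (IV), via \cite[Theorem~3.2]{Fra2}: if $\sfC_b$ contained two non-collinear vectors it would contain the interior of a triangle, hence two rational interior points realised by periodic orbits with distinct asymptotics, hence $\sfC_b$ would be in case (I) after all). Your approach is logically admissible but misses the paper's stated objective of an ODE-intrinsic proof that recovers \eqref{FMcol} independently of \cite{FrMi}; it also explains why you do not need \cite[Theorem~3.2]{Fra2} in your treatment of (IV), whereas the paper does.

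Second, for the ``if'' direction of (III) — upgrading Peirone's singleton $\sfC_b=\{\zeta\}$ to incommensurability of $\zeta$ — you propose Poincar\'e--Bendixson + Siegel's curve theorem + Denjoy's theorem to get a minimal circle diffeomorphism with irrational rotation number. The paper instead uses Akutowicz's conjugation result \cite[Section~3]{Aku} (this is where the $C^3$ hypothesis is actually consumed, producing a $C^2$ change of variables with $\widehat b_1>0$), followed by a time reparametrisation to reach a vector field of the form $(1,\widehat b_2/\widehat b_1)$ and Hartman's formula \cite[Exercise~14.3]{Har} linking the asymptotics to the Poincar\'e rotation number, which is then irrational by \cite[Theorem~14.1]{Har} since there is no compact orbit. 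Your reduction is the same in spirit (everything ultimately rests on circle-map theory) but you attribute the $C^3$ requirement to Denjoy, whereas Denjoy only needs $C^2$ of the return map; the $C^3$ is spent earlier, in the flow-to-circle-map conjugation, exactly as in the paper's use of Akutowicz.

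The genuine gap is in your treatment of case (II). You justify the existence of a periodic orbit by ``Franks' theorem (commensurable points of $\sfC_b$ are realised by periodic orbits)''. Neither of the two Franks results you list supports this as stated: \cite[Theorem~3.2]{Fra2} requires the rational point to lie in the \emph{interior} of the rotation set, which is empty when $\sfC_b$ is a singleton; and \cite[Theorem~3.5]{Fra1} concerns ergodic invariant measures with null (or rational) rotation vector and would require you to pass through the representation \eqref{CbEb} and address the flow-versus-time-one-map ergodicity issue, none of which you do. The paper avoids this entirely by deriving the periodic orbit in (II) from the contrapositive of the ``if'' direction of (III): since $b$ is non-vanishing and $\sfC_b$ is a singleton with \emph{commensurable} $\zeta$, the flow cannot fall under (III), so it must have a periodic compact orbit. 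You should either adopt that elimination argument, or spell out a correct invocation of a Franks-type theorem applied to an ergodic measure $\nu\in\cE_b$ with $\nu(b)=\zeta$.
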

\begin{arem}\label{rem.diff}
Let $\Psi\in C^2(\R^2)^2$ be a $C^2$-diffeomorphism on the torus $Y_2$, {\em i.e.}
\beq\label{psiApsid}
\Psi(y)=A\,y+\Psi_\sharp(y)\quad\mbox{for }y\in \R^2,
\eeq
where $A\in\Z^{2\times 2}$ with $\det(A)=\pm 1$ and $\Psi_\sharp\in C^2_\sharp(Y_2)^2$. Then, by virtue of \cite[Remark~2.1]{BrHe1} we have
\beq\label{asyhX}
\forall\,x\in Y_2,\quad\lim_{t\to\infty}{\widehat{X}(t,x)\over t}\mbox{ exists}\;\;\Leftrightarrow\;\;\lim_{t\to\infty}{X(t,\Psi^{-1}(x))\over t}\mbox{ exists},
\eeq
where $\widehat{X}$ is the ODE's flow defined by
\beq\label{hX}
\widehat{X}(t,x):=\Psi\big(X(t,\Psi^{-1}(x))\big)\quad\mbox{for }(t,x)\in \R\times Y_2,
\eeq
associated with the vector field $\widehat{b}$ defined by
\beq\label{hb}
\widehat{b}(x):=\nabla\Psi(\Psi^{-1}(x))\,b(\Psi^{-1}(x))\quad\mbox{for }x\in Y_2.
\eeq
Moreover, the equivalence~\eqref{asyhX} combined with equality $\sfC_b={\rm conv}\,(\rho(b))$ (recall \eqref{rhob} and \eqref{rhobCb}) implies that
\beq\label{ChbCb}
\sfC_{\widehat{b}}=A\,\sfC_b.
\eeq
Hence, since matrices $A,A^{-1}$ map any integer vector to an integer vector, it is easy to check that each of the four cases of Proposition~\ref{pro.Cb2d} is stable under the change of flow \eqref{hX}. 
\end{arem}
\begin{arem}\label{rem.Qincom}
In view of the incommensurable case $(c)$ of Theorem~\ref{thm.FM} (see Section~\ref{ss.conj})), the only missing result in Proposition~\ref{pro.Cb2d} is that in case $({\rm IV})$ the null vector $0_{\R^2}$ is also an end point of the closed line segment $\sfC_b$.
Actually, the Franks-Misiurewicz incommensurable case, in particular the end of the proof of their \cite[Proposition~1.6]{FrMi}, remains rather mysterious for non experts in ergodic theory.
Indeed, an algebraic condition, {\em i.e.} the existence of an incommensurable vector $\nu(b)$ in any rotation set~$\sfC_b$ not reduced to a single point, does imply the non negativity result
\beq\label{FMnpos}
0_{\R^2}\in \sfC_b\quad\mbox{and}\quad\forall\,\mu(b)\in\sfC_b,\;\; \mu(b)\cdot\nu(b)\geq 0,
\eeq
which is an equivalent way to regard the statement $(c)$ of Theorem~\ref{thm.FM}.
On the other hand, similarly to \eqref{Qcol} one can ask the natural question:
\beq\label{Qincom}
\ba{c}
\mbox{\rm How to prove property~\eqref{FMnpos} in the ODE's framework with specific ODE's tools ?}
\ea
\eeq
We have not managed to answer this question.
However, in Section~\ref{ss.morerotset} and in Section~\ref{ss.Sflow} we present various results and instances which partially explains property~\eqref{FMnpos}, together with the alternative of case $({\rm IV})$ (see Remark~\ref{rem.incom}) and the fact that the case $\sfC_{b}\neq \{0_{\R^2}\}$ in $({\rm IV})$ imposes some sharp restriction on the regularity of the vector field $b$ (see Remark~\ref{rem.Sinc}).
\end{arem}
\begin{arem}\label{rem.Cb2d}
First, note that the collinearity property \eqref{FMcol} is a by-product of Proposition~\ref{pro.Cb2d}.
Classically in ergodic theory (see, {\em e.g.}, the scheme of the proof of \cite[Theorem~3.1]{Tas}) the cases $({\rm I})$-$({\rm II})$ and the case $({\rm III})$ of Proposition~\ref{pro.Cb2d}, when the vector field $b$ does not vanish in $Y_2$, are equivalent respectively to:
\begin{itemize}
\item \vskip-.3cm
the existence of an infinite periodic compact orbit of the flow $X$ in $Y_2$ according to \eqref{infperorb},
\item \vskip-.3cm
the non existence of a periodic compact orbit of the flow $X$ in $Y_2$.
\end{itemize}
\vskip-.3cm
Or equivalently (using {\em e.g.} \cite[Theorem~14.1]{Har}), in terms of the so-called rotation number $\alpha$ of the flow $X$ (see, {\em e.g.}, \cite[Lemma~13.1]{Har}) the alternative can be written respectively as:
\begin{itemize}
\item \vskip-.3cm
$\alpha$ is rational,
\item \vskip-.3cm
$\alpha$ is irrational.
\end{itemize}
Moreover, in case $({\rm III})$ by \cite[Section~4]{Str1} the flow $X$ is uniquely ergodic, and by \cite[Section~5]{Oxt1} the limit of $X(t,x)/t$ as $t\to\infty$ to $\zeta$ is uniform with respect to $x\in Y_2$.
\par
In the proof below, we will combine these two approaches. While in the ergodic literature the use of the rotation number is quite classical, the use of the asymptotics of the flow is more unusual. In view of the homogenization of the transport equation, Tassa \cite[Theorems~4.2,~4.5]{Tas} distinguished two asymptotic behaviors based on the rationality or not of the rotation number, but assuming the existence of an invariant probability measure for the flow with a regular density.
More generally, Peirone \cite[Theorem~3.1]{Tas} specified the asymptotics of the flow under the sole assumption that the vector field $b$ is non vanishing in $Y_2$, but he did not study the commensurability of the asymptotics.
Actually, the link between the asymptotics of the flow and the rotation number is not immediate.
\end{arem}
\par\noindent
\begin{proof}{ of Proposition~\ref{pro.Cb2d}}
Let us first prove the following result which is based on the first case of the proof of \cite[Theorem~3.1]{Pei}.
\begin{alem} \label{lem-per}
Assume that the flow $X$ \eqref{bX} has an infinite periodic compact orbit $X(\R,x_0)$ in $Y_2$ for some $x_0\in Y_2$, {\em i.e.} satisfying~\eqref{infperorb}. Then, the rotation set $\sfC_b$ contains the non null vector $\zeta := k/T$, and $\sfC_b=I\,\zeta$ for some closed segment $I$ of $\R$ with $I\neq \{0\}$.   
\end{alem}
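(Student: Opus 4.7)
The plan is to exhibit an invariant probability measure supported on the given periodic orbit whose $b$-average equals $\zeta=k/T$, and then to invoke the Franks--Misiurewicz collinearity statement (Theorem~\ref{thm.FM}) to organize $\sfC_b$ as a closed segment of the line $\R\zeta$.

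First I would introduce the orbital mean $\mu_0\in\cM_p(Y_2)$ defined by
\[
\mu_0(\varphi):=\frac{1}{T}\int_0^T\varphi\bigl(X(t,x_0)\bigr)\,dt\quad\mbox{for }\varphi\in C^0_\sharp(Y_2),
\]
which is a probability measure supported on the compact periodic orbit of $x_0$ in $Y_2$. The step that deserves most attention is the verification of invariance $\mu_0\in\cI_b$: using the semigroup property~\eqref{sgX} and the change of variable $u=s+t$, the quantity $\mu_0(\varphi\circ X(s,\cdot))$ becomes the mean value of $u\mapsto\varphi(X(u,x_0))$ over the interval $[s,s+T]$, and the task reduces to showing that this map is $T$-periodic. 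This is where the two $\Z^2$-periodicities mesh: the $\Z^2$-periodicity of $b$ yields, via uniqueness for the ODE~\eqref{bX}, the covariance $X(u,y+\ell)=X(u,y)+\ell$ for every $\ell\in\Z^2$, whence $X(u+T,x_0)=X(u,x_0+k)=X(u,x_0)+k$; combined with the $\Z^2$-periodicity of $\varphi$, this closes the loop. I expect this interaction between the lifted flow and torus-periodic test functions to be the only non-routine point in the argument.

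Once invariance is secured, computing $\mu_0(b)$ is immediate from $b(X(t,x_0))=\partial_t X(t,x_0)$ and the fundamental theorem of calculus:
\[
\mu_0(b)=\frac{1}{T}\int_0^T\frac{\partial X}{\partial t}(t,x_0)\,dt=\frac{X(T,x_0)-x_0}{T}=\frac{k}{T}=\zeta,
\]
so $\zeta\in\sfC_b\setminus\{0_{\R^2}\}$. To conclude, Theorem~\ref{thm.FM} asserts that $\sfC_b$ is a closed line segment contained in some vector line of $\R^2$; since $\zeta\neq 0_{\R^2}$ belongs to $\sfC_b$, this vector line must be $\R\zeta$. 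Therefore $\sfC_b=I\,\zeta$ for some closed segment $I\subset\R$, and the presence of $1\in I$ forces $I\neq\{0\}$.
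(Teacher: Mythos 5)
Your argument is logically valid as a free-standing derivation, but it diverges from the paper's proof in a way that matters for the paper's overall logical architecture. The paper proves the lemma without invoking Theorem~\ref{thm.FM}: to place $\zeta=k/T$ in $\sfC_b$ it uses the asymptotic limit $X(nT,x_0)/(nT)\to k/T$ together with the equality $\rho(b)=\sfC_b$ (from \eqref{rhobCb} and Remark~\ref{rem.X1}), rather than constructing the orbital measure $\mu_0$ as you do (both work, and your explicit $\mu_0$ is a perfectly fine alternative). But the decisive step — showing that $\sfC_b$ is confined to the line $\R\zeta$ — is carried out in the paper via Peirone's orbit-confinement estimate~\eqref{comorb}, Birkhoff's theorem applied to ergodic measures, and the representation $\sfC_b={\rm conv}(\sfE_b)$ from~\eqref{CbEb}, precisely so that the collinearity property~\eqref{FMcol} can later be recovered as a \emph{by-product} of Proposition~\ref{pro.Cb2d} (see Remark~\ref{rem.Cb2d}). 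Since Lemma~\ref{lem-per} is itself invoked inside the proof of Proposition~\ref{pro.Cb2d}, reaching for Theorem~\ref{thm.FM} at this point would make that ``by-product'' claim circular. So your route is shorter and correct as written, but it buys that brevity at the cost of the independence from Franks--Misiurewicz that the paper is deliberately preserving; the paper's route is longer (Peirone plus Birkhoff plus ergodic decomposition) but keeps the derivation internal to the ODE framework and purely additive over~\cite{FrMi}.
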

\begin{proof}{ of Lemma~\ref{lem-per}}
Under the assumptions of Lemma~\ref{lem-per} Peirone proved that for any orbit $X(\R,x)$, $x\in\R^2$, there exist $\alpha,\beta\in\R$ such that
\beq\label{comorb}
\forall\,t\in\R,\quad X(t,x)\cdot k^\perp\in(\alpha,\beta),\quad\mbox{with }k^\perp:=R_\perp k.
\eeq
On the other hand, let $\mu_e$ be an ergodic invariant probability measure for the flow $X$ \eqref{bX}.
By virtue of Birkhoff's theorem there exists a vector $\zeta_e\in\sfC_b$ such that
\[
\lim_{t\to\infty}{X(t,x)\over t}=\zeta_e\quad\mbox{for $\mu$-a.e. }x\in Y_2.
\]
This combined with \eqref{comorb} implies that $\zeta_e\cdot k^\perp=0$, {\em i.e.} $\zeta_e\parallel k$.
Hence, since by \eqref{CbEb} $\sfC_b$ is the compact convex hull of the vectors $\zeta_e$ for $\mu_e\in\cE_b$, we deduce that $\sfC_b=J\,k$ for some closed segment $J$ of $\R$, with $1/T\in J$ due to \eqref{kT}. 
\end{proof}
\par\medskip\noindent
{\it Proof of the sufficient condition of $({\rm I})$.} Assume that the flow $X$ \eqref{bX} has two infinite periodic compact orbits in $Y_2$ with distinct asymptotics $k_1/T_1$ and $k_2/T_2$ for some $k_1,k_2\in\Z^2$.
Hence, we deduce from Lemma~\ref{lem-per} that $\sfC_b=I\,\zeta$ for some commensurable vector $\zeta$ of $\R^2$, and for some closed segment $I$ of $\R$ which is not reduced to a single point, since $k_1/T_1$ and $k_2/T_2$ are in $\sfC_b$.  
\par\medskip\noindent
{\it Proof of $({\rm III})$.}
If $\sfC_b = \{\zeta\}$ with $\zeta$ an incommensurable vector of $\R^2$, then $b$ does not vanish in $Y_2$ due to $\zeta\neq 0_{\R^2}$.
This corresponds exactly to the second case in the proof of \cite[Theorem~3.1]{Pei}, {\em i.e.} the flow $X$ \eqref{bX} has no infinite periodic compact orbit in $Y_2$.
\par
Conversely, assume that the vector field $b$ does not vanish in $Y_2$, and that the flow $X$ has no periodic compact orbit in $Y_2$.
In this case, Peirone \cite[Theorem~3.1]{Pei} proved that there exists a vector $\zeta\in\R^2$ such that
\beq\label{zeta}
\forall\,x\in Y_2,\quad\lim_{t\to\infty}{X(t,x)\over t}=\zeta,
\eeq
or equivalently, by \cite[Proposition~2.1]{BrHe1} $\sfC_b=\{\zeta\}$.
\par\noindent
It remains to show that the vector $\zeta$ is incommensurable (see Remark~\ref{rem.Cb2d}).
To this end, assuming in addition that $b\in C^3_\sharp(Y_2)^2$, we will proceed to a constructive proof based on two classical homeomorphic flows (see \cite[Chapter~VII.14]{Har}).
\par
First, by \cite[Section~3]{Aku} (see also \cite[Theorem~2]{Str2}) combined with the regularity $b\in C^3_\sharp(Y_2)^2$, there exists a $C^2$-diffeomorphism $\Psi$ on $Y_2$ such that the flow $\widehat{X}$ defined by \eqref{hX} is associated with the vector field $\widehat{b}\in C^1_\sharp(Y_2)^2$ defined by \eqref{hb} satisfying $\widehat{b}_1>0$ in $Y_2$.
By Remark~\ref{rem.diff} the diffeomorphism $\Psi$ reads as \eqref{psiApsid}, where $A\in\Z^{2\times 2}$ with $\det\,(A)=\pm1$ and $\Psi_\sharp\in C^1_\sharp(Y_2)^{2\times 2}$.
Hence, we easily deduce that
\beq\label{Chb}
\forall\,x\in Y_2,\quad\lim_{t\to\infty}{\widehat{X}(t,x)\over t}=A\,\zeta.
\eeq
Moreover, since
\[
\forall\,x\in Y_2,\ \forall\,k\in\Z^2,\quad\Psi^{-1}(x+k)-\Psi^{-1}(x)=A^{-1}k\in\Z^2,
\]
and the flow $X$ has no compact orbit, the flow $\widehat{X}$ defined by \eqref{hX} has no compact orbit either.
\par
Next, define the mapping $\widetilde{X}\in C^1(\R;C^1_\sharp(Y_2))^2$ by
\beq\label{tX}
\widetilde{X}(t,x):=\widehat{X}(f(t,x),x)\quad\mbox{for }(t,x)\in\R\times Y_2,
\eeq
where $f(\cdot,x)$ is the solution to the first-order ODE
\beq\label{ftX}
{\partial f\over\partial t}(t,x)={1\over \widehat{b}_1\big(\widehat{X}(f(t,x),x)\big)},\quad f(0,x)=0.
\eeq
Let $x\in\R^2$ and $k\in\Z^2$. Since the function $\widehat{b}_1(\widehat{X}(t,\cdot))$ is $\Z^2$-periodic for any $t\in\R$, $f(\cdot,x)$ and $f(\cdot,x+k)$ are both solutions to equation \eqref{ftX}. Hence, by the uniqueness of such a solution we deduce that $f(\cdot,x)$ and $f(\cdot,x+k)$ agree in $\R$, which implies that $f(t,\cdot)$ is $\Z^2$-periodic for any $t\in\R$.
Moreover, ${\partial f/\partial t}$ is uniformly bounded from above and below by positive constants in $\R\times Y_2$, so is $f(t,\cdot)/t$ in $(0,\infty)\times Y_2$ by the mean value theorem.
\par\noindent
Next, by the chain rule it is easy to check that the mapping $\widetilde{X}$ agrees with the flow associated with the vector field $\widetilde{b}\in C^1_\sharp(Y_2)^2$ defined by
\beq\label{tb}
\widetilde{b}:=\big(1,\widehat{b}_2/\widehat{b}_1\big)\quad\mbox{in }Y_2.
\eeq
Moreover, by the formula (14.4) of \cite[Exercise~14.3, p.~199]{Har} we have
\beq\label{Ctb}
 \lim_{t\to\infty}{\widetilde{X}(t,x)\over t}=(1,\alpha),
\eeq
where $\alpha$ is the Poincar\'e rotation number of the flow $\widetilde{X}$ defined by (see, {\em e.g.}, \cite[Lemma~13.1]{Har})
\beq\label{alpha}
\alpha:=\lim_{n\to\infty}{\widetilde{X}_2(n,x_2e_2)\over n}\quad\mbox{for any }x_2\in\R.
\eeq
Since $f(\cdot,x)$ is positive in $(0,\infty)$ and the flow $\widehat{X}$ has no compact orbit, neither has the flow $\widetilde{X}$ defined by \eqref{tX}.
Hence, by virtue of \cite[Theorem~14.1]{Har} the number $\al$ is irrational.
\par
Finally, from \eqref{zeta}, \eqref{Chb}, \eqref{hX}, \eqref{Ctb}, \eqref{alpha} we deduce that
\beq\label{zeal}
\forall\,x\in Y_2,\quad \lim_{t\to\infty}{\widetilde{X}(t,x)\over t}=\left(\lim_{t\to\infty}{f(t,x)\over t}\right)A\,\zeta=(1,\alpha).
\eeq
Indeed, the boundedness of $f(t,\cdot)/t$ from above and below by positive constants in $(0,\infty)\times Y_2$ implies that $A\,\zeta$ is not null, which in return implies that the limit of $f(t,x)/t$ as $t\to\infty$ does exist.
Therefore, due to the invertibility of $A$ in $\Z^{2\times 2}$ and the irrationality of $\alpha$, equality \eqref{zeal} yields the incommensurability of $\zeta$. 
\par\medskip\noindent
{\it Proof of the necessary condition for $({\rm II})$.}
Assume that $\sfC_b = \{\zeta\}$ with a non null commensurable vector $\zeta$ of $\R^2$.
On the one hand, due to $\zeta\neq 0_{\R^2}$ the vector field $b$ does not vanish in $Y_2$.
On the other hand, it follows from the equivalence of case $({\rm III})$ that the flow $X$ \eqref{bX} has at least one infinite periodic compact orbit in $Y_2$. Finally, we deduce from the sufficient condition of case~$({\rm I})$ that all infinite periodic compact orbits have $\zeta$ as asymptotics.  
\par\medskip\noindent
{\it Proof of  $({\rm IV})$.}
Assume that both cases $({\rm I})$, $({\rm II})$, $({\rm III})$ do not hold.
First of all, note that if the vector field $b$ does not vanish in $Y_2$, then the alternative in the proof of \cite[Theorem~3.1]{Pei} shows that either case $({\rm I})$ or case $({\rm II})$ is satisfied if there exists an infinite periodic orbit in $Y_2$, or case $({\rm III})$ is fulfilled if there is no periodic orbit in $Y_2$.
Therefore, in the present case the vector field $b$ does vanish in $Y_2$, and thus $0_{\R_2}\in\sfC_b$.
\par
Now, assume that there exist two non collinear vectors $\xi$ and $\eta$ in $\sfC_b=\rho(b)$ (by Remark~\ref{rem.X1}).
Then, by convexity the rotation set $\sfC_b$ contains the closed triangle $\dT$ of vertices $0_{\R^2}$, $\xi$, $\eta$, which has a non empty interior $\mathring{\dT}$. Hence, by virtue of \cite[Theorem~3.2]{Fra2} the flow $X$ has two periodic orbits \eqref{infperorb} with distinct asymptotics $k_1/T_1$ and $k_2/T_2$ in $(\mathring{\dT}\cap \Q^2)\setminus\{0_{\R^2}\}$. These orbits are infinite periodic compact orbits with distinct asymptotics, which implies that $\sfC_b$ satisfies case $({\rm I})$, and leads us to a contradiction.
Therefore, the rotation set $\sfC_b$ is a closed line segment passing through $0_{\R^2}$, {\em i.e.} $\sfC_b = I \,\zeta$ with $I$ a closed segment of~$\R$ containing~$0$, and $\zeta$ a non null vector of $\R^2$.
If $I=\{0\}$, then we have $\sfC_b = \{0_{\R^2}\}$.
Otherwise, if $I\neq \{0\}$, since case $({\rm I})$ does not hold by hypothesis, the vector $\zeta$ is necessarily incommensurable.
\par
The proof of Proposition~\ref{pro.Cb2d} is now complete.
\end{proof}
\subsection{New results on Herman's rotation set}\label{ss.morerotset}
In this section we study several extensions of the Franks-Misiurewicz Theorem~\ref{thm.FM}, which are specific to the ODE's context.
\par
First, the Herman rotation set $\sfC_b$ can be actually characterized more precisely in the commensurable cases $({\rm I})$ and $({\rm II})$ of Proposition~\ref{pro.Cb2d}, when $b$ does not vanish on $Y_2$.
\begin{apro}\label{pro.Cb2dcom}
Let $b$ be a two-dimensional vector field in $C^1_\sharp(Y_2)^2$ which does not vanish in~$Y_2$, and such that the flow $X$ \eqref{bX} has at least one infinite periodic compact orbit in $Y_2$. Let us write $b:=\rho\,\Phi$, where $\rho$ is a positive function in $C^1_\sharp(Y_2)$ and $\Phi$ is a non vanishing vector field in~$C^1_\sharp(Y_2)^2$. Then, we have the following results:
\begin{itemize}
\item[$(i)$] There exist a unique commensurable unit vector $\zeta_\Phi$ of $\R^2$, only depending on~$\Phi$, satisfying either $\zeta_\Phi\cdot e_1>0$ or $\zeta_\Phi=e_2$, and a closed segment $I_b$ of $\R$ not reduced to $\{0\}$, such that the rotation set reads as $\sfC_b=I_b\,\zeta_\Phi$.
Moreover, if $b$ does not vanish in $Y_2$ and $\sfC_b$ is not a unit set, $I_b$ is a segment of the type:
\beq\label{Ibne0}
I_b=[\al,\be]\quad\mbox{with}\quad 0<\al<\be\;\;\mbox{or}\;\;\al<0<\be,
\eeq
{\em i.e.} $0_{\R^2}$ is not an end point of the rotation set $\sfC_b$.
\item[$(ii)$] The commensurable case $({\rm I})$ of Proposition~\ref{pro.Cb2d} is stable under a uniform perturbation of the positive function~$\rho$ with a fixed vector field $\Phi$ in the representation $b=\rho\,\Phi$.
\end{itemize}
\end{apro}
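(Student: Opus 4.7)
The central observation is that since $\rho$ is positive and continuous on the compact torus $Y_2$, the flows of $b=\rho\,\Phi$ and $\Phi$ share the same orbits up to a strictly increasing $C^1$ time reparametrization: denoting by $Y(\cdot,x)$ the flow of $\Phi$, one has $X(t,x)=Y(s(t,x),x)$ where $s$ solves $\partial_t s=\rho(Y(s,x))$, $s(0,x)=0$, and $s(\cdot,x):\R\to\R$ is a bijective diffeomorphism. Consequently, an orbit is infinite periodic compact with winding $k\in\Z^2\setminus\{0_{\R^2}\}$ for $X$ if and only if it is so for $Y$. Applying Lemma~\ref{lem-per} to both $b$ and $\Phi$ places $\sfC_b$ and $\sfC_\Phi$ on the common line $\R k$. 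The unit vector $\zeta_\Phi$ on this line normalised by $\zeta_\Phi\cdot e_1>0$ (or $\zeta_\Phi=e_2$) depends only on $\Phi$, and one obtains $\sfC_b=I_b\,\zeta_\Phi$ with $I_b$ containing the nonzero asymptotics of the given periodic orbit, so $I_b\neq\{0\}$.

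For the ``moreover'' part of $(i)$, I would prove the stronger statement that $0_{\R^2}\notin\sfC_b$. Since we are in case $(\mathrm{I})$ of Proposition~\ref{pro.Cb2d}, the flow has rational rotation number, and by classical results on non-vanishing $C^1$ flows on $Y_2$ (see \cite[Chapter~VII]{Har}) every orbit admits a periodic orbit as $\omega$-limit set; every ergodic measure $\nu\in\cE_b$ is therefore concentrated on a single periodic orbit of some winding $k\in\Z^2\setminus\{0_{\R^2}\}$ and period $T>0$, yielding $\nu(b)=k/T$. Moreover, two disjoint non-contractible periodic orbits of a non-vanishing flow on $Y_2$ are necessarily isotopic with matching orientation (otherwise an index argument on the annulus they cobound would produce a zero of $b$), so all such winding vectors point in a common direction. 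Consequently all $\nu(b)$ for $\nu\in\cE_b$ lie on one open half-line $\R_{>0}\,\zeta_\Phi$ (up to the sign of $\zeta_\Phi$), and by~\eqref{CbEb} the same holds for $\sfC_b=\mathrm{conv}(\sfE_b)$; in particular $0_{\R^2}$ is excluded from $\sfC_b$ and a fortiori is not an endpoint.

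For $(ii)$, let $\widetilde{b}=\widetilde{\rho}\,\Phi$ with $\widetilde{\rho}\in C^1_\sharp(Y_2)$ positive and uniformly close to~$\rho$. Since $\widetilde{b}$, $b$, and $\Phi$ share the same orbits, part~$(i)$ applied to $\widetilde{b}$ gives $\sfC_{\widetilde{b}}=I_{\widetilde{b}}\,\zeta_\Phi$ with the same direction, and it remains to check that $I_{\widetilde{b}}$ is not a singleton. By case $(\mathrm{I})$ for $b$, there exist two infinite periodic compact orbits $\gamma_1,\gamma_2$ with distinct asymptotics $k_1/T_1\neq k_2/T_2$; these same geometric orbits are periodic for $\widetilde{b}$ with periods $\widetilde{T}_i=\int_0^{T'_i}ds/\widetilde{\rho}(Y(s,x_i))$, where $T'_i$ is the period of $\gamma_i$ for $\Phi$. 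Since $\widetilde{\rho}$ stays uniformly bounded below away from $0$ on $Y_2$ for small perturbations, the map $\widetilde{\rho}\mapsto\widetilde{T}_i$ is continuous in the uniform norm, so the inequality $k_1/\widetilde{T}_1\neq k_2/\widetilde{T}_2$ persists when $\|\rho-\widetilde{\rho}\|_\infty$ is small enough, placing two distinct vectors in $\sfC_{\widetilde{b}}$ as required.

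The principal obstacle is the ``moreover'' step in $(i)$: it requires combining the Poincar\'e-Bendixson structure of non-vanishing $C^1$ flows on $Y_2$ with a topological/index control on the orientations of periodic orbits, in order to conclude that all ergodic rotation vectors lie on a single open half-line of direction $\zeta_\Phi$. This is a genuinely two-dimensional input with no counterpart in higher dimension. The other steps -- the time reparametrization observation and the continuous dependence of the reparametrized periods on $\rho$ -- are essentially routine once this topological ingredient is in place.
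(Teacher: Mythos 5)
Your part $(i)$ up to the ``moreover'' and your part $(ii)$ are sound, and $(ii)$ is in fact a cleaner route than the paper's (which argues by contradiction via Peirone's theorem and the perturbation result of~[BrHe1, Theorem~3.1]): your direct observation that the two geometric orbits of $b$ with distinct asymptotics persist as periodic orbits of $\widetilde{b}$ with continuously varying periods $\widetilde{T}_i=\int_0^{T'_i} ds/\widetilde{\rho}(Y(s,x_i))$ gives the stability of case~(I) immediately.

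However, the ``moreover'' step in $(i)$ contains a genuine error. You claim the stronger statement $0_{\R^2}\notin\sfC_b$, and you base it on the assertion that two disjoint non-contractible periodic orbits of a non-vanishing flow on $Y_2$ must have matching orientation, arguing that opposite orientations would force a zero of $b$ on the annulus they cobound. This is false: the annulus has Euler characteristic $0$, so Poincar\'e--Hopf imposes no constraint, and a rotating non-vanishing field tangent to both boundary circles exists. Concretely, take
\[
b(x_1,x_2)=\big(\cos(2\pi x_2),\,\sin(2\pi x_2)\big),\qquad (x_1,x_2)\in Y_2,
\]
which is $C^1_\sharp$, has $|b|\equiv 1$, and has $\{x_2=0\}$ and $\{x_2=1/2\}$ as periodic orbits with winding vectors $e_1$ and $-e_1$ respectively (each of period $1$). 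Then $e_1,-e_1\in\sfC_b$, so $\sfC_b=[-1,1]\,e_1$ and $0_{\R^2}$ lies in the \emph{interior} of the rotation set. Your conclusion that all ergodic rotation vectors lie on one open half-line, and hence $0_{\R^2}\notin\sfC_b$, is therefore wrong, and your proof would misclassify $I_b$ as always of the form $0<\alpha<\beta$, whereas the statement explicitly allows $\alpha<0<\beta$.

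The paper proves only the weaker (and correct) assertion that $0_{\R^2}$ is not an \emph{endpoint} of $\sfC_b$, and does so differently: if $0_{\R^2}$ were an endpoint, it would be an extremal point of the convex segment $\sfC_b$, so by the representation~\eqref{CbEb} there would exist an ergodic measure $\nu\in\cE_b$ with $\nu(b)=0_{\R^2}$, and then Franks' theorem~[Fra1, Theorem~3.5] would force $b$ to vanish, contradicting the hypothesis. This argument says nothing about the interior of $\sfC_b$ and thus correctly leaves both alternatives $0<\alpha<\beta$ and $\alpha<0<\beta$ open. Your Poincar\'e--Bendixson observation that, for a non-vanishing flow with a periodic orbit, every ergodic measure is supported on a periodic orbit, is fine and is genuinely two-dimensional input; the gap is specifically the false orientation-matching claim and the ensuing overreach.
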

\begin{proof}{ of Proposition~\ref{pro.Cb2dcom}}
\par\smallskip\noindent
{\it Proof of part $(i)$.}
By Lemma~\ref{lem-per} the rotation set reads as $\sfC_b=J\,k$ for some closed segment $J\neq \{0\}$ of $\R$, and some vector $k\in\Z^2\setminus\{0_{\R^2}\}$. Now, consider the flow $X_\Phi$ associated with the vector field $\Phi$ (recall that $b=\rho\,\Phi$).
Let $\nu$ be an invariant probability measure for the flow $X_\Phi$, and define the probability measure $\mu$ on $Y_2$ by
\[
d\mu(x):={c_\nu\over\rho(x)}\,d\nu(x)\quad\mbox{where}\quad c_\nu:=\left(\int_{Y_2}{1\over\rho(x)}\,d\nu(x)\right)^{-1}\in(0,\infty).
\]
By virtue of Proposition~\ref{pro.divcurl} we have
\[
\forall\,\varphi\in C^1_\sharp(Y_d),\quad \int_{Y_2} b(x)\cdot\nabla\varphi(x)\,d\mu(x)=c_\nu\int_{Y_d} \Phi(x)\cdot\nabla\varphi(x)\,d\nu(x)=0,
\]
which in return implies that the probability measure $\mu$ is invariant for the flow $X$ associated with $b$.
Hence, we get that
\beq\label{CPhiCb}
\int_{Y_2} \Phi(x)\,d\nu(x)=1/c_\nu\int_{Y_2} b(x)\,d\mu(x)\in 1/c_\nu\,\sfC_b.
\eeq
Similarly, for any invariant probability measure $\mu$ for the flow $X$, there exist an invariant probability measure $\nu$ for the flow $X_\Phi$ and a constant $c_\mu>0$ such that
\beq\label{CbCPhi}
\int_{Y_2} b(x)\,d\mu(x)=1/c_\mu\int_{Y_2} \Phi(x)\,d\mu(x)\in 1/c_\mu\,\sfC_\Phi.
\eeq
Therefore, combining \eqref{CPhiCb} and \eqref{CbCPhi} with $\sfC_b=I\,k$, the rotation set $\sfC_\Phi$ associated with $\Phi$ reads as $\sfC_\Phi=J\,k$ for some closed segment $J$ of $\R$ not reduced to $\{0\}$.
Hence, the compact convex set $\sfC_\Phi$ uniquely reads as $\sfC_\Phi=I_\Phi\,\zeta_\Phi$, where $\zeta_\Phi$ is the unit vector parallel to $k$ satisfying either $\zeta_\Phi\cdot e_1>0$ or $\zeta_\Phi=e_2$, and $I_\Phi$ is a closed segment of $\R$ not reduced to $\{0\}$.
Therefore, the vector $\zeta_\Phi$ is commensurable in $\R^2$, and the rotation set can be written as $\sfC_b=I\,k=I_b\,\zeta_\Phi$ for some closed segment $I_b$ of $\R$ not reduced to $\{0\}$.
\par
Now, assume that the vector field $b$ does not vanish in $Y_2$, and that $\sfC_b$ is not a unit set.
Then, the segment $I_b$ does satisfy \eqref{Ibne0}.
Otherwise, the null vector $0_{\R^2}$ is an end point of the closed line segment $\sfC_b=I_b\,\zeta_\Phi$.
Hence, by representation \eqref{CbEb} there exists an ergodic invariant probability measure $\mu$ such that $0_{\R^2}=\mu(b)$.
Therefore, by virtue of the Franks result \cite[Theorem~3.5]{Fra1} the vector field $b$ does vanish, which yields a contradiction.
\par\medskip\noindent
{\it Proof of part $(ii)$.}
Let $\rho_0$ be a positive function in $C^1_\sharp(Y_2)$ and let $\Phi_0$ be a non vanishing vector field in $C^1_\sharp(Y_2)^2$.
Set
\[
m_0:=\min_{x\in Y_2} \rho_0(x)>0.
\]
If the vector field $b_0 = \rho_0\,\Phi_0$ satisfies the conditions of the case~(I) of Proposition~\ref{pro.Cb2d}, it is enough to prove the existence of $r\in (0,m_0)$ such that any vector field $b = \rho\,\Phi_0$ with $|\rho-\rho_0| < r$, also fulfills the conditions of case~(I). Note that the assumptions of the case~(I) for a non vanishing vector field $b$ in $Y_2$ are equivalent to the following conditions stated by Peirone \cite[Theorem~31]{Pei}: the flow associated with $b$ admits at least one infinite compact orbit and $\# \sfC_b > 1$.
\par
To prove the desired stability statement, consider the vector field $b_0$ as above, and assume by contradiction that there exist a decreasing sequence $(r_n)_{n\geq 1}$ of positive numbers satisfying $r_1 < m_0$ and converging to $0$, and a sequence $(\theta_n)_{n\geq 1}$ of positive functions in $C^1_\sharp(Y_2)$  satisfying
\[
\forall\,n\geq 1,\quad |\theta_n-\rho_0| < r_n,
\]
such that for any $n\geq 1$, the vector field $b_n = \theta_n\,\Phi_0$ does not satisfy the conditions of case~(I).
This combined with the case $(2)$ of \cite[Theorem~31]{Pei} (noting that $b_n$ does not vanish in $Y_2$) implies that $\# \sfC_{b_n} = 1$. Then, an easy adaptation of the perturbation result \cite[Theorem~3.1]{BrHe1} (using that the sequence $(\rho/\theta_n)_{n\geq 1}$ is uniformly bounded) shows that $\# \sfC_{b_0} = 1$, which yields a contradiction.
\par
This concludes the proof of of Proposition~\ref{pro.Cb2dcom}.
\end{proof}
\par\bigskip
The next result provides an incomplete, but a rather large framework which illustrates the incommensurable case $(c)$ of Theorem~\ref{thm.FM}, and which makes valid some appropriate ODE's approach.
It deals with two general classes of vector fields for which the incommensurable case~(IV) of Proposition~\ref{pro.Cb2d} holds true.
The first class is a simple illustration of case (IV).
Roughly speaking, the second class is based on any vanishing vector field $b$ associated with an invariant probability measure $\sigma(x)\,dx$, such that $\sigma b$ is a non vanishing regular vector field. Note that the Lebesgue density $\sigma$ has to blow up at the roots of $b$ and to be regular outside, in order to the vector field $\sigma b$ to be both non vanishing and regular.
\begin{apro}\label{pro.FMincom}
Let $b$ be a vector field in $C^1_\sharp(Y_2)^2$.
We have the following results:
\begin{itemize}
\item[$(i)$] Assume that there exists an integer vector $k\in\Z^2\setminus\{{0_{\R^2}}\}$ satisfying $k\cdot b\geq 0$ in $Y_2$, and that $\sfC_b=I\,\zeta$ with $I$ a closed segment of $\R$ and $\zeta$ an incommensurable vector of $\R^2$.
Then, we have $I\subset[0,\infty)$ or $I\subset(-\infty,0]$.
If in addition $0_{\R^2}\in \sfC_b$, then the null vector $0_{\R^2}$ is an end point of the closed line segment $\sfC_b$.
\item[$(ii)$] Assume that $b$ does vanish in $Y_2$, and that there exists an invariant probability measure $d\mu(x)=\sigma(x)\,dx\in\cI_b$ with a non negative function $\sigma\in L^1_\sharp(Y_2)$ of mean value $1$, such that $\sigma\,b$ is a non vanishing vector field in $C^1_\sharp(Y_2)^2$ and $\mu(b)$ is incommensurable in $\R^2$.
Then, the rotation set $\sfC_b$ is a closed line segment of $\R^2$ not reduced to a single point, with one end at $0_{\R^2}$ and having irrational slope.
\end{itemize}
\end{apro}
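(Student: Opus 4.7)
\medskip\noindent
\textbf{Proof plan.} The two items are handled by testing against appropriately chosen invariant measures and then invoking the Franks--Misiurewicz Theorem~\ref{thm.FM}.

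For part $(i)$, the plan is to project the rotation set onto $k$. For any $\mu\in\cI_b$, the pointwise inequality $k\cdot b\geq 0$ on $Y_2$ yields $k\cdot\mu(b)=\mu(k\cdot b)\geq 0$, so $k\cdot\sfC_b\subset[0,\infty)$. Writing $\sfC_b=I\,\zeta$, this gives $(k\cdot\zeta)\,I\subset[0,\infty)$. The key observation is that $k\cdot\zeta\neq 0$: otherwise $\zeta$ would be collinear with $R_\perp k\in\Z^2\setminus\{0_{\R^2}\}$, which would force $\zeta$ to be commensurable and contradict the hypothesis. The fixed sign of $k\cdot\zeta$ then forces $I\subset[0,\infty)$ or $I\subset(-\infty,0]$. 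Finally, if $0_{\R^2}\in\sfC_b$, then $0\in I$, and since $I$ is a closed interval of constant sign, $0$ is necessarily an endpoint, proving that $0_{\R^2}$ is an end of $\sfC_b$.

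For part $(ii)$, I would first exhibit two distinct points in $\sfC_b$. Since $b$ vanishes at some $x_0\in Y_2$, the flow satisfies $X(t,x_0)=x_0$ for all $t$, so the Dirac mass $\delta_{x_0}$ is an invariant probability measure, giving $0_{\R^2}=b(x_0)=\delta_{x_0}(b)\in\sfC_b$. On the other hand, by assumption
\[
\mu(b)=\int_{Y_2}\sigma(x)\,b(x)\,dx\in\sfC_b
\]
is incommensurable, hence nonzero. Therefore $\sfC_b$ is not a unit set, and Theorem~\ref{thm.FM} must place it in case $(b)$ or case $(c)$. Case $(b)$ is excluded, since it would require $\sfC_b$ to lie on a line through $0_{\R^2}$ and a nonzero commensurable vector, forcing every element of $\sfC_b$ to be commensurable; this is incompatible with $\mu(b)\in\sfC_b$ being incommensurable. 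Hence case $(c)$ holds: $\sfC_b=[0_{\R^2},\zeta]$ for some incommensurable vector $\zeta\in\R^2$, which is precisely a closed segment not reduced to a point, with one end at $0_{\R^2}$ and irrational slope.

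The steps involve no real technical obstacle: the only subtlety is the algebraic identification in part $(i)$ between $k\cdot\zeta=0$ and the commensurability of $\zeta$, and in part $(ii)$ the remark that any zero of $b$ automatically produces a Dirac invariant measure, so that the non-vanishing/regularity of $\sigma\,b$ serves only to guarantee via the assumption that $\mu(b)=\overline{\sigma\,b}$ is incommensurable, pushing $\sfC_b$ into the exceptional case $(c)$ of Theorem~\ref{thm.FM}.
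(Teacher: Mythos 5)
Your proof of part $(i)$ matches the paper's essentially line for line: test any $\mu\in\cI_b$ against $k$, note $k\cdot\zeta\neq 0$ by incommensurability, and read off the sign constraint on $I$. You spell out why $k\cdot\zeta\neq 0$ (that $k\cdot\zeta=0$ would make $\zeta$ parallel to $R_\perp k\in\Z^2\setminus\{0_{\R^2}\}$, hence commensurable), which the paper leaves implicit, but the argument is the same.

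Part $(ii)$ is correct but takes a genuinely different route. The paper rewrites the hypothesis as $\sigma\,b=R_\perp\nabla u$ with $\overline{\nabla u}=-R_\perp\mu(b)$ incommensurable, and then invokes \cite[Corollary~3.4]{BrHe1} (a Stepanoff-type characterization of rotation sets, applied with $\rho=1/\sigma$, $a=1$) to compute $\sfC_b=[0_{\R^2},\mu(b)]$ outright, with no appeal to Theorem~\ref{thm.FM}. You instead feed the data into the Franks--Misiurewicz trichotomy: the Dirac mass at a zero of $b$ puts $0_{\R^2}\in\sfC_b$, the hypothesis puts the incommensurable vector $\mu(b)\in\sfC_b$, cases $(a)$ and $(b)$ of Theorem~\ref{thm.FM} are then impossible (case $(b)$ would force every element of $\sfC_b$ to be a real multiple of an integer vector), so case $(c)$ holds. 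This is shorter and logically sound, and it is a clean observation that the two membership facts alone already pin down case $(c)$. What it buys less of: you obtain only the qualitative conclusion ($\sfC_b=[0_{\R^2},\zeta]$ for \emph{some} incommensurable $\zeta$) rather than the explicit identification $\zeta=\mu(b)$ that the paper's computation delivers; and since the whole point of this subsection (see the text preceding the proposition and Remark~\ref{rem.incom}) is to exhibit an ODE class that \emph{realizes} case $(c)$ by means independent of \cite{FrMi}, deriving the result from Theorem~\ref{thm.FM} is somewhat circular in spirit, even if not in logic.
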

\begin{proof}{ of Proposition~\ref{pro.FMincom}}
\par\smallskip\noindent
{\it Proof of part $(i)$.} We have for any invariant probability measure $\mu\in\cI_b$,
\[
k\cdot\mu(b)=\int_{Y_2}\underbrace{k\cdot b(x)}_{\geq 0}d\mu(x)\geq 0,
\]
which implies that
\[
k\cdot\sfC_b=(k\cdot\zeta)\,I\subset[0,\infty).
\]
Moreover, since the vector $\zeta$ is incommensurable and $k\neq 0_{\R^2}$, we have $k\cdot\zeta\neq 0$.
Hence, we deduce that the segment $I$ satisfies $I\subset[0,\infty)$ or $I\subset(-\infty,0]$.
If in addition $0_{\R^2}\in \sfC_b$, then there exists $\al\in\R$ such $I=[0,\al]$ or $I=[\al,0]$, so that $0_{\R^2}$ is an end point of the closed line segment $\sfC_b=I\,\zeta$.
\par\medskip\noindent
{\it Proof of part $(ii)$.}
First note that our assumption is equivalent to the existence of a gradient field $\nabla u\in C^1_\sharp(Y_2)^2$ such that
\[
\sigma\,b=R_\perp\nabla u\;\;\mbox{a.e. in }Y_2\quad\mbox{and}\quad
\overline{\nabla u}\;\;\mbox{is incommensurable in }\R^2,
\]
since
\[
\overline{\nabla u}=-\int_{Y_2}R_\perp b(x)\,d\mu(x)=-\,R_\perp\,\mu(b)\;\;\mbox{is incommensurable in }\R^2.
\]
Hence, by virtue of the \cite[Corollary~3.4~(3.22)]{BrHe1} applied with $\rho=1/\sigma$ and $a=1$, the rotation set $\sfC_b$ is given by
\[
\sfC_b=[0_{\R^2},\zeta]\quad\mbox{with}\quad \zeta:=\underline{\rho}\,R_\perp\overline{\nabla u}=R_\perp\overline{\nabla u}\neq 0_{\R^2}.
\]
Therefore, $\sfC_b$ is a closed line segment with one end at $0_{\R^2}$ and having irrational slope due to the incommensurability of $\zeta$ in $\R^2$.
\end{proof}
\par\bigskip
The incommensurable case~(IV) of Proposition~\ref{pro.Cb2d} is by far the most intricate.
In contrast with Proposition~\ref{pro.FMincom} $(ii)$, the following general result shows that the non singleton case in the alternative of~(IV) is actually exceptional due to a counterintuitive non negativity constraint.
\begin{atheo}\label{thm.Ssgn}
Let $b=a\,\Phi$ be a vector field in $C^1_\sharp(Y_2)^2$, where $a$ is a changing sign function in $C^1_\sharp(Y_2)$, and where $\Phi$ is a non vanishing vector field in  $C^3_\sharp(Y_2)^2$.
Also assume that there exists a positive function $\sigma$ in $C^3_\sharp(Y_2)$ with mean value $1$, such that $\sigma\,\Phi$ is divergence free in $\R^2$, and such that the mean value $\overline{\sigma\,\Phi}$ is incommensurable in $\R^2$.
Then, the support of any invariant probability measure in $\cI_b$ is contained in the set $\{a=0\}$, and thus in particular $\sfC_b=\{0_{\R^2}\}$.
\end{atheo}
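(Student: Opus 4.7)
The plan is to exploit the unique ergodicity of the auxiliary flow $X_\Phi$ generated by $\Phi$, and then to show that for any $\mu\in\cI_b$, the signed measure $a\mu$ must be invariant under $X_\Phi$. The sign change of $a$ will then force $a\mu\equiv 0$, which is the desired conclusion.

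The first step is to establish that $X_\Phi$ is uniquely ergodic with $\sigma(x)\,dx$ as its only invariant probability measure. Since $\sigma>0$ has mean value~$1$ and $\sigma\,\Phi$ is divergence free, Proposition~\ref{pro.divcurl} gives $\sigma\,dx\in\cI_\Phi$, so that $\overline{\sigma\,\Phi}\in\sfC_\Phi$. Because $\overline{\sigma\,\Phi}$ is incommensurable, the cases $(\text{I})$ and $(\text{II})$ of Proposition~\ref{pro.Cb2d} (where the direction of $\sfC_\Phi$ is commensurable) are ruled out, and case $(\text{IV})$ is excluded by the non vanishing of $\Phi$; the regularity $\Phi\in C^3_\sharp(Y_2)^2$ makes case $(\text{III})$ applicable, so that $\sfC_\Phi=\{\overline{\sigma\,\Phi}\}$. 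By Remark~\ref{rem.Cb2d} (following \cite{Str1}), the flow $X_\Phi$ is then uniquely ergodic.

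The second step is to transfer invariance from $\mu\in\cI_b$ to the signed measure $\lambda:=a\,\mu$ with respect to the flow $\phi_t$ of $\Phi$ on $Y_2$, which is well defined and $C^1$-smooth on the torus thanks to $\Phi\in C^3_\sharp(Y_2)^2$. From $b=a\,\Phi$ and Proposition~\ref{pro.divcurl}~$(iii)$,
$$\int_{Y_2}\Phi(x)\cdot\nabla\psi(x)\,d\lambda(x)=0\quad\text{for all }\psi\in C^1_\sharp(Y_2).$$
Differentiating $t\mapsto\int_{Y_2}\psi\circ\phi_t\,d\lambda$ and using the group property $\phi_{t+s}=\phi_t\circ\phi_s$, which yields the identity $(\nabla\psi)(\phi_t(\cdot))\cdot\Phi(\phi_t(\cdot))=\Phi\cdot\nabla(\psi\circ\phi_t)$, the derivative equals $\int_{Y_2}\Phi\cdot\nabla(\psi\circ\phi_t)\,d\lambda=0$ applied to $\psi\circ\phi_t\in C^1_\sharp(Y_2)$. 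Hence $\lambda$ is $\phi_t$-invariant for every $t\in\R$. The Hahn decomposition of $\lambda$ is exactly $\lambda=a^+\mu-a^-\mu$, since $\{a>0\}$ (resp.\ $\{a<0\}$) is a positive (resp.\ negative) set for $\lambda$. A standard argument based on the uniqueness of the Hahn decomposition under a homeomorphism shows that each of the non negative finite measures $a^+\mu,\,a^-\mu$ is itself $\phi_t$-invariant, so by unique ergodicity
$$a^+\mu=c_1\,\sigma\,dx,\qquad a^-\mu=c_2\,\sigma\,dx\qquad\text{for some }c_1,c_2\geq 0.$$

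The conclusion follows from the change of sign of $a$: the support of $a^+\mu$ is contained in the closed set $\{a\geq 0\}$, while $\sigma>0$ implies that $\sigma\,dx$ charges every non empty open subset of $Y_2$. Since $a\in C^1_\sharp(Y_2)$ changes sign, the open set $\{a<0\}$ is non empty and of positive Lebesgue measure, which forces $c_1=0$; symmetrically $c_2=0$. Therefore $a^+\mu=a^-\mu=0$, so $\mu(\{a\neq 0\})=0$, meaning $\mbox{supp}(\mu)\subset\{a=0\}$, and then $\mu(b)=\int_{Y_2}a\,\Phi\,d\mu=0$, whence $\sfC_b=\{0_{\R^2}\}$. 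The main obstacle, and precisely where the regularity hypothesis $\Phi\in C^3$ is needed, is Step~2: the Liouville proposition as stated only applies to non negative measures, and the flow-differentiation argument above is the device that extends it to the signed measure $a\,\mu$ by exploiting the smoothness of $\phi_t$ to legitimately use $\psi\circ\phi_t\in C^1_\sharp(Y_2)$ as a test function in the divergence-curl identity.
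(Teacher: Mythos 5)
Your proof is correct, and it takes a genuinely different route from the paper's. The paper first reduces the general case to a Stepanoff flow: Akutowicz's straightening plus the Kolmogorov-type linearization conjugate $X_\Phi$ to a flow with vector field $\widehat a\,\zeta$ for an incommensurable $\zeta$ (this is precisely where $\Phi\in C^3$ is used), and the Stepanoff case is then settled by ergodic decomposition, the observation that the sets $\{\pm a>0\}$ are $X$-invariant, and the ergodic lemma \cite[Lemma~5.2]{BrHe2} which identifies a non-negative divergence-free measure times an incommensurable constant vector as a multiple of Lebesgue. You bypass the conjugation entirely: from $\mu\in\cI_b$ you observe that the signed measure $\lambda=a\,\mu$ satisfies $\int\Phi\cdot\nabla\psi\,d\lambda=0$, then upgrade this to $\phi_t$-invariance of $\lambda$ via the flow-differentiation argument built on the cocycle identity $\nabla\phi_t\,\Phi=\Phi\circ\phi_t$ (the same identity as in \eqref{FbX}), then pass to the Jordan pieces $a^\pm\mu$ by the uniqueness of the Jordan decomposition under the bi-measurable bijection $\phi_t$, and finally invoke the unique ergodicity of $X_\Phi$ with unique invariant probability measure $\sigma\,dx$. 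The sign change of $a$ combined with $\sigma>0$ then kills both constants. Your approach buys a more unified argument (no case split, no conjugation), and the flow-differentiation device is an elegant way to extend the Liouville characterization of Proposition~\ref{pro.divcurl} to a signed measure, which the proposition as stated does not cover. Two small inaccuracies worth fixing: first, what you call the Hahn decomposition is the Jordan decomposition of the measure $\lambda$ (the Hahn decomposition is the decomposition of the underlying space) — your argument is the correct one, only the terminology is off. Second, your closing sentence misattributes where $\Phi\in C^3$ is used: the flow-differentiation step only needs $\phi_t\in C^1$, hence $\Phi\in C^1$; the $C^3$ hypothesis is actually consumed in your Step~1, where the unique ergodicity of $X_\Phi$ (Remark~\ref{rem.Cb2d}, resting on \cite{Str1} and the linearization theory) requires higher regularity than $C^1$.
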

\begin{arem}\label{rem.incom} 
Theorem~\ref{thm.Ssgn} shows that the change of sign of the function $a$ forces the Herman rotation set $\sfC_{a\,\Phi}$ to be the unit set $\{0_{\R^2}\}$ in the incommensurable case.
On the contrary, when $a$ is non negative (or non positive), the part~$(ii)$ of Proposition~\ref{pro.FMincom} (with $\sigma=1/a$ and $\sigma\,b=\Phi$) shows that the rotation set $\sfC_{a\,\Phi}$  is a closed line segment with the null vector $0_{\R^2}$ as an end point.
Roughly speaking, these two results are the two complementary sides of the incommensurable case, and illuminate the point $(c)$ of the Franks-Misiurewicz Theorem~\ref{thm.FM} in the ODE's context.
\end{arem}
\begin{proof}{ of Theorem~\ref{thm.Ssgn}}
\par\smallskip\noindent
{\it First step:} The case of the Stepanoff flow, {\em i.e.} the vector field $\Phi$ is a constant vector $\zeta\in\R^2$.
\par\noindent
By virtue of the ergodic decomposition theorem (see, {\em e.g.}, \cite[Theorem~14.2]{Cou}) it is enough to prove that
\beq\label{mua0=1}
\forall\,\mu\in\cE_{a\,\zeta},\quad \mu(\{a=0\})=1.
\eeq
Assume by contradiction that there exists an ergodic invariant probability measure $\mu\in\cE_{a\,\zeta}$ such that $\mu(\{a=0\})<1$.
\par
Note that the sets $\{\pm\,a>0\}$ are invariant for the flow $X$ associated with $a\,\zeta$.
Indeed, for any $x\in Y_2$ such that $\pm\,a(x)>0$, we have
\[
\forall\,t\in\R,\quad \pm\,a(X(t,x))>0.
\] 
Otherwise, there exists $s\in\R$ such that $a(X(s,x))=0$, which implies that $X(\R,x)=\{x\}$, and thus $a(x)=0$, a contradiction. 
\par
Then, by ergodicity we have $\mu(\{a>0\})=1$ or $\mu(\{a<0\})=1$. Without loss of generality we can assume that $\mu(\{a>0\})=1$.
Next, by Proposition~\ref{pro.divcurl} the product of the non negative Radon measure $\nu$ on $Y_2$ defined by
\[
d\nu(x):=a(x)\,d\mu(x)=a^+(x)\,d\mu(x)\quad (\mbox{where $a^+$ denotes the non negative part of $a$})
\]
by the incommensurable vector $\zeta$ is divergence free in $\cD'(\R^2)$.
Thus, by virtue of the ergodic \cite[Lemma~5.2]{BrHe2} there exists a  non negative constant $c$ such that $d\nu(x)=c\,dx$.
Hence, we deduce that
\[
\nu(\{a<0\})=\int_{\{a<0\}}a^+(x)\,d\mu(x)=0=c\int_{\{a<0\}}\,dx.
\]
However, since the continuous function $a$ does change sign in $Y_2$, we have
\[
\int_{\{a<0\}}\,dx>0.
\]
Therefore, we get that $c=0$ and $\nu=0$, which implies that
\[
\mu(\{a>0\})=\int_{\{a>0\}}a^+(x)\,d\mu(x)=\int_{\{a>0\}}d\nu(x)=0,
\]
a contradiction with $\mu(\{a>0\})=1$.
This establishes \eqref{mua0=1}.
\par\medskip\noindent
{\it Second step:} The case where the vector field $\Phi$ is not constant.
\par\noindent
Using successively \cite[Section~3]{Aku} (which leads us to the case $\Phi_1>0$ and which needs the regularity $b\in C^3_\sharp(Y_2)^2$) and the extension of a Kolmogorov's theorem \cite[Theorem~2.3]{Tas} (which is based on the hypothesis $\Phi_1>0$) together with the assumption that $\sigma\,\Phi$ is a non vanishing divergence free vector field in $\R^2$, we get that the flow $X_\Phi$ associated with the vector field $\Phi$ is homeomorphic to a Stepanoff flow $X_{\widehat{\Phi}}$ associated with some vector field $\widehat{\Phi}=\widehat{\alpha}\,\zeta$.
More precisely, there exists a diffeomorphism $\Psi\in C^2_\sharp(Y_2)^2$ on the torus $Y_2$, satisfying \eqref{psiApsid} with $A\in\Z^{2\times 2}$ and $\det(A)=\pm 1$, such that equality \eqref{hX} holds true for $X_\Phi$ and \eqref{hb} reads as
\beq\label{hal}
\forall\,x\in Y_2,\quad \widehat{\alpha}(x)\,\zeta=\nabla\Psi(\Psi^{-1}(x))\,\Phi(\Psi^{-1}(x)).
\eeq

\par
First of all, since the probability measure $\sigma(y)\,dy$ is invariant for the flow $X_\Phi$, by \eqref{ChbCb} and the equality $\widehat{\Phi}=\widehat{\alpha}\,\zeta$, there exist a non empty compact interval $I$ of $\R$ such that
\[
\overline{\sigma\,\Phi}\in\sfC_\Phi=A^{-1}\,\sfC_{\widehat{\Phi}}=A^{-1}\,(I\,\zeta)=I\,(A^{-1}\zeta),
\]
which, due to $A^{-1}\in\Z^{2\times 2}$, implies that $\zeta$ is incommensurable in $\R^2$.
\par
On the other hand, in view of \eqref{hal} it is easy to see that the flow $\widehat{X}$ defined by
\beq\label{hXPsi}
\widehat{X}(t,x):=\Psi\big(X(t,\Psi^{-1}(x))\big)\quad\mbox{for }(t,x)\in\R\times Y_2,
\eeq
is the flow associated with the vector field
\beq\label{hbha}
\widehat{b}=\widehat{a}\,\zeta\quad\mbox{where}\quad \widehat{a}(x):=a(\Psi^{-1}(x))\,\widehat{\alpha}(x)\;\;\mbox{for }x\in Y_2.
\eeq
Now, let $\mu$ be a probability measure on $Y_2$ and let $\widehat{\mu}$ be the pushforward measure of $\mu$ by $\Psi$.
Then, by equality \eqref{hXPsi} we have for any function $f\in L^1_\sharp(Y_2,\mu)$ and $\widehat{f}:=f\circ\Psi^{-1}$,
\beq\label{fhf}
\forall\,t\in\R,\quad
\left\{\ba{rl}
\dis \int_{Y_2} f(X(t,y))\,d\mu(y) & \dis = \int_{Y_2} \widehat{f}(\widehat{X}(t,x))\,d\widehat{\mu}(x)
\\ \ecart
\dis \int_{Y_2} \big|f(X(t,y))-f(y)\big|\,d\mu(y) & \dis = \int_{Y_2} \big|\widehat{f}(\widehat{X}(t,x))-\widehat{f}(x)\big|\,d\widehat{\mu}(x).
\ea\right.
\eeq
The first equality of \eqref{fhf} shows that the probability measure $\mu$ is invariant for the flow $X$ if, and only if, the probability measure $\widehat{\mu}$ is invariant for the flow $\widehat{X}$.
The second equality of \eqref{fhf} shows that the probability measure $\mu$ is ergodic for the flow $X$ (recall definition \eqref{Xerg}) if, and only if, the probability measure $\widehat{\mu}$ is ergodic for the flow $\widehat{X}$.
Therefore, since by \eqref{hal} the function $\widehat{\alpha}$ does not vanish in $Y_2$, from the first case applied with the Stepanoff flow $\widehat{X}$ we deduce that for any invariant probability measure $\mu$ in $\cI_{a\,\Phi}$,
\[
\mu(\{a=0\})=\widehat{\mu}\left(\big\{a\circ\Psi^{-1}=0\big\}\right)=\widehat{\mu}\left(\big\{(a\circ\Psi^{-1})\,\widehat{\alpha}=0\big\}\right)
=\widehat{\mu}(\{\widehat{a}=0\})=1,
\]
which concludes the proof of Theorem~\ref{thm.Ssgn}.
\end{proof}
\subsection{A complete picture of the Stepanoff flow}\label{ss.Sflow}
It turns out that the Stepanoff flows allow us to illustrate the four cases of Proposition~\ref{pro.Cb2d} as well as the results of Section~\ref{ss.morerotset}.
To this end, consider a vector field $b=a\,\zeta$ in $C^1_\sharp(Y_2)^2$, where $a$ is a function in $C^1_\sharp(Y_2)$ and $\zeta$ is a unit vector in $\R^2$.
\begin{enumerate}
\item 
Assume that the function $a$ has a constant sign in $Y_2$, for instance $a\geq 0$ in $Y_2$.
\par\noindent
On the one hand, when the vector $\zeta$ is commensurable in $\R^2$, {\em i.e.} $T\,\zeta\in\Z^2$ for some $T>0$, we have the following alternative:
\begin{itemize}
\item If the function $a(\cdot\,\zeta+x)$ does not vanish in $\R$, it is easy to check that
\beq\label{Fx}
X(t,x)=F_x^{-1}(t)\,\zeta+x,\quad \mbox{where}\quad F_x(t):=\int_0^t {ds\over a(s\,\zeta+x)}\quad \mbox{for }t\in\R,
\eeq
and $F_x^{-1}$ denotes the reciprocal of the function $F_x$.
Therefore, we get that
\beq\label{asyXa>0}
\lim_{t\to\infty} \frac{X(t,x)}{t}=m(x)\,\zeta,\quad\mbox{where}\quad m(x):=\left({1\over T}\int_0^T{dt\over a(t\,\zeta+x)}\right)^{-1}\in(0,\infty).
\eeq
The limit $m(x)$ in \eqref{asyXa>0} is deduced from the $T$-periodicity of $a(\cdot\,\zeta+x)$ (due to $T\,\zeta\in\Z^2$ and the $\Z^2$-periodicity of $a$).
\item If $a(x)=0$, then we have $X(\cdot,x)=x$. Otherwise, if $a(x)\neq 0$ and the function $a(\cdot\,\zeta+x)$ does vanish in $\R$, then the new function $F_x$ obtained by a similar expression as the one of \eqref{Fx}, is defined not on $\R$ but on the bounded interval $(\al_x,\be_x)$ of $\R$, where $\al_x<0$, respectively $\be_x>0$, is the largest negative, respectively the smallest positive, root of the $T$-periodic function $a(\cdot\,\zeta+x)$.
More precisely, we have
\beq\label{Fxze}
\left\{\ba{lcl}
& \dis X(t,x)=F_x^{-1}(t)\,\zeta+x & \mbox{for }t\in\R,
\\ \ecart
\mbox{where} & \dis F_x(u):=\int_0^u {ds\over a(s\,\zeta+x)} & \mbox{for }u\in(\al_x,\be_x).
\ea\right.
\eeq
Hence, the reciprocal $F_x^{-1}$ maps $\R$ on the bounded interval $(\al_x,\be_x)$, so that the trajectory $X(\cdot,x)$ is bounded in $\R^2$.
Therefore, we get that
\beq\label{asyXa=0}
\dis \lim_{t\to\infty} \frac{X(t,x)}{t}=0_{\R^2}.
\eeq
\end{itemize}
\noindent
On the other hand, when the vector $\zeta$ is incommensurable in $\R^2$, by virtue of \cite[Theorem~3.1]{BrHe1} (see also \cite[Proposition~5.4]{BrHe2}) we have
\beq\label{CbSincom}
\sfC_{a\,\zeta}:=\left\{\ba{rl}
\{\underline{a}\}\,\zeta& \mbox{if $a>0$ in $Y_2$}
\\ \ecart
[0,\underline{a}]\,\zeta & \mbox{if $a$ does vanish in $Y_2$},
\ea\right.
\quad\mbox{where}\quad \underline{a}:=\left(\int_{Y_2}{dy\over a(y)}\right)^{-1}\in[0,\infty).
\eeq
More precisely, when $a>0$ in $Y_2$, the unit set $\{\underline{a}\}\,\zeta$ is also deduced from formula~\eqref{Fx}.
Then, we approximate the continuous function $1/a$ uniformly on~$Y_2$ by Fej\'er's type trigonometric polynomials, and we notice that
\[
\forall\,k\in \Z^2\setminus\{0_{\R^2}\},\quad \lim_{t\to \infty}\left({1\over t}\int_0^t e^{-2i\pi\,s\,\zeta\cdot k}\,ds\right)=0.
\]
On the contrary, when the function $a$ does vanish in $Y_2$, we may apply the perturbation result \cite[Theorem~3.1]{BrHe1} with the sequence $a_n:=a+1/n>0$ for $n\geq 1$, since by the previous case $\sfC_{a_n\,\zeta}=\{\underline{a_n}\}\,\zeta$ and by Beppo-Levi's theorem
\[
\lim_{n\to\infty}\,\underline{a_n}=\lim_{n\to\infty}\left(\int_{Y_2}{dy\over a(y)+1/n}\right)^{-1}=\underline{a}\in[0,\infty).
\]
\par\medskip
Therefore, collecting limits \eqref{asyXa>0}, \eqref{asyXa=0} combined with \eqref{rhob}, \eqref{rhobCb}, and the incommensurable case \eqref{CbSincom}, we obtain the complete characterization of the Herman rotation set $\sfC_b$ for the Stepanoff flow, when the function $a$ has a constant sign: 
\beq\label{CbS}
\sfC_{a\,\zeta}=\left\{\ba{rl}
\dis \big[\min_{Y_2}m,\max_{Y_2}m\big]\,\zeta & \mbox{if $\zeta$ is commensurable in $\R^2$}
\\
\{\underline{a}\}\,\zeta & \mbox{if $\zeta$ is incommensurable and $a>0$ in $Y_2$}
\\ \ecart
[0,\underline{a}]\,\zeta & \mbox{if $\zeta$ is incommensurable and $a$ does vanish in $Y_2$},
\ea\right.
\eeq
with the convention
\beq\label{acon}
\left\{\ba{rl}
\underline{a}:=0 & \mbox{if }1/a\notin L^1_\sharp(Y_2)
\\ \ecart
m(x):=0 & \mbox{if $a(\cdot\,\zeta+x)$ does vanish in }\R.
\ea\right.
\eeq
Note that in the first case of \eqref{CbS}, the whole rotation set $\sfC_b$ may be represented thanks to the asymptotics of the flow $X$, namely we have
\beq\label{repCbcom}
\forall\,\mu\in\cI_b,\ \exists\,x\in Y_2,\quad \mu(b)=\lim_{t\to\infty}\,{X(t,x)\over t}.
\eeq
The situation is less clear in the third case of \eqref{CbS}.
\item Now, assume that the function $a$ does change sign in $Y_2$, which in particular implies that $a$ does vanish in $Y_2$ (with possibly an infinite number of roots).
\par
The case where $\zeta$ is commensurable is quite similar.
Indeed, using the explicit formula \eqref{Fxze} of the flow $X$, the first formula of \eqref{CbS} for $\sfC_b$ still holds.
\par
On the contrary, applying Theorem~\ref{thm.Ssgn} with $\Phi=\zeta$, the case where $\zeta$ is incommensurable leads us to the null asymptotics of the flow, or equivalently by \eqref{Cbsindis}, to $\sfC_b=\{0_{\R^2}\}$.
\end{enumerate}
\par\medskip\noindent
Let us now illustrate the cases $({\rm I})$, $({\rm II})$ $({\rm III})$, $({\rm IV})$ of Proposition~\ref{pro.Cb2d} thanks to Stepanoff's flows with suitable functions $a$.
\noindent
\begin{itemize}
\item[$({\rm I})$] Let $\zeta:=e_1$, and define $a(x):=a_1(x_1)\,a_2(x_2)$ for $x=(x_1,x_2)\in Y_2$, where $a_1$ is a positive function in $C^1_\sharp(Y_1)$ and $a_2$ is a non constant function in $C^1_\sharp(Y_1)$ with a finite positive number of roots in $Y_1$ (so that $a$ has a constant sign).
Then, the function $m$ defined in \eqref{asyXa>0} is given by
\[
m(x)=a_2(x_2)\left(\int_0^1{dt\over a_1(t+x_1)}\,dt\right)^{-1}=\underline{a_1}\,a_2(x_2)\quad\mbox{for }x\in Y_2,
\]
where $\underline{a_1}$ denotes the harmonic mean of $a_1$ on $Y_1$.
Therefore, from the first case of \eqref{CbS} we deduce that
\[
\sfC_{a\,e_1}=\big[\min_{Y_1}a_2,\max_{Y_1}a_2\big]\,\underline{a_1}\,e_1,\quad\mbox{with}\quad \min_{Y_1}a_2<\max_{Y_1}a_2.
\]
\item[$({\rm II})$] We take again the previous example, but assuming this time that the function $a_2$ is a constant function $c_2\in\R$.
Therefore, we deduce that
\[
\sfC_{a\,e_1}=\{\underline{a_1}\,c_2\}\,e_1.
\]
\item[$({\rm III})$] Let $\zeta$ be an incommensurable vector in $\R^2$, and let $a$ be any positive (or negative) function in $C^1_\sharp(Y_2)$.
Therefore, from the second case of \eqref{CbS} we deduce that
\[
\sfC_{a\,e_1}=\{\underline{a}\}\,\zeta.
\]
\item[$({\rm IV})$] Let $\zeta$ be an incommensurable vector in $\R^2$, and let $a$ be any vanishing non negative (or non positive) function in  $C^1_\sharp(Y_2)$ with a finite number of roots in $Y_2$.
Therefore, from the third case of \eqref{CbS} we deduce that that $\sfC_{a\,\zeta}$ is the closed line segment
\[
\sfC_{a\,\zeta}=[0,\underline{a}]\,\zeta.
\]
An example of such a function $a$ is given by
\[
a(x):=\big(\sin^2(\pi x_1)+\sin^2(\pi x_2)\big)^\alpha,\quad x=(x_1,x_2)\in Y_2,\quad\mbox{for any }\al\in(1/2,\infty),
\]
which satisfies
\[
\underline{a}=\left\{\ba{cl}
\in(0,\infty) & \mbox{if }\al\in(1/2,1)
\\
0 & \mbox{if }\al\geq 1.
\ea\right.
\]
\end{itemize}
\par\bigskip
Let us conclude this section by a remark on the necessary regularity restriction of the vector field $b$ under the incommensurable case $(c)$ of the Franks-Misiurewicz Theorem~\ref{thm.FM}.
\begin{arem}\label{rem.Sinc}
Theorem~\ref{thm.Ssgn} shows that the incommensurable case of the Stepanoff flow associated with a vector field $a\,\zeta$, with $a\in C^1_\sharp(Y_2)$ and $\zeta$ incommensurable in $\R^2$, is fully characterized by the four following situations:
\beq\label{CbSinc}
\sfC_{a\,\zeta}=\left\{\ba{cl}
\{0_{\R^2}\} & \mbox{if $a$ changes sign in $Y_2$}
\\ \ecart
\{0_{\R^2}\} & \mbox{if $a$ has a constant sign and does vanish in $Y_2$, with }1/a\notin L^1_\sharp(Y_2)
\\ \ecart
[0,\underline{a}]\,\zeta & \mbox{if $a$ has a constant sign and does vanish in $Y_2$, with }1/a\in L^1_\sharp(Y_2)
\\ \ecart
\{\underline{a}\}\,\zeta & \mbox{if $a$ does not vanish in $Y_2$}.
\ea\right.
\eeq
Therefore, the case $(c)$ of Theorem~\ref{thm.FM} holds only when $a$ does vanish and has a constant sign in $Y_2$ with $1/a\in L^1_\sharp(Y_2)$.
This condition is rather restrictive, since it cannot be satisfied by a function $a\in C^2_\sharp(Y_2)$.
Indeed, if for instance the non negative function $a\in C^2_\sharp(Y_2)$ vanishes at the point $0_{\R^2}$, then we have $\nabla a(0_{\R^2})=0_{\R^2}$, and we deduce that there exist $R>0$ and $C>0$ satisfying
\beq\label{1/a2d}
\int_{Y_2}{dx\over |a(x)|}\geq \int_{\{|x|<R\}}{dx\over |a(x)|}\geq \int_{\{|x|<R\}}{C\over |x|^2}\,dx= 2\pi\,C\int_0^R{dr\over r}=\infty.
\eeq
Beyond the Stepanoff flows class, the same restriction applies in the more general framework of Proposition~\ref{pro.FMincom} $(ii)$.
Indeed, in this setting the vector field $b$ reads as $b=a\,\Phi$, where $\Phi$ is a non vanishing vector field, and where $a=1/\sigma$ with $\sigma(x)\,dx$ an invariant probability measure in $\cI_b$, is a vanishing non negative function which cannot belong to $C^2_\sharp(Y_2)$ due to \eqref{1/a2d}.
\end{arem}
\section{Fourier relations satisfied by invariant measures}\label{s.intFrel}
The following result provides an integral relation and equivalent Fourier relations satisfied by any pair of invariant probability measures for the flow \eqref{bX}. As a by product the Fourier relations \eqref{Fmubnub} for $j=k$ extend the Franks-Misiurewicz collinearity result~\eqref{FMcol}. 
\begin{atheo}\label{thm.relmunu}
Let $\mu$ and $\nu$ be two invariant probability measures for the flow \eqref{bX} associated with a vector field $b\in C^1_\sharp(Y_2)^2$.
Then, there are unique (up to additive constants) stream functions $u^\sharp,v^\sharp$ in $BV_\sharp(Y_2)$ defined by the vector-valued measure representations
(see Remark~\ref{rem.mubDdu} just below)
\beq\label{bmuus}
b\,\mu=\mu(b)+R_\perp\nabla u^\sharp\quad\mbox{and}\quad b\,\nu=\nu(b)+R_\perp\nabla v^\sharp\quad\mbox{in }Y_2.
\eeq
Moreover, for any function $\rho\in C^2_\sharp(Y_2\!\times\!Y_2)$, we have the following integral relation
\beq\label{rhomunu}
\ba{l}
\dis \int_{Y_2}\kern -.2em\int_{Y_2}\rho(x,y)\det\,(b(x),b(y))\,d\mu(x)\,d\nu(y)
\\ \ecart
\dis =R_\perp\nu(b)\cdot\int_{Y_2}\kern -.2em\left(\int_{Y_2}\rho(x,y)\,dy\right)b(x)\,d\mu(x)
-R_\perp\mu(b)\cdot\int_{Y_2}\kern -.2em\left(\int_{Y_2}\rho(x,y)\,dx\right)b(y)\,d\nu(y)
\\ \ecart
\dis +\int_{Y_2}\kern -.2em\int_{Y_2}\kern -.2em\left({\partial^2\rho\over\partial x_1\partial y_2}-{\partial^2\rho\over\partial x_2\partial y_1}\right)u^\sharp(x)\,v^\sharp(y)\,dxdy,
\ea
\eeq
or equivalently, in terms of the Fourier coefficients
\beq\label{Fmubnub}
\forall\,(j,k)\in(\Z^2\setminus\{0_{\R^2}\})^2\cup\{(0_{\R^2},0_{\R^2})\},\quad \det\big(\widehat{\mu b}(j),\widehat{\nu b}(k)\big)
=-\,4\pi^2\det\,(j,k)\,\widehat{u^\sharp}(j)\,\widehat{v^\sharp}(k).
\eeq
\end{atheo}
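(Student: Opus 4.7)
The plan has three ingredients: a stream-function representation, a direct computation of Fourier coefficients, and a Fourier-series expansion of $\rho$ that bridges \eqref{Fmubnub} and \eqref{rhomunu}.

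\textbf{Stream-function step.} By Liouville's theorem (Proposition~\ref{pro.divcurl}), the vector-valued measure $\mu b - \mu(b)\,dx$ on $Y_2$ is divergence-free and has zero mean, and analogously for $\nu$. In two dimensions, every periodic divergence-free vector-valued measure with zero mean is an orthogonal gradient $R_\perp \nabla u^\sharp$ of a function $u^\sharp \in BV_\sharp(Y_2)$, unique up to an additive constant; this representation result is exactly what is established in Appendix~\ref{app.rep}. Applying it to both $\mu$ and $\nu$ yields \eqref{bmuus}.

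\textbf{Fourier identities.} From \eqref{bmuus} one reads off $\widehat{\mu b}(0_{\R^2}) = \mu(b)$ and, for $j \neq 0_{\R^2}$, $\widehat{\mu b}(j) = 2i\pi\, R_\perp j\, \widehat{u^\sharp}(j)$, and similarly for $\nu b$. For $(j,k) \in (\Z^2\setminus\{0_{\R^2}\})^2$, plugging these in gives
\[
\det\bigl(\widehat{\mu b}(j),\widehat{\nu b}(k)\bigr) = -4\pi^2\,\widehat{u^\sharp}(j)\,\widehat{v^\sharp}(k)\,\det(R_\perp j, R_\perp k) = -4\pi^2 \det(j,k)\,\widehat{u^\sharp}(j)\,\widehat{v^\sharp}(k),
\]
using $\det R_\perp = 1$. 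The remaining case $j = k = 0_{\R^2}$ reduces to $\det(\mu(b),\nu(b)) = 0$, which is exactly the Franks-Misiurewicz collinearity statement~\eqref{FMcol} recalled in Theorem~\ref{thm.FM}. This completes \eqref{Fmubnub}.

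\textbf{From Fourier to integral.} Since $\rho \in C^2_\sharp(Y_2\times Y_2)$, its Fourier series converges absolutely and a Fubini-type argument gives
\[
\int_{Y_2}\!\!\int_{Y_2} \rho(x,y) \det(b(x),b(y))\,d\mu(x)\,d\nu(y) = \sum_{j,k \in \Z^2} \widehat{\rho}(j,k)\,\det\bigl(\widehat{\mu b}(-j),\widehat{\nu b}(-k)\bigr).
\]
I would split this into four blocks according to whether $j$ and $k$ vanish. By the Fourier identities just derived, the block $(j,k) \in (\Z^2\setminus\{0_{\R^2}\})^2$ produces $-4\pi^2 \sum_{j,k\neq 0} \widehat{\rho}(j,k)\det(j,k)\widehat{u^\sharp}(-j)\widehat{v^\sharp}(-k)$, which by Parseval equals the last integral term in \eqref{rhomunu} because $(\partial^2_{x_1 y_2}-\partial^2_{x_2 y_1})\rho$ has Fourier coefficient $-4\pi^2\det(j,k)\widehat{\rho}(j,k)$. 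The $(0_{\R^2},0_{\R^2})$ block contributes $\widehat{\rho}(0,0)\det(\mu(b),\nu(b)) = 0$ by Franks-Misiurewicz. Each mixed block collapses via Fubini to a boundary-type term: for $j = 0_{\R^2}$, $k \neq 0_{\R^2}$, summing the $\widehat{\rho}(0,k)\widehat{\nu b}(-k)$ against $\mu(b)$ recovers $\int\rho_1(y) b(y)\,d\nu(y) - \widehat{\rho}(0,0)\nu(b)$ where $\rho_1(y):=\int\rho(x,y)dx$, producing $-R_\perp\mu(b)\cdot\int\rho_1\,b\,d\nu$ up to a spurious $\widehat{\rho}(0,0)\det(\mu(b),\nu(b))$ that again vanishes by Franks-Misiurewicz; the symmetric block $k=0_{\R^2}$, $j\neq 0_{\R^2}$ yields the first term of \eqref{rhomunu}.

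\textbf{Equivalence and main difficulty.} The converse implication is direct: testing \eqref{rhomunu} against the complex functions $\rho(x,y) = \exp(-2i\pi(j\cdot x + k\cdot y))$ (or their real and imaginary parts) and computing both sides recovers \eqref{Fmubnub} block by block; in particular, $\rho\equiv 1$ forces $\det(\mu(b),\nu(b))=0$, so \eqref{rhomunu} is intrinsically as strong as Franks-Misiurewicz. The main technical point is the bookkeeping in the third paragraph: when the four index-blocks are reassembled, three spurious $\widehat{\rho}(0,0)\det(\mu(b),\nu(b))$ cross-terms appear, and only the two-dimensional collinearity result~\eqref{FMcol} allows them to cancel so that the clean three-term right-hand side of \eqref{rhomunu} survives.
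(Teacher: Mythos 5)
Your approach mirrors the paper's proof: representation \eqref{bmuus} via Appendix~\ref{app.rep}, direct Fourier computation for \eqref{Fmubnub} (including the $j=k=0_{\R^2}$ case reducing to Franks-Misiurewicz), and a Fourier-expansion bridge to \eqref{rhomunu} with the same four-block bookkeeping and cancellation of $\widehat{\rho}(0,0)\det(\mu(b),\nu(b))$ terms.

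The one genuine gap is in the convergence step. You assert that since $\rho\in C^2_\sharp(Y_2\!\times\!Y_2)$ its Fourier series converges absolutely; this is false in general. The domain $Y_2\!\times\!Y_2$ is a $4$-dimensional torus, and $C^2$-regularity only gives $\widehat{\rho}(j,k)=O(|(j,k)|^{-2})$, which is not summable over $\Z^4$ (one would need decay faster than $|(j,k)|^{-4}$, i.e.\ roughly $C^k$ with $k>4$, or a Wiener-algebra condition). Without absolute summability, your ``Fubini-type'' interchange of the sum over $(j,k)$ with the integrals against $d\mu\otimes d\nu$ is not justified. The paper avoids this by first proving \eqref{rhomunu} for trigonometric polynomials $\Sigma$ — where all sums are finite and the interchange is trivial — and then extending to $C^2$ by applying Fej\'er's approximation theorem to $\rho$ and to $\partial^2_{x_1y_2}\rho-\partial^2_{x_2y_1}\rho$ simultaneously: the Fej\'er (Ces\`aro) sums converge uniformly, which is enough to pass to the limit under the bounded measures $\mu,\nu$ and under the $L^2$ pairing with $u^\sharp,v^\sharp$. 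Replacing your absolute-convergence claim with this Fej\'er argument closes the gap; everything else in your computation (the factor $-4\pi^2\det(j,k)$, $\det(R_\perp j,R_\perp k)=\det(j,k)$, and the vanishing of the spurious cross-terms by \eqref{FMcol}) is correct and matches the paper.
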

\begin{arem}\label{rem.mubDdu} 
The two equalities of \eqref{bmuus} have to be understood in the sense of the vector-valued Radon measures on $Y_2$, including an integration by parts for the orthogonal gradient term.
For instance, the first equality of \eqref{bmuus}:
\begin{itemize}
\item on the space $\R^2$, according to the definition of the Radon measure $\widetilde{\mu}\in\cM(\R^2)$ in~\eqref{tmu}, means that for any smooth function with compact support  $\Phi$ in $C^\infty_c(\R^2)^2$,
\beq\label{btmuDdu}
\dis \int_{\R^2}b(x)\cdot\Phi(x)\,\widetilde{\mu}(x)=\int_{\R^2}\mu(b)\cdot\Phi(x)\,dx+\int_{\R^2}{\rm div}(R_\perp\Phi)(x)\,u^\sharp(x)\,dx,
\eeq
\item or equivalently, on the torus $Y_2$ using \eqref{tmu}, means that for any $\Z^2$-periodic function $\Psi$ in $C^\infty_\sharp(Y_2)^2$, which is associated with a periodized function $\Psi:=\Phi_\sharp$ by \cite[Lemma~3.5]{Bri1},
\beq\label{bmuDdu}
\ba{ll}
\dis \int_{Y_2}b(x)\cdot\Psi(x)\,d\mu(x) & \dis =\int_{Y_2}\mu(b)\cdot\Psi(x)\,dx-\int_{Y_2}R_\perp\Psi(x)\cdot d\nabla u^\sharp(x)
\\ \ecart
& \dis =\int_{Y_2}\mu(b)\cdot\Psi(x)\,dx+\int_{Y_2}{\rm div}(R_\perp\Psi)(x)\,u^\sharp(x)\,dx.
\ea
\eeq
\end{itemize}
See Appendix~\ref{app.rep} below for further details.
\end{arem}
\begin{arem}\label{rem.intrel}
As an immediate consequence of relation \eqref{rhomunu}, we have
\beq\label{fpmmubnub}
\forall\,f\in C^0_\sharp(Y_2),\quad \int_{Y_2}\kern -.2em\int_{Y_2}f(x\pm y)\,\det\,(b(x),b(y))\,d\mu(x)\,d\nu(y)=0.
\eeq
Indeed, for any function $f\in C^2_\sharp(Y_2)$, by \eqref{FMcol} the function $\rho:(x,y)\mapsto f(x\pm y)$ satisfies the equalities
\[
\ba{l}
\dis R_\perp\nu(b)\cdot\int_{Y_2}\kern -.2em\left(\int_{Y_2}f(x\pm y)\,dy\right)b(x)\,d\mu(x)
-R_\perp\mu(b)\cdot\int_{Y_2}\kern -.2em\left(\int_{Y_2}f(x\pm y)\,dx\right)b(y)\,d\nu(y)
\\ \ecart
\dis =2\,\overline{f}\,\det\,(\mu(b),\nu(b))=0,
\ea
\]
and
\[
{\partial^2\rho\over\partial x_1\partial y_2}-{\partial^2\rho\over\partial x_2\partial y_1}=0\quad\mbox{in }Y_2\!\times\!Y_2.
\]
The case where the function $f$ is only continuous easily follows from a density argument.
\end{arem}
\begin{arem}\label{rem.Freld}
Actually, in any dimension $d\geq 2$ and for any invariant probability measure $\mu$ related to the flow~\eqref{bX} induced by any vector field $b\in C^1_\sharp(Y_d)^d$, we have the Fourier relations
\beq\label{Freld}
\forall\,k\in\Z^d\setminus\{0_{\R^d}\},\quad \widehat{\mu b}(k)\cdot k=0.
\eeq
Indeed, applying the divergence-curl equality \eqref{dmub=0} with measure $\mu$ and any regular function $\psi$ in $C^\infty_\sharp(Y_d)$, and using Parseval's equality, we get that
\[
0=\int_{Y_d} \nabla\psi(y)\cdot b(y)\,d\mu(y)=-\,2i\pi\sum_{k\in\Z^d}\widehat{\psi}(k)\,k\cdot \widehat{\mu b}(k),
\]
which, due to the arbitrariness of $\psi$, thus yields \eqref{Freld}.
\end{arem}
\begin{arem}\label{rem.Fcoef}
Note that the case $j=k=0_{\R^2}$ in relation \eqref{Fmubnub} corresponds to the collinearity result~\eqref{FMcol}.
Furthermore, changing $k$ by $-\,k$ for any integer vector~$k$, a by-product of relations \eqref{Fmubnub} is given by the following correlation between the Fourier coefficients of any two divergence free vector-valued measures $\mu\,b$ and $\nu\,b$,
\beq\label{Fjkcol}
\ba{c}
\forall\,j,k\in\Z^2\setminus\{0_{\R^2}\},
\\ \ecart
\det\,(j,k)=0\;\Rightarrow\;
\left\{\ba{l}\dis \det\left(\Re\big(\widehat{\mu b}(j)\big),\Re\big(\widehat{\nu b}(k)\big)\right)
=\det\left(\Im\big(\widehat{\mu b}(j)\big),\Im\big(\widehat{\nu b}(k)\big)\right)=0
\\ \ecart
\dis \det\left(\Re\big(\widehat{\mu b}(j)\big),\Im\big(\widehat{\nu b}(k)\big)\right)
=\det\left(\Im\big(\widehat{\mu b}(j)\big),\Re\big(\widehat{\nu b}(k)\big)\right)=0.
\ea\right.
\ea
\eeq
Therefore, Fourier relations \eqref{Fmubnub} and \eqref{Fjkcol} may be regarded as an extension of the Franks-Misiurewicz \cite[Theorem~1.2]{FrMi} for the continuous flows to the ODE's flows, with more substantial correlation between the invariant measures.
\par
On the other hand, since the stream functions $u^\sharp$ and $v^\sharp$ of \eqref{bmuus} belong to $L^2_\sharp(Y_2)$, from relations \eqref{Fmubnub} and Parseval's formula we also deduce the estimate
\[
\sum_{(j,k)\in \Z^2\times\Z^2,\,\det\,(j,k)\neq 0}\,\left|\,{\det\big(\widehat{\mu b}(j),\widehat{\nu b}(k)\big)\over \det\,(j,k)}\,\right|^2
\leq 16\,\pi^4\,\|u^\sharp\|^2_{L^2_\sharp(Y_2)}\, \|v^\sharp\|^2_{L^2_\sharp(Y_2)}<\infty.
\]
\end{arem}
\begin{proof}{ of Theorem~\ref{thm.relmunu}}
The proof of the classical representation \eqref{bmuus} is postponed in Appendix~\ref{app.rep}.
\par
The case $j=k=0_{\R^2}$ in \eqref{Fmubnub} corresponds exactly to the collinearity result \eqref{FMcol}.
\par\noindent
Now, let $j,k\in\Z^2\setminus\{0_{\R^2}\}$.
Taking the Fourier coefficients of the representations \eqref{bmuus} of the vector-valued measures $\mu\,b,\nu\,b$ respectively at the integer points $j,k$, and using Kronecker's symbol $\delta_{j,k}$, we get that
\[
\ba{ll}
\det\big(\widehat{\mu b}(j),\widehat{\nu b}(k)\big) &
=\big(\mu(b)\,\delta_{j,0_{\R^2}},+R_\perp\widehat{\nabla u^\sharp}(j)\big)\cdot R_\perp\big(\nu(b)\,\delta_{k,0_{\R^2}}+R_\perp\widehat{\nabla v^\sharp}(k)\big)
\\ \ecart
& \dis =\widehat{\nabla u^\sharp}(j)\cdot R_\perp\widehat{\nabla v^\sharp}(k)=-\,4\pi^2\det\,(j,k)\,\widehat{u^\sharp}(j)\,\widehat{v^\sharp}(k),
\ea
\]
which implies the Fourier relations \eqref{Fmubnub}.
\par
On the other hand, a trigonometric polynomial $\Sigma$ in $Y_2\!\times\!Y_2$ can be written as
\[
\Sigma(x,y)=\sum_{(j,k)\in J\!\times\!K}\kern -.2em c_{j,k}\,e^{-2i\pi\,(j\cdot x+k\cdot y)}\quad\mbox{for }(x,y)\in Y_2\!\times\!Y_2,
\]
where $J,K$ are any non empty finite subsets of $\Z^2$, and $c_{j,k}$ for $(j,k)\in J\!\times\!K$ are any complex numbers.
Taking into account equality \eqref{FMcol}, any trigonometric polynomial $\Sigma$ satisfies the three following equalities:
\[
\dis \int_{Y_2}\kern -.2em\int_{Y_2}\Sigma(x,y)\det\,(b(x),b(y))\,d\mu(x)\,d\nu(y)
=\sum_{(j,k)\in J\times K\setminus\{(0_{\R^2},0_{\R^2})\}}\kern -.2em c_{j,k}\,\det\big(\widehat{\mu b}(j),\widehat{\nu b}(k)\big),
\]
and
\[
\ba{l}
\dis -\,R_\perp\mu(b)\cdot\int_{Y_2}\kern -.2em\left(\int_{Y_2}\Sigma(x,y)\,dx\right)b(y)\,d\nu(y)
+R_\perp\nu(b)\cdot\int_{Y_2}\kern -.2em\left(\int_{Y_2}\Sigma(x,y)\,dy\right)b(x)\,d\mu(x)
\\ \ecart
\dis =\sum_{k\in K\setminus\{0_{\R^2}\}}c_{0_{\R^2},k}\det\big(\widehat{\mu b}(0_{\R^2}),\widehat{\nu b}(k)\big)
+\sum_{j\in J\setminus\{0_{\R^2}\}}c_{j,0_{\R^2}}\det\big(\widehat{\mu b}(j),\widehat{\nu b}(0_{\R^2})\big),
\ea
\]
and
\[
\dis \int_{Y_2}\kern -.2em\int_{Y_2}\left({\partial^2\Sigma\over\partial x_1\partial y_2}(x,y)-{\partial^2\Sigma\over\partial x_2\partial y_1}(x,y)\right)u^\sharp(x)\,v^\sharp(y)\,dxdy
=-\,4\pi^2\kern -.2em\sum_{(j,k)\in J\!\times\!K}\kern -.2em c_{j,k}\,\det\,(j,k)\,\widehat{u^\sharp}(j)\,\widehat{v^\sharp}(k).
\]
These equalities combined with relations \eqref{Fmubnub} yield
\beq\label{Sigma}
\ba{l}
\ba{ll}
\dis \int_{Y_2}\kern -.2em\int_{Y_2}\Sigma(x,y)\det\,(b(x),b(y))\,d\mu(x)\,d\nu(y)
& \kern -.6em \dis +\,R_\perp\mu(b)\cdot\int_{Y_2}\kern -.2em\left(\int_{Y_2}\Sigma(x,y)\,dx\right)b(y)\,d\nu(y)
\\ \ecart
& \kern -.6em \dis -\,R_\perp\nu(b)\cdot\int_{Y_2}\kern -.2em\left(\int_{Y_2}\Sigma(x,y)\,dy\right)b(x)\,d\mu(x)
\ea
\\ \ecart
\dis =\sum_{(j,k)\in J\setminus\{0_{\R^2}\}\times K\setminus\{0_{\R^2}\}}\kern -.2em c_{j,k}\,\det\big(\widehat{\mu b}(j),\widehat{\nu b}(k)\big)
\\ \ecart
\dis =-\,4\pi^2\kern -.2em\sum_{(j,k)\in J\setminus\{0_{\R^2}\}\times K\setminus\{0_{\R^2}\}}\kern -.2em c_{j,k}\,\det\,(j,k)\,\widehat{u^\sharp}(j)\,\widehat{v^\sharp}(k)
\\ \ecart
\dis =-\,4\pi^2\kern -.2em\sum_{(j,k)\in J\times K}\kern -.2em c_{j,k}\,\det\,(j,k)\,\widehat{u^\sharp}(j)\,\widehat{v^\sharp}(k)
\\ \ecart
\dis =\int_{Y_2}\kern -.2em\int_{Y_2}\left({\partial^2\Sigma\over\partial x_1\partial y_2}(x,y)-{\partial^2\Sigma\over\partial x_2\partial y_1}(x,y)\right)u^\sharp(x)\,v^\sharp(y)\,dxdy.
\ea
\eeq
Hence, any trigonometric polynomial $\Sigma$ satisfies relation \eqref{rhomunu}.
Therefore, applying Fej\'er's approximation theorem to the continuous functions $\rho$ and ${\partial^2\rho\over\partial x_1\partial y_2}\!-\!{\partial^2\rho\over\partial x_2\partial y_1}$ in $Y_2\!\times\!Y_2$, we deduce from \eqref{Sigma} that any function $\rho\in C^2(Y_2\!\times\!Y_2)$ satisfies the integral relation \eqref{rhomunu}.
\par
The proof of Theorem~\ref{thm.relmunu} is now complete.
\end{proof}
\par\bigskip
The Fourier relations \eqref{Fmubnub} are not satisfied in general for any pair of vectors $(j,k)\in\Z^2$.
Surprisingly, the validity of the Fourier relations \eqref{Fmubnub} adding all the pairs $(j,0_{\R^2})$ and $(0_{\R^2},k)$ with $j,k\in\Z^2\setminus\{0_{\R^2}\}$, characterizes  a subclass of two-dimensional Stepanoff flows as shows the following result.
\begin{apro}\label{pro.0hmub}
Let $b\in C^1_\sharp(Y_2)^2$ be a vector field which has a finite number of roots in $Y_2$, with $1/|b|\in L^1_\sharp(Y_2)$, and such that there exists an invariant probability measure $m\in\cI_b$ for the flow \eqref{bX}, with $m(b)$ incommensurable in $\R^2$. Then, we have the following equivalence
\beq\label{eqFmubnub}
\ba{c}
\forall\,\mu,\nu\in\cI_b,\ \forall\,j,k\in\Z^2,\quad \det\big(\widehat{\mu b}(j),\widehat{\nu b}(k)\big)=-\,4\pi^2\det\,(j,k)\,\widehat{u^\sharp}(j)\,\widehat{v^\sharp}(k)
\\ \ecart
\Updownarrow
\\ \ecart
\dis \exists\,a\in C^1_\sharp(Y_2)\mbox{ with constant sign such that}\quad {1\over a}\in L^1_\sharp(Y_2)\quad\mbox{and}\quad b=a\,m(b)\mbox{ in }Y_2.
\ea
\eeq
\end{apro}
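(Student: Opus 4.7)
The plan is to extract structural information on $b$ from the extra Fourier identities that the forward hypothesis provides at the pairs $(j,0_{\R^2})$ with $j\neq 0_{\R^2}$: these will force all Fourier coefficients $\widehat{\mu b}(j)$ to be colinear with $m(b)$, and a Lebesgue decomposition of the distinguished measure $m$ will upgrade this spectral alignment into a pointwise colinearity $b \parallel m(b)$. The reverse implication is then a short verification.

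Set $\zeta := m(b)$. Specializing the hypothesized identity to $\nu = m$, $k = 0_{\R^2}$ and arbitrary $\mu \in \cI_b$, $j \in \Z^2$, the factor $\det(j,0_{\R^2})$ annihilates the right-hand side, leaving $\det(\widehat{\mu b}(j),\zeta) = 0$; as $\zeta$ is incommensurable and nonzero, every $\widehat{\mu b}(j)$ is parallel to $\zeta$, so each vector-valued measure $\mu b$ is parallel to $\zeta$. Taking $\mu = m$ in the representation $m b = m(b) + R_\perp \nabla u^\sharp_m$ from Theorem~\ref{thm.relmunu} and dotting with $R_\perp \zeta$ gives $\zeta\cdot\nabla u^\sharp_m = 0$; expanding in Fourier series yields $(j\cdot\zeta)\widehat{u^\sharp_m}(j) = 0$, and the incommensurability of $\zeta$ forces $\widehat{u^\sharp_m}(j) = 0$ for every $j\neq 0_{\R^2}$, so $u^\sharp_m$ is constant. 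Consequently $b\,m = \zeta\,dx$ as vector-valued measures on $Y_2$. Writing the Lebesgue decomposition $m = f\,dx + m_s$ with $m_s$ singular, the identity $(bf-\zeta)\,dx + b\,m_s = 0$ separates into $bf = \zeta$ a.e.\ and $b = 0$ $m_s$-a.e.; since $\zeta \neq 0_{\R^2}$, one has $f>0$ a.e., hence $b(x)\parallel\zeta$ for a.e.\ $x\in Y_2$, and then for \emph{every} $x\in Y_2$ by continuity of $b$. Dividing by a nonzero component of $\zeta$ produces a function $a\in C^1_\sharp(Y_2)$ with $b = a\,\zeta = a\,m(b)$; the identity $|a|\,|\zeta| = |b|$ transfers the hypothesis $1/|b|\in L^1_\sharp(Y_2)$ to $1/a\in L^1_\sharp(Y_2)$. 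To preclude a sign change of $a$, apply Theorem~\ref{thm.Ssgn} with the constant vector field $\Phi = \zeta$ and density $\sigma\equiv 1$ (so that $\sigma\Phi = \zeta$ is divergence-free with incommensurable mean value): were $a$ to change sign, every invariant probability measure would be supported on $\{a = 0\}$, forcing $m(b) = 0_{\R^2}$, a contradiction.

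For the converse, assume $b = a\,\zeta$ with $\zeta = m(b)$, $a$ of constant sign and $1/a\in L^1_\sharp(Y_2)$. For any $\mu\in\cI_b$ the vector-valued measure $b\mu = (a\,d\mu)\,\zeta$ is parallel to $\zeta$, so every $\widehat{\mu b}(j)$ is parallel to $\zeta$ and $\det(\widehat{\mu b}(j),\widehat{\nu b}(k)) = 0$ for all pairs $(j,k)$. The same dotting argument applied to $\mu b - \mu(b)\,dx = R_\perp\nabla u^\sharp$ produces $\zeta\cdot\nabla u^\sharp = 0$ and $\widehat{u^\sharp}(j) = 0$ for $j\neq 0_{\R^2}$, so the right-hand side $-4\pi^2\det(j,k)\,\widehat{u^\sharp}(j)\,\widehat{v^\sharp}(k)$ vanishes as well (either $j=0_{\R^2}$ annihilates $\det(j,k)$, or $j\neq 0_{\R^2}$ annihilates $\widehat{u^\sharp}(j)$). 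The main subtlety of the whole argument is the passage from the measure-level identity $bm = \zeta\,dx$ to the pointwise $b\parallel\zeta$: the Lebesgue decomposition of $m$ is essential to extract the a.e.\ colinearity, and the continuity of $b$ is what upgrades it to an everywhere colinearity strong enough to yield the factorization $b = a\,m(b)$ with $a\in C^1_\sharp(Y_2)$.
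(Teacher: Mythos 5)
Your proof is correct, and both directions take a route genuinely different from the paper's.

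In the forward direction, the common first step is identical: the Fourier relation with one index equal to $0_{\R^2}$ forces $\det(\widehat{m b}(k), m(b)) = 0$, and the incommensurability of $m(b)$ kills all nonzero Fourier modes of $u^\sharp_m$, giving $b\,m = m(b)\,dx$. From there you diverge. The paper extracts the pointwise colinearity $b \parallel m(b)$ by testing the measure identity against $\varphi\,b/(|b|^2 + \varepsilon)$ and letting $\varepsilon\to 0$ (dominated convergence being justified by $1/|b|\in L^1_\sharp(Y_2)$), which simultaneously identifies the Lebesgue density of $m$ on $\{b\neq 0_{\R^2}\}$; you instead invoke the Lebesgue decomposition $m = f\,dx + m_s$ and the uniqueness of that decomposition for vector measures to split $b\,m = m(b)\,dx$ into $bf = m(b)$ a.e.\ and $b\,m_s = 0_{\R^2}$. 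Your route is arguably more transparent, while the paper's regularization gives the explicit density $b\cdot m(b)/|b|^2$ as a byproduct. Note that your appeal to Theorem~\ref{thm.Ssgn} for the constant sign of $a$ is sound but superfluous: your own step already gives $f>0$ a.e., hence $a = 1/f > 0$ a.e., hence $a \geq 0$ everywhere by continuity, so the sign is automatic.

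In the reverse direction the contrast is sharper. The paper identifies the finite set of ergodic measures of the Stepanoff flow (via external references), reduces by bilinearity to ergodic pairs, and verifies the relations case by case, including the Dirac masses at the roots. You bypass all of this: for \emph{any} $\mu\in\cI_b$, the structure $b = a\,\zeta$ immediately makes every $\widehat{\mu b}(j)$ parallel to $\zeta$ (so the left-hand determinant vanishes) and, by the same orthogonal-dotting-and-incommensurability argument, forces $u^\sharp$ to be constant (so the right-hand side vanishes). This is more economical and avoids the classification of ergodic measures altogether, which is a genuine simplification.
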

\begin{proof}{ of Proposition~\ref{pro.0hmub}}
Assume that the Fourier relations of \eqref{eqFmubnub} hold.
Let $u^\sharp$ be the stream function in $L^2_\sharp(Y_2)$ associated with $m$, {\em i.e.} by \eqref{bmuus} satisfying
\beq\label{mdu}
m\,b=m(b)+R_\perp\nabla u^\sharp\quad\mbox{in }Y_2.
\eeq
Taking $\mu=\nu=m$ and $j=0_{\R^2}$ in the first assertion of \eqref{eqFmubnub}, we get that
\[
\forall\,k\in\Z^2\setminus\{0_{\R^2}\},\quad \det\big(m(b),\widehat{m b}(k)\big)=-\,2i\pi\,\widehat{u^\sharp}(k)\,m(b)\cdot R_\perp k=0.
\]
This combined with the incommensurability of $m(b)$ yields
\[
\forall\,k\in\Z^2\setminus\{0_{\R^2}\},\quad \widehat{u^\sharp}(k)=0,
\]
which implies that $u^\sharp$ is constant in $Y_2$.
Hence, we deduce from \eqref{mdu}  that $m\,b=m(b)$ in $Y_2$.
Then, we have for any function $\ph\in C^0_\sharp(Y_2)$ and for any $\ep>0$,
\[
\int_{Y_2}\left({\ph(x)\,b(x)\over |b(x)|^2+\ep}\right)\cdot b(x)\,dm(x)=\int_{Y_2}\left({\ph(x)\,b(x)\over |b(x)|^2+\ep}\right)\cdot m(b)\,dx.
\]
Passing to the limit as $\ep\to 0$ in the previous equality and using Lebesgue's theorem with respect to measures $dm(x)$ and $dx$ (recall that $1/|b|\in L^1_\sharp(Y_2)$),
we get that for any $\ph\in C^0_\sharp(Y_2)$,
\[
\int_{Y_2}\ph(x)\,\cha_{\{b\neq 0_{\R^2}\}}(x)\,dm(x)=\int_{Y_2}\ph(x)\,{b(x)\cdot m(b)\over |b(x)|^2}\,\cha_{\{b\neq 0_{\R^2}\}}(x)\,dx,
\]
or equivalently,
\[
\cha_{\{b\neq 0_{\R^2}\}}(x)\,dm(x)={b(x)\cdot m(b)\over |b(x)|^2}\,\cha_{\{b\neq 0_{\R^2}\}}(x)\,dx\quad\mbox{in }Y_2.
\]
Multiplying by $b(x)$ the previous equality and using that $b(x)\,dm(x)=m(b)\,dx$ in $Y_2$, it follows that
\beq\label{bmb}
m(b)={b(x)\cdot m(b)\over |b(x)|^2}\,b(x)\quad\mbox{for any }x\in\{b\neq 0_{\R^2}\}.
\eeq
Therefore, since by hypothesis $b$ has a number finite of roots in $Y_2$ and $m(b)\neq 0_{\R^2}$, we deduce from \eqref{bmb} that the function $a$ defined by
\[
a(x):=\left\{\ba{cl}
\dis {|b(x)|^2\over b(x)\cdot m(b)} & \mbox{if }b(x)\neq 0_{\R^2}
\\
0 &  \mbox{if }b(x)=0_{\R^2},
\ea\right.
\]
satisfies $b = a\,m(b)$ in $Y_2$, belongs to $C^1_\sharp(Y_2)$ since $b$ does, has a constant sign, and
\[
{1\over |a|}\leq {|m(b)|\over |b|}\in L^1_\sharp(Y_2).
\]
\par
Conversely, assume that the vector field $b$ reads as $b=a\,\xi$, where $a$ is a non negative (for instance) function in $C^1_\sharp(Y_2)$ having a finite number of roots in $Y_2$ with $1/a\in L^1_\sharp(Y_2)$, and where $\xi$ is an incommensurable vector in $\R^2$.
Then, by virtue of \cite{Oxt1,Mar} and more precisely by \cite[Proposition~5.1, Remark~5.2]{BrHe2}), the set $\cE_b$ of the ergodic measure of the associated so-called Stepanoff flow~\cite{Ste} is finite and is given by
\[
\cE_b=\big\{m(dx)=\underline{a}/a(x)\,dx\big\}\cup\big\{\delta_x:a(x)=0\big\},
\]
where $m$ is also the unique invariant probability measure on $Y_2$ which does not load the zero set of $a$.
Since by \eqref{IbEb} each probability measure in $\cI_b$ is a convex combination of elements of the finite set $\cE_b$, it is enough to show that any pair $(\mu,\nu)$ of measures in $\cE_b$ satisfies the Fourier relations \eqref{eqFmubnub}.
It is immediate if at least one of the two measures in the pair $(\mu,\nu)$, for instance $\mu$, is a Dirac measure $\delta_x$ with $a(x)=0$.
Indeed, we then have $\delta_x\,b=0_{\R^2}$, so that $\widehat{\mu\,b}=0$.
Otherwise, we have $\mu=\nu=m$.
Therefore, since $m\,b=\underline{a}\,\xi=m(b)$, the associated stream function $u^\sharp$ defined by \eqref{mdu} is constant in $Y_2$, so that the Fourier relations satisfy
\[
\ba{lrll}
\forall\,j,k\in\Z^2,\quad & \dis \det\big(\widehat{mb}(j),\widehat{mb}(k)\big) & = \det\big(m(b)\,\delta_{j,0_{\R^2}},m(b)\,\delta_{k,0_{\R^2}}\big) & =0
\\ \ecart
& -\,4\pi^2\det\,(j,k)\,\widehat{u^\sharp}(j)\,\widehat{u^\sharp}(k) & =-\,4\pi^2\det\,(j,k)\,\delta_{j,0_{\R^2}}\,\delta_{k,0_{\R^2}}\,(\overline{u^\sharp})^2 & =0,
\ea
\]
which yields the first assertion of \eqref{eqFmubnub}. This concludes the proof of Proposition~\ref{eqFmubnub}.
\end{proof}
\newpage
\section{Dimension two {\em versus} dimension three}\label{s.3d}
The situation for the three-dimensional ODE's flow is radically different, since we have the following result.
\begin{atheo}\label{thm.Cb3d}
For any closed convex polyhedron $\P$ of $\R^3$ with rational vertices, there exists a vector field $b\in C^1_\sharp(Y_3)^3$ such that the associated Herman rotation set $\sfC_b$ agrees with $\P$.
\end{atheo}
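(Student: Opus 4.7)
The plan is to realize each rational vertex $v_i$ of $\P$ as the rotation vector $\mu_i(b)$ of an attracting periodic orbit $\gamma_i$ of $b$, and to arrange that $\bigcup_i \gamma_i$ is a global attractor for the flow. This will force every invariant probability measure to be a convex combination of the uniform arclength measures $\mu_i$ on the $\gamma_i$, whence $\sfC_b = \mbox{\rm conv}(v_1,\ldots,v_N) = \P$ by representation \eqref{CbEb}.

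Concretely, write $\P = \mbox{\rm conv}(v_1,\dots,v_N)$ and pick for each $i$ an integer vector $k_i \in \Z^3$ and $T_i > 0$ with $v_i = k_i/T_i$. First I would choose pairwise disjoint smooth simple closed curves $\gamma_1,\dots,\gamma_N$ in $Y_3$, with $\gamma_i$ representing the homotopy class $k_i \in \pi_1(Y_3) = \Z^3$; such disjointness can be arranged because one-dimensional curves can always be perturbed to be disjoint in any three-manifold. Parametrize $\gamma_i$ by a lift $\widetilde\gamma_i:[0,T_i] \to \R^3$ with $\widetilde\gamma_i(T_i) - \widetilde\gamma_i(0) = k_i$, and impose that $b$ restricted to $\gamma_i$ equals $\widetilde\gamma_i'$, so that the flow on $\gamma_i$ is periodic of period $T_i$ and the unique invariant probability measure $\mu_i$ on $\gamma_i$ satisfies $\mu_i(b) = \frac{1}{T_i}\int_0^{T_i}\widetilde\gamma_i'(t)\,dt = k_i/T_i = v_i$.

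Next I would extend $b$ to pairwise disjoint tubular neighborhoods $U_i \supset \gamma_i$ so that each $\gamma_i$ becomes a hyperbolically stable limit cycle (using a standard transverse normal form making the Poincar\'e map a linear contraction). Then extend $b$ to a non-vanishing smooth vector field on the complement $K := Y_3 \setminus \bigcup_i U_i$ that points strictly across $\partial K$ into $\bigcup_i U_i$ and has no non-wandering point in the interior of $K$. Such an extension exists because $K$ is a compact three-manifold with toroidal boundary and $\chi(K) = 0$, and standard Morse-Smale theory on three-manifolds provides non-singular flows on $K$ whose orbits all reach $\partial K$ in finite time. The resulting $b \in C^1_\sharp(Y_3)^3$ is non-vanishing -- so that no Dirac mass at a zero of $b$ spuriously enters $\cI_b$ -- and $\bigcup_i \gamma_i$ is a global attractor for $X$.

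Finally, for any $\mu \in \cI_b$ and any open neighborhood $V$ of $\bigcup_i \gamma_i$, the global attractor property together with compactness of $Y_3$ yields $T > 0$ with $X(T,Y_3) \subset V$, whence $\mu(V) = \mu(X(-T,V)) = 1$; taking a decreasing sequence of such $V_n$ with $\bigcap_n V_n = \bigcup_i \gamma_i$ gives $\mu(\bigcup_i \gamma_i) = 1$, so $\supp \mu \subset \bigcup_i \gamma_i$. Since each $\gamma_i$ is a non-stationary periodic orbit, the restricted flow is uniquely ergodic, so $\mu = \sum_i c_i \mu_i$ with $c_i \geq 0$ and $\sum_i c_i = 1$; hence $\mu(b) = \sum_i c_i v_i \in \P$, proving $\sfC_b \subset \P$. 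The reverse inclusion is immediate from $\mu_i \in \cI_b$ and convexity of $\sfC_b$. The main technical obstacle is the differential-topological construction of a non-vanishing, non-recurrent vector field on $K$ with the required inward boundary behavior; this is the single step where three-dimensionality is genuinely exploited, via the codimension-$2$ flexibility that allows disjoint smooth closed curves in arbitrary integer homotopy classes -- a flexibility that fails in dimension two, where the collinearity constraint of Theorem~\ref{thm.FM} obstructs any such construction.
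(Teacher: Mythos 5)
Your approach is genuinely different from the paper's, but there is a fatal topological obstruction in the step you yourself flag as the main technical obstacle. You want a non-vanishing field on $K := Y_3 \setminus \bigcup_i U_i$ that points strictly \emph{out} of $K$ (into $\bigcup_i U_i$) along $\partial K$, with no non-wandering points in $\text{int}(K)$. These two requirements are mutually exclusive. Since $b$ is transverse to $\partial K$ and outward-pointing, forward orbits can only cross $\partial K$ from $K$ into $\bigcup_i U_i$, hence backward orbits can only cross from $\bigcup_i U_i$ into $K$; consequently $K$ is backward-invariant. The set $R := \bigcap_{t\geq 0} X(-t,K)$ is then a nested intersection of non-empty compacta, hence non-empty, compact, and fully invariant (forward invariance is its definition, backward invariance follows since $K$ is backward invariant), and one checks it cannot meet $\partial K$ because $\partial K$ is nowhere invariant. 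So $R$ is a non-empty compact invariant subset of $\text{int}(K)$; by Krylov--Bogolyubov it supports an invariant probability measure $\mu$, and there is nothing in your construction that controls $\mu(b)$. Equivalently: on a compact manifold without boundary you cannot have the chain-recurrent set consist only of attractors — dual repellers must exist. This is not a repairable detail; it kills the claim $\supp\mu\subset\bigcup_i\gamma_i$ for all $\mu\in\cI_b$, which is the heart of your inclusion $\sfC_b\subset\P$.

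The paper sidesteps the dynamics entirely. Instead of confining invariant measures, it builds $b(y)=\sum_{i=1}^n\varphi_i(y)\,\xi^i$ using a partition of unity $(\varphi_i)$ subordinate to disjoint ``solid tori'' $\Pi(C_i)$ around rational lines in $Y_3$ (Lemma~\ref{lem.xiixi} playing the role of your disjoint-curves claim). Because $\varphi_i\geq 0$ and $\sum_i\varphi_i=1$, one has $b(y)\in\P$ \emph{pointwise}, so $\mu(b)=\int b\,d\mu\in\P$ for \emph{every} $\mu\in\cI_b$ — no information on where $\mu$ lives is needed, no global attractor, no Morse--Smale structure. The reverse inclusion $\P\subset\sfC_b$ comes from exhibiting normalized Lebesgue measure on each $\Pi(C_i)$, which is invariant because $b\equiv\xi^i$ there and $\xi^i$ is tangent to the cylinder boundary. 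This is the key idea your approach misses: replacing dynamical control of $\cI_b$ by a convexity constraint on $b$ itself. Your intuition about codimension-2 flexibility in $Y_3$ versus $Y_2$ is correct and is exactly what Lemma~\ref{lem.xiixi} encodes, but the rest of the argument needs the partition-of-unity trick rather than limit cycles and attractors.
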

\begin{arem}\label{rem.K}
Theorem~\ref{thm.Cb3d} illuminates the gap between dimension three and dimension two in relation to the asymptotics of the flow. Indeed, the Franks-Misiurewicz Theorem~\ref{thm.FM} ensures that the two-dimensional rotation set $\sfC_b$ is always a closed line segment of $\R^2$.
\par
The three-dimensional result of Theorem~\ref{thm.Cb3d} appears as the extension in dimension three of the similar two-dimensional closure result of Kwapisz~\cite{Kwa} for any convex polygon with rational vertices, which is obtained for a suitable lift $F:\R^2\to\R^2$ of some homeomorphism on $Y_2$ homotopic to identity, rather than the ODE's lift $X(1,\cdot):\R^2\to\R^2$ associated with a suitable two-dimensional vector field $b$.
\par
The proof of Theorem~\ref{thm.Cb3d} is based on a cylinders structure (see Figure~\ref{fig1}) similar to the one used by Llibre and Mackay \cite[Example~4]{LlMa}. However, these authors built a particular flow $F$ on the torus $Y_3$ homotopic to identity, directly from the cylinders structure. In contrast, we construct below a vector field $b$ directly from a general cylinders structure (see from \eqref{CiCj} to \eqref{bphipsi1}), which itself induces the flow $X$ \eqref{bX}. The aim in both approaches consists in deriving a rotation set which has the shape of a convex polyhedron of $\R^3$.
\end{arem}
\begin{figure}[!t]
\centering
\includegraphics[scale=.6]{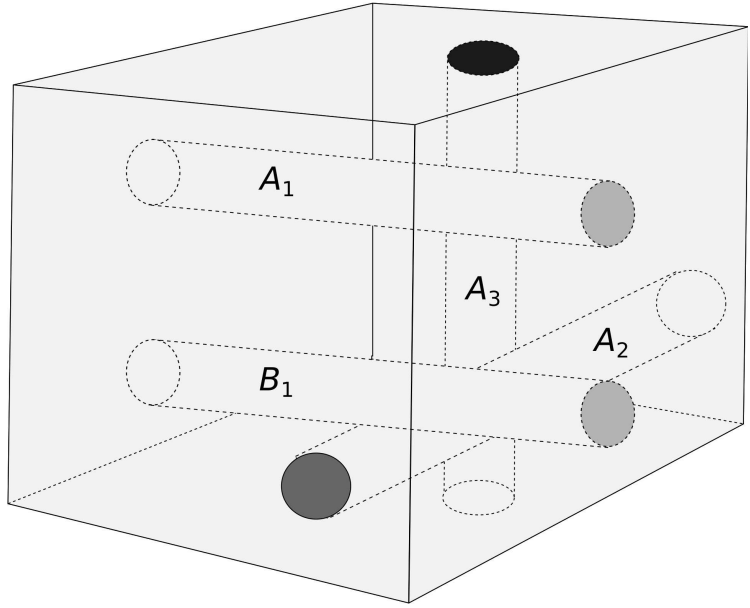}
\caption{\it Four cylinders being at a positive distance from each other in $Y_3$.}
\label{fig1}
\end{figure}
\par
We need the following algebraic result which is proved at the end of the section.
\begin{alem}\label{lem.xiixi}
Let $(\xi^1,\dots,\xi^{n-1})$ be $n-1$ vectors in $\Q^3\setminus\{0_{\R^3}\}$ and let $\xi^n\in\Q^3$, for $n\geq 2$.
Then, there exist $n$ points $x^1,\dots,x^n$ in $\R^3$ such that
\beq\label{xiixi}
\ba{rll}
\forall\,i\neq j\in\{1,\dots,n\}, & \Pi(x^i+\R\,\xi^i)\cap \Pi(x^j+\R\,\xi^j)={\rm\O}, & \mbox{if }\xi^n\neq 0_{\R^3},
\\ \ecart
\forall\,i\neq j\in\{1,\dots,n-1\}, &
\left\{\ba{l}
\Pi(x^i+\R\,\xi^i)\cap \Pi(x^j+\R\,\xi^j)={\rm\O}
\\ \ecart
\Pi(x^i+\R\,\xi^i)\cap\{0_{Y_3}\}={\rm\O},
\ea\right.
& \mbox{if }\xi^n=0_{\R^3},
\ea
\eeq
where $\Pi$ denotes the canonical surjection from $\R^3$ on $Y_3$.
\end{alem}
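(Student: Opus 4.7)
The plan is to build the points $x^i$ one at a time by a Lebesgue measure-zero avoidance argument, where the rationality hypothesis is what keeps the "forbidden" sets small. First I would translate the geometric disjointness condition into an algebraic one: for any $i\neq j$,
\[
\Pi(x^i+\R\,\xi^i)\cap\Pi(x^j+\R\,\xi^j)\neq\emptyset
\quad\Longleftrightarrow\quad
x^i-x^j\in W_{ij}+\Z^3,\qquad W_{ij}:=\R\,\xi^i+\R\,\xi^j,
\]
and similarly, for the second case of \eqref{xiixi},
\[
0_{Y_3}\in\Pi(x^i+\R\,\xi^i)
\quad\Longleftrightarrow\quad
x^i\in\R\,\xi^i+\Z^3.
\]

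The crucial point I would verify next is that each of the above "bad'' sets has Lebesgue measure zero in $\R^3$. Since $\xi^i,\xi^j\in\Q^3$ with $\xi^i,\xi^j$ not both zero, the linear subspace $W_{ij}$ is a \emph{rational} subspace of $\R^3$ of dimension $1$ or $2$. Hence $W_{ij}\cap\Z^3$ is a sublattice of rank $\dim W_{ij}$ in $W_{ij}$, and modding out gives that $W_{ij}+\Z^3$ is a countable union of parallel translates of the proper subspace $W_{ij}$, so it has measure zero. The same argument applies to $\R\,\xi^i+\Z^3$ for $\xi^i\in\Q^3\setminus\{0_{\R^3}\}$.

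I would then conclude by induction. Set $x^1:=0_{\R^3}$. Assume $x^1,\dots,x^{i-1}$ have been chosen so that the circles $\Pi(x^\ell+\R\,\xi^\ell)$ for $\ell\le i-1$ are pairwise disjoint in $Y_3$. Since
\[
\bigcup_{j<i}\bigl(x^j+W_{ij}+\Z^3\bigr)
\]
is a finite union of measure-zero sets, its complement is nonempty, and I may pick $x^i$ inside it. This handles case $\xi^n\neq 0_{\R^3}$ and completes the induction up to $i=n$. In the case $\xi^n=0_{\R^3}$, I would set $x^n:=0_{\R^3}$ and, at each previous step $i\le n-1$, additionally require $x^i$ to avoid the measure-zero set $\R\,\xi^i+\Z^3$, which guarantees $0_{Y_3}\notin\Pi(x^i+\R\,\xi^i)$.

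Conceptually this is routine once the measure-zero observation is isolated; the real content (and the only genuine obstacle) is that the argument breaks down without rationality. If some $\xi^i$ were incommensurable, then $\R\,\xi^i+\Z^3$ would be dense in $\R^3$, and for generic incommensurable pairs the set $W_{ij}+\Z^3$ would have full measure or even be all of $\R^3$; no inductive avoidance would then be possible. Thus the hypothesis $(\xi^1,\dots,\xi^{n-1})\in(\Q^3\setminus\{0_{\R^3}\})^{n-1}$ is used precisely to keep each forbidden locus of Lebesgue measure zero.
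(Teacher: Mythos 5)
Your proposal is correct and follows the same high-level strategy as the paper, namely an inductive choice of the $x^i$ obtained by avoiding, at each step, a finite union of Lebesgue-null sets determined by the previously chosen points and by the rationality of the $\xi^i$. The real difference lies in how the ``forbidden'' set is identified. You translate the non-disjointness of $\Pi(x^i+\R\,\xi^i)$ and $\Pi(x^j+\R\,\xi^j)$ \emph{exactly} into $x^i-x^j\in W_{ij}+\Z^3$ with $W_{ij}=\R\,\xi^i+\R\,\xi^j$, and then observe that $W_{ij}+\Z^3$ is a countable union of translates of a rational proper subspace, hence null. The paper instead avoids a somewhat larger null set, the union over all nonzero integer vectors $k$ and all integers $p$ of the affine hyperplanes $\{x:(x-x^i)\cdot k=p\}$ (together with the analogous set $E_0$ based at the origin), and then argues by contradiction: solving the linear system coming from $x^i+t_i\,\xi^i+k^i=x^n+t_n\,\xi^n+k^n$, it shows that any non-empty intersection would force $(x^n-x^i)\cdot k\in\Z$ for some nonzero $k\in\Z^3$. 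Both arguments rest on the same exploitation of rationality; your description of the bad set as $x^j+W_{ij}+\Z^3$ and the recognition that $W_{ij}+\Z^3$ is a countable union of translates of a rational subspace is more structural and sidesteps the coordinate computation, while the paper's version is more explicit and elementary. One small slip to fix: you set $x^1:=0_{\R^3}$, but in the case $\xi^n=0_{\R^3}$ you then also require every $x^i$ with $i\leq n-1$ to avoid $\R\,\xi^i+\Z^3$; since $0_{\R^3}\in\R\,\xi^1+\Z^3$ this is inconsistent for $i=1$. The remedy is simply to drop the convention $x^1=0_{\R^3}$ in that case and choose $x^1\notin\R\,\xi^1+\Z^3$ (possible since that set is null), which is exactly what your inductive scheme already does for $i\geq 2$, so the issue is purely one of phrasing.
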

\par\noindent
\begin{proof}{ of Theorem~\ref{thm.Cb3d}}
Let $\P(\xi^1,\dots,\xi^n)$ be a closed convex polyhedron of $\R^3$ with rational vertices, and let $\xi^1,\dots,\xi^{n-1}$ be non null vectors of $\R^3$.
\par\smallskip\noindent
{\it First case: $\xi^n\neq 0_{\R^3}$.}
\par\smallskip\noindent
Consider $n$ points $x^1,\dots,x^n\in\R^3$ satisfying the first assertion of \eqref{xiixi}.
For each $i\in\{1,\dots,n\}$, let $C_i$ be the closed cylinder of $\R^3$ of axis $x^i+\R\,\xi^i$ and of radius $R>0$. In view of \eqref{xiixi} we may choose $R$ small enough such that
\beq\label{CiCj}
\forall\,i\neq j\in\{1,\dots,n\},\ \forall\,k\neq\ell\in\Z^3,\quad (k+C_i)\cap (\ell+C_i)={\rm\O}\quad\mbox{and}\quad \Pi(C_i)\cap\Pi(C_j)={\rm\O}.
\eeq
\par\noindent
Figure~\ref{fig1} represents the $n=4$ two-by-two disjoint ``cylinders" of $Y_3$
\[
A_1=\Pi(C_1),\quad B_1=\Pi(C_2),\quad A_2=\Pi(C_3),\quad A_3=\Pi(C_4),
\]
such that the cylinders $C_1,C_2,C_3,C_4$ of $\R^3$ have respective directions $e_1,e_1,e_2,e_3$.
\par\noindent
Moreover, note that each ``cylinder" $\Pi(C_i)$ for $i\in\{1,\dots,n\}$, is a compact set of $Y_3$, since
\[
\Pi^{-1}(\Pi(C_i))=\bigcup_{k\in\Z^3}(k+C_i)
\]
is a closed set of $\R^3$ due to the first equality of \eqref{CiCj}, and thus $\Pi(C_i)$ is a closed set of $Y_3$.
This combined with the second equality of \eqref{CiCj} implies that the two-by-two disjoint compact sets $\Pi(C_1),\dots,\Pi(C_n)$ are to a positive distance from each other.
Hence, we may consider a partition of unity $(\ph_1,\dots,\ph_n)$ in $C^1_\sharp(Y_3)$ associated with $\Pi(C_1),\dots,\Pi(C_n)$, satisfying
\beq\label{phii}
\left\{\ba{cll}
0\leq\ph_i\leq 1 & \mbox{in }Y_3, & \mbox{for any }i\in\{1,\dots,n\},
\\ \ecart
\ph_i=1 & \mbox{in }\Pi(C_i), & \mbox{for any }i\in\{1,\dots,n\},
\\ \ecart
\dis \sum_{i=1}^n\ph_i=1 & \mbox{in }Y_3. &
\ea\right.
\eeq
Then, define the vector field $b\in C^1_\sharp(Y_3)^3$ by
\beq\label{bphipsi1}
b(y):=\sum_{i=1}^n\ph_i(y)\,\,\xi^i\quad\mbox{for }y\in Y_3,
\eeq
and define the probability measures $\mu_1,\dots,\mu_n$ on $Y_3$ by
\beq\label{mui}
d\mu_i(y):={\cha_{\Pi(C_i)}(y)\over |\Pi(C_i)|}\,dy\quad\mbox{for }i\in\{1,\dots,n\}.
\eeq
Let $i\in\{1,\dots,n\}$, and let $\psi\in C^\infty_\sharp(Y_3)$.
By \cite[Lemma~3.5]{Bri1} the function $\psi$ can be represented as a periodized function $\varphi_\sharp$, according to \eqref{tmu}, of a suitable function $\ph\in C^\infty_c(\R^d)$.
Then, using successively \eqref{bphipsi1}, the first equality of \eqref{CiCj} which implies that
\[
\cha_{\Pi(C_i)}=[\cha_{C_i}]_\sharp=\sum_{k\in\Z^3}\cha_{k+C_i}\quad\mbox{in }\R^3,
\]
and integrating by parts on each cylinder $k+C_i$, we get that
\[
\ba{l}
\dis \int_{Y_3}b(y)\cdot\nabla\psi(y)\,d\mu_i(y)={1\over |\Pi(C_i)|}\int_{Y_3}[\cha_{C_i}]_\sharp(y)\,\xi^i\cdot\nabla\varphi_\sharp(y)\,dy
\\ \ecart
\dis ={1\over |\Pi(C_i)|}\sum_{k\in\Z^3}\left(\int_{Y_3}[\cha_{C_i}]_\sharp(y)\,\xi^i\cdot\nabla\varphi(y+k)\,dy\right)
={1\over |\Pi(C_i)|}\int_{\R^3}[\cha_{C_i}]_\sharp(x)\,\xi^i\cdot\nabla\varphi(x)\,dx
\\ \ecart
\dis ={1\over |\Pi(C_i)|}\int_{\R^3}\Big(\sum_{k\in\Z^3}\cha_{k+C_i}(x)\Big)\,\xi^i\cdot\nabla\varphi(x)\,dx
={1\over |\Pi(C_i)|}\sum_{k\in\Z^3}\left(\int_{k+C_i}\xi^i\cdot\nabla\varphi(x)\,dx\right)
\\ \ecart
\dis ={1\over |\Pi(C_i)|}\sum_{k\in\Z^3}\left(\int_{k+\partial C_i}\underbrace{\xi^i\cdot\nu(x)}_{=0}\,\varphi(x)\,d\sigma(x)\right)=0,
\ea
\]
where $\nu(x)$ is the unit outer normal to the cylinder $\partial C_i$ orthogonal to the direction $\xi^i$ of $C_i$.
Hence, by virtue of Proposition~\ref{pro.divcurl} the probability measure $\mu_i$ is invariant for the flow associated with~$b$.
Therefore, by \eqref{bphipsi1} we obtain that
\[
\forall\,i\in\{1,\dots,n\},\quad\mu_i(b)={1\over |\Pi(C_i)|}\int_{\Pi(C_i)}\xi^i\,dx=\xi^i.
\]
By convexity this implies that the rotation set $\sfC_b$ contains the closed polyhedron $\P(\xi^1,\dots,\xi^n)$.
\par
Conversely, by the definition \eqref{bphipsi1} of the vector field $b$ combined with \eqref{phii}, we have for any invariant probability measure $\mu$ for the flow associated with $b$,
\[
\mu(b)=\sum_{i=1}^n\left(\int_{Y_3}\ph_i(y)\,d\mu(y)\right)\xi^i\quad\mbox{with}\quad \sum_{i=1}^n\;\underbrace{\int_{Y_3}\ph_i(y)\,d\mu(y)}_{\in[0,1]}=1.
\]
Hence, the vector $\mu(b)$ belongs to $\P(\xi^1,\dots,\xi^n)$ for any $\mu\in\cI_b$, so that the rotation set $\sfC_b$ agrees with the polyhedron $\P(\xi^1,\dots,\xi^n)$.
This concludes the first case.
\par\medskip\noindent
{\it Second case: $\xi^n=0_{\R^3}$.}
\par\smallskip\noindent
By the second assertion of \eqref{xiixi}, $\Pi(C_1),\dots,\Pi(C_{n-1})$ and $\{0_{Y_3}\}$ are two-by-two disjoint compact sets of $Y_3$, which are thus at a positive distance from each other.
Hence, we may consider a partition of unity $(\ph_1,\dots,\ph_n)$ in $C^1_\sharp(Y_3)$ associated with $\Pi(C_1),\dots,\Pi(C_{n-1})$ and $\{0_{Y_3}\}$ satisfying
\[
\left\{\ba{cll}
0\leq\ph_i\leq 1 & \mbox{in }Y_3, & \mbox{for any }i\in\{1,\dots,n\},
\\ \ecart
\ph_i=1 & \mbox{in }\Pi(C_i), & \mbox{for any }i\in\{1,\dots,n\!-\!1\},
\\ \ecart
\ph_n(0_{Y_3})=1, & &
\\ \ecart
\dis \sum_{i=1}^n\ph_i=1 & \mbox{in }Y_3. &
\ea\right.
\]
Then, define the vector field $b\in C^1_\sharp(Y_3)^3$ by
\[
b(y):=\sum_{i=1}^{n-1}\ph_i(y)\,\,\xi^i\quad\mbox{for }y\in Y_3,
\]
and define the probability measures $\mu_1,\dots,\mu_n$ on $Y_3$ by
\[
d\mu_i(y):={\cha_{\Pi(C_i)}(y)\over |\Pi(C_i)|}\,dy\quad\mbox{for }i\in\{1,\dots,n-1\}\quad\mbox{and}\quad\mu_n:=\delta_{0_{Y_3}}.
\]
Therefore, proceeding as in the first case we obtain that the rotation set $\sfC_b$ agrees with the convex polyhedron $\P(\xi^1,\dots,\xi^{n-1},0_{\R^3})$.
\par
The proof of Theorem~\ref{thm.Cb3d} is now complete.
\end{proof}
\begin{arem}
Contrary to dimension three, the geometrical structure~\eqref{bphipsi1} of the vector field $b$ is necessarily restricted in dimension two to a stratification in some direction $k\in\Z^2\setminus\{0_{\R^2}\}$ by
\[
b(y):=\sum_{i=1}^n\ph_i(y)\,\alpha_i\,k\quad\mbox{for }y\in Y_2,
\]
where $(\ph_1,\dots,\ph_n)$ is a partition of the unity in $C^1_\sharp(Y_2;[0,1])$ associated with $n$ subsets $A_1,\dots,A_n$ of~$Y_2$ which are two-by-two disjoint closure open strips in the direction orthogonal to $k$, and where $\alpha_1,\dots\alpha_n$ are $n$ real constants. See Figure~\ref{fig2} with two strips $A_1,A_2$ in the direction orthogonal to the vector $k:=e_1+e_2$.
This leads us to a closed line segment $\sfC_b$ carried by $[0_{\R^2},k]$.
\end{arem}
\begin{figure}[!t]
\centering
\includegraphics[scale=.4]{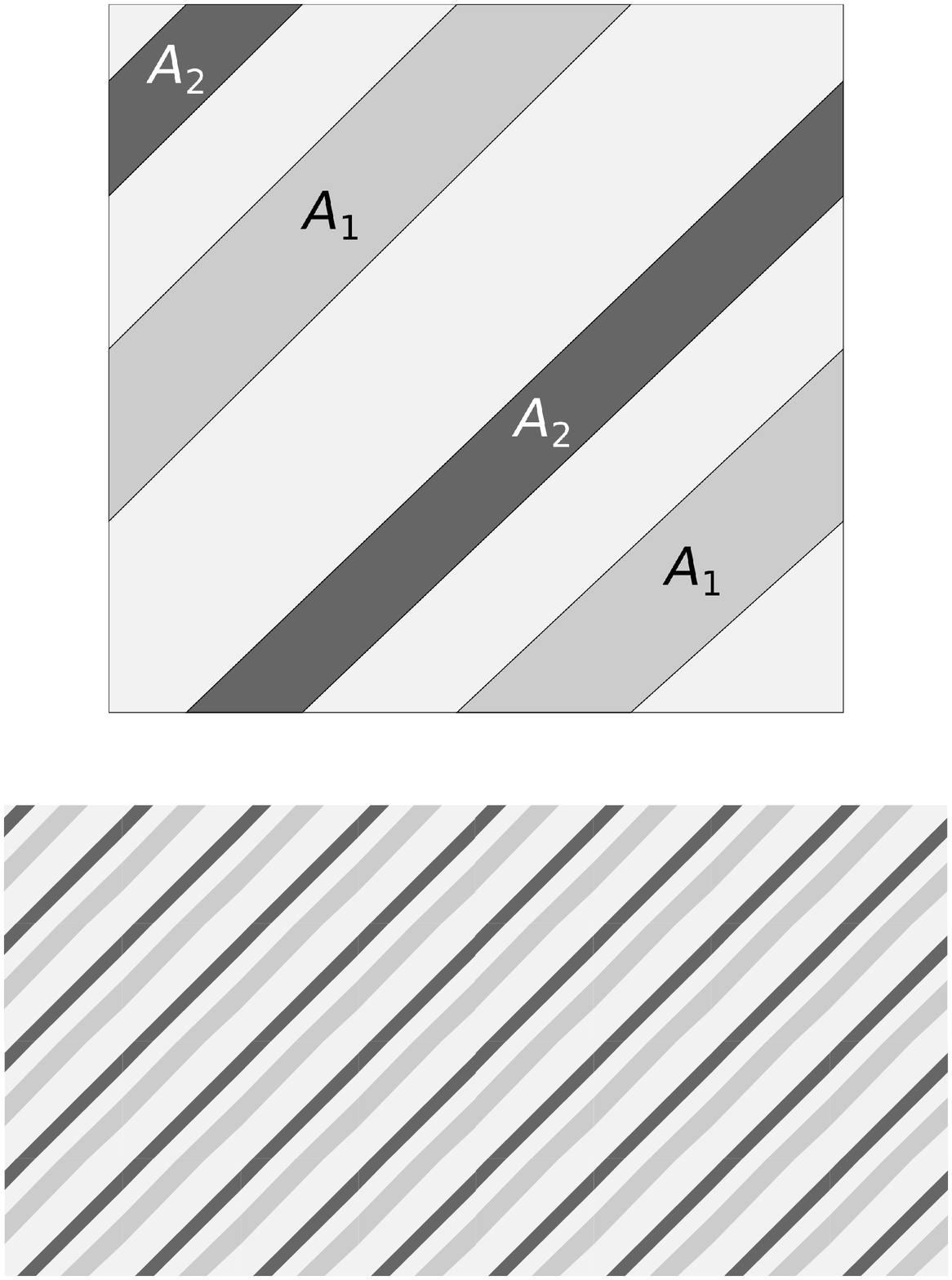}
\newsavebox{\legende}
\sbox{\legende}{\parbox[t]{12.cm}{\it Above two strips being at a positive distance from each other in $Y_2$.
\\
\it Below the periodic repetition in $\R^2$.}}
\caption{\usebox{\legende}}
\label{fig2}
\end{figure}
\begin{proof}{ of Lemma~\ref{lem.xiixi}}
\par\smallskip\noindent
{\it First case: $\xi^n\neq 0_{\R^3}$.}
\par\noindent
Let us prove by induction on $n\geq 2$ that the first assertion \eqref{lem.xiixi}, including in the induction hypothesis the condition
\beq\label{E0}
\forall\,i\in\{1,\dots,n\},\quad x^i\notin E_0:=\bigcup_{k\in\Z^3\setminus\{0_{\R^3}\}}\bigcup_{p\in\Z}\;\big\{x\in\R^3:x\cdot k=p\big\}.
\eeq
The proof of the transition from $n\!-\!1$ to $n$ includes the initialization with $n=2$.
Assume that that the first assertion of \eqref{lem.xiixi} hold for the $n\!-\!1$ vectors $\xi^1,\dots,\xi^{n-1}$ in $\Q^3\setminus\{0_{\R^3}\}$, with $n\!-\!1$ points $x^1,\dots,x^{n-1}$ in $\R^3$ satisfying \eqref{E0}.
\par
First of all, since for any $i\in\{1,\dots,n\!-\!1\}$, for any $k\in\Z^3\setminus\{0_{\R^3}\}$ and for any $p\in\Z$, the hyperplanes of $\R^3$
\[
\big\{x\in\R^3:x\cdot k=p\big\}\quad\mbox{and}\quad\big\{x\in\R^3:(x-x^i)\cdot k=p\big\}
\]
have zero Lebesgue's measure, so have the countable unions $E_0$ in \eqref{E0} and
\beq\label{setE}
E:=\bigcup_{i=1}^{n-1}\bigcup_{k\in\Z^3\setminus\{0_{\R^3}\}}\bigcup_{p\in\Z}\;\big\{x\in\R^3:(x-x^i)\cdot k=p\big\},
\eeq
which implies that $E_0\cup E\neq\R^3$. Then, consider a point $x^n\in\R^3\setminus(E_0\cup E)$, and assume by contradiction that for some $i\in\{1,\dots,n\!-\!1\}$, there exists $x\in\R^3$ such that
\[
\Pi(x)\in \Pi(x^i+\R\,\xi^i)\cap \Pi(x^n+\R\,\xi^n).
\]
Hence, there exist $t_i,t_n\in\R$ and $k^i,k^n\in\Z^3$ such that
\beq\label{xxixn}
x=x^i+t_i\,\xi^i+k^i=x^n+t_n\,\xi^n+k^n.
\eeq
Now, solve the linear system of unknown $(t_i,t_n)$:
\beq\label{titn}
\left\{\ba{l}
\xi^i_1\,t_i-\xi^n_1\,t_n=x^n_1-x^i_1+k^n_1-k^i_1
\\ \ecart
\xi^i_2\,t_i-\xi^n_2\,t_n=x^n_2-x^i_2+k^n_2-k^i_2
\\ \ecart
\xi^i_3\,t_i-\xi^n_3\,t_n=x^n_3-x^i_3+k^n_3-k^i_3,
\ea\right.
\eeq
with $\xi^i,\xi^n, k^i,k^n\in\Q^3$ and $\xi^i,\xi^n$ non null. If the vectors $\xi^i$ and $\xi^n$ are not parallel, then for example,
the $(2\!\times\! 2)$ determinant $\xi^i_2\,\xi^n_3-\xi^i_3\,\xi^n_2$ is non zero, so that the two last equations of \eqref{titn} have a unique solution $(t_i,t_n)$ in $\R^2$.
Putting this solution in the first equation of \eqref{titn} we get that
\beq\label{alphai}
\exists\,(\alpha_1,\alpha_2,\alpha_3)\in\Q^3\setminus\{0_{\R^3}\},\quad \alpha_1\,(x^n_1-x^i_1)+\alpha_2\,(x^n_2-x^i_2)+\alpha_3\,(x^n_3-x^i_3)\in\Q,
\eeq
where $\alpha_1=1$ in the case $\xi^i_2\,\xi^n_3-\xi^i_3\,\xi^n_2\neq 0$.
The other cases are similar. Otherwise, if $\xi^i=\alpha\,\xi^n$ for $\alpha\in\R\setminus\{0\}$, then \eqref{xxixn} yields
\[
x^ i-x^n=(t_n-\alpha\,t_i)\,\xi^n+k^n-k^i,
\]
which still leads us to \eqref{alphai}.
Hence, in all cases we deduce from \eqref{alphai} the existence of a vector $k\in\Z^3\setminus\{0_{\R^3}\}$ such that $(x^n-x^i)\cdot k\in\Z$, which contradicts the choice $x^n\notin E$ defined by \eqref{setE}.
Therefore, we obtain that
\[
\forall\,i\neq j\in\{1,\dots,n-1\},\quad \Pi(x^i+\R\,\xi^i)\cap \Pi(x^n+\R\,\xi^n)={\rm\O},
\]
which combined with the induction hypothesis shows the first assertion of \eqref{xiixi}.
\par\medskip\noindent
{\it Second case: $\xi^n=0_{\R^3}$.}
\par\noindent
Let $i\in\{1,\dots,n\!-\!1\}$. If $0_{Y_3}\in \Pi(x^i+\R\,\xi^i)$, then there exist $t_i\in\R$ and $k^i\in\Z^3$ such that $x^i=t_i\,\xi^i+k^i$.
Hence, due to $\xi^i\neq 0_{\R^3}$, there exists a vector $k\in\Z^3\setminus\{0_{\R^3}\}$ such that $x^i\cdot k\in\Z$, which contradicts the induction hypothesis \eqref{E0} for $n\!-\!1$.
Therefore, the second assertion of \eqref{xiixi} holds, which concludes the proof of Lemma~\ref{lem.xiixi}.
\end{proof}
\section{Homogenization of 2D linear transport equations}\label{s.hom}
Let $T\in(0,\infty)$, let $b\in C^1_\sharp(Y_2)^2$, and let $u_0(x,y)\in C^0_c(\R^2;C^0_\sharp(Y_2))$.
Consider the transport equation in the cylinder $Q_T:=(0,T)\times\R^2$
\beq\label{treq}
\left\{\ba{ll}
\dis {\partial u_\ep\over\partial t}(t,x)-b(x/\ep)\cdot\nabla_x u_\ep(t,x)=0 & \mbox{in }Q_T
\\ \ecart
u_\ep(0,x)=u_0(x,x/\ep) & \mbox{for }x\in\R^2.
\ea\right.
\eeq
In the sequel, we define for $f\in L^1_\sharp(Y_2)$, the rescaled function $f_\ep$ by $f_\ep(x):=f(x/\ep)$, $x\in\R^2$. 
\par\medskip
We have the following homogenization result.
\begin{atheo}\label{thm.homtreq}
Let $b$ be a vector field in $C^1_\sharp(Y_2)^2$, and let $\sigma$ be an almost-everywhere positive function in $Y_2$ satisfying
\beq\label{sipr}
\sigma\in W^{1,{2r\over r+2}}_\sharp(Y_2)\;\;\mbox{with}\;\; r\in(2,\infty)\quad\mbox{and}\quad
{1\over\sigma}\in L^p_\sharp(Y_2)\;\;\mbox{with}\;\;p>{r\over r-2},
\eeq
\beq\label{sib}
{\rm div}\,(\sigma\,b)=0\;\;\mbox{in }\cD'(\R^2) \quad\mbox{and}\quad \zeta:=\overline{\sigma\,b}=\int_{Y_2}\sigma(y)\,b(y)\,dy\neq 0_{\R^2}.
\eeq
Then, for any function $u_0(x,y)$ in $C^0_c(\R^2;C^0_\sharp(Y_2))$, the solution $u_\ep$ to equation \eqref{treq} satisfies, up to a subsequence,
\beq\label{U}
\sigma_\ep(x)\,u_\ep(t,x)\rightharpoonup v(t,x):=\overline{U(t,x,\cdot)}\;\;\mbox{weakly in }L^{q}(Q_T),\quad\mbox{with}\;\; q:={p\,r\over p\,r-p-r}\in(1,r),
\eeq
where $U(t,x,y)\in L^q(Q_T;L^q_\sharp(Y_2))$ is solution to the infinite dimensional system of transport equations in $Y_2$
\beq\label{treqU}
\left\{\ba{ll}
\dis {\partial U\over\partial t}(t,x,y)-b(y)\cdot\nabla_y U(t,x,y)=0 & \mbox{in }Y_2
\\ \ecart
{\rm div}_y\big(U(t,x,y)\,b(y)\big)=0& \mbox{in }Y_2,
\ea\right.
\quad\mbox{for a.e. }(t,x)\in Q_T,
\eeq
and the weak limit $v$ of $\sigma_\ep u_\ep$ is solution to the nonlocal homogenized equation
\beq\label{homeqtrvU}
\left\{\ba{ll}
\dis {\partial v\over\partial t}(t,x)-{\zeta\over |\zeta|^2}\cdot\nabla\left(\int_{Y_2}U(t,x,y)\,b(y)\cdot\zeta\,dy\right)=0 & \mbox{in }Q_T
\\ \ecart
v(0,x)=\overline{\sigma\,u_0(x,\cdot)} & \mbox{for }x\in\R^2.
\ea\right.
\eeq
Moreover, if $\sfC_b=\{\zeta\}$, then homogenized equation \eqref{homeqtrvU} becomes the local transport equation
\beq\label{homeqtrv}
\left\{\ba{ll}
\dis {\partial v\over\partial t}(t,x)-\zeta\cdot\nabla v(t,x)=0 & \mbox{in }Q_T
\\ \ecart
v(0,x)=\overline{\sigma\,u_0(x,\cdot)} & \mbox{for }x\in\R^2.
\ea\right.
\eeq
\end{atheo}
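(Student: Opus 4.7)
The plan is to recast~\eqref{treq} in conservation form through the substitution $U_\ep:=\si_\ep\,u_\ep$, extract two-scale limits, and close the system by invoking the Franks--Misiurewicz collinearity~\eqref{FMcol}. Since $\si_\ep$ is time-independent and $\div(\si_\ep b_\ep)(x)=\ep^{-1}(\div\,(\si b))(x/\ep)=0$ by~\eqref{sib}, multiplying~\eqref{treq} by $\si_\ep$ and rearranging yields the conservative equation
\[
{\partial U_\ep\over\partial t}-\div\,(b_\ep\,U_\ep)=0\quad\mbox{in }Q_T,\qquad U_\ep(0,\cdot)=\si_\ep\,u_0(\cdot,\cdot/\ep).
\]
Propagation of $u_\ep$ along the characteristics of $b_\ep$ gives $\|u_\ep\|_{L^\infty(Q_T)}\le\|u_0\|_{L^\infty}$, while the two-dimensional Sobolev embedding applied to~\eqref{sipr} yields $\si\in L^r_\sharp(Y_2)$, so that $U_\ep$ is uniformly bounded in $L^r_{\rm loc}(Q_T)$. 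Finite propagation speed of $b_\ep$ together with the compact $x$-support of $u_0$ confines the support of $U_\ep$ to a fixed compact subset of $Q_T$, and the H\"older triple $1/q+1/p+1/r=1$ appearing in the statement is precisely what is needed to pair $U_\ep$ against test objects involving $1/\si\in L^p_\sharp(Y_2)$.

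Next, I would extract a subsequence along which Nguetseng--Allaire two-scale convergence applies: $u_\ep\overset{2s}{\rightharpoonup} u(t,x,y)\in L^\infty$, and since $\si_\ep$ strongly two-scale converges to $\si(y)$, the product $U_\ep=\si_\ep u_\ep$ two-scale converges to $U(t,x,y):=\si(y)\,u(t,x,y)$, with weak $L^q(Q_T)$ limit $v(t,x):=\overline{U(t,x,\cdot)}$. To obtain the equations for $U$, I would test the weak form of the conservation law with $\ep\,\ph(t,x)\,\psi(x/\ep)$ where $\ph\in C^1_c(Q_T)$ and $\psi\in C^1_\sharp(Y_2)$: the $\partial_t\ph$ and $\nabla_x\ph$ contributions are $O(\ep)$ and vanish, leaving
\[
\int_{Q_T}\ph(t,x)\int_{Y_2}U(t,x,y)\,b(y)\cdot\nabla\psi(y)\,dy\,dx\,dt=0,
\]
so that $\div_y(U(t,x,\cdot)\,b)=0$ in $Y_2$ for a.e.\ $(t,x)\in Q_T$, which is the second equation of~\eqref{treqU}. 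Testing instead with $\ph(t,x)$ alone produces the macroscopic conservation law $\partial_t v=\div_x\bigl(\int_{Y_2}U\,b\,dy\bigr)$, and the standard periodic homogenization of $\si_\ep u_0(\cdot,\cdot/\ep)$ identifies the initial trace $v(0,x)=\overline{\si\,u_0(x,\cdot)}$.

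The closing step exploits the collinearity result. By linearity, splitting $u_0$ into positive and negative parts, I may assume $u_0\ge 0$, so that $U\ge 0$. After the normalization $\bar\si=1$ (WLOG by rescaling $\si$ and $\zeta$), the equation $\div_y(Ub)=0$ combined with Proposition~\ref{pro.divcurl} shows that $U(t,x,\cdot)\,dy/v(t,x)$ is an invariant probability measure for the flow~\eqref{bX} on the set $\{v\neq 0\}$, while $\si\,dy$ is invariant with mean $\zeta$. The Franks--Misiurewicz Theorem~\ref{thm.FM} then forces $\int_{Y_2}U(t,x,y)b(y)\,dy$ to be parallel to $\zeta$ for a.e.\ $(t,x)$, hence equal to $|\zeta|^{-2}\bigl(\int_{Y_2}U\,b\cdot\zeta\,dy\bigr)\zeta$, and substitution into the macroscopic law yields~\eqref{homeqtrvU}. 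In the special case $\sfC_b=\{\zeta\}$, the collinearity strengthens into equality: every invariant probability has mean exactly $\zeta$, so $\int_{Y_2}Ub\,dy=v\,\zeta$ and~\eqref{homeqtrvU} collapses to the local transport equation~\eqref{homeqtrv}.

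The hardest step is likely the rigorous justification of the product limit $\si_\ep u_\ep\rightharpoonup v$ with only the limited integrability~\eqref{sipr} on $\si$, together with the strong two-scale identification $U=\si\,u$ needed to guarantee that $U(t,x,\cdot)/v(t,x)$ is a bona fide invariant probability density on which the collinearity result can be applied; once this structural step is secured, the derivation of both the nonlocal and the local homogenized equations is essentially algebraic.
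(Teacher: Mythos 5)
Your proof follows the same broad strategy as the paper's: pass to the conservative form by multiplying through by $\si_\ep$, extract two-scale limits to obtain the macroscopic conservation law and the microscopic divergence-free condition $\div_y(Ub)=0$, then invoke the Franks--Misiurewicz collinearity to resolve the direction of $\overline{Ub}$. Two of your steps, however, genuinely deviate from the paper's route and are worth comparing.

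\emph{Boundedness.} The paper derives a $\si_\ep$-weighted $L^s$ conservation law $\frac{d}{dt}\int\si_\ep|u_\ep|^s\,dx=0$ and then a H\"older interpolation with the Sobolev embedding $W^{1,2r/(r+2)}\hookrightarrow L^r$ to conclude that $\si_\ep u_\ep$ is bounded in $L^\infty((0,T);L^q(\R^2))$, with the delicate exponent bookkeeping that produces $q=\frac{pr}{pr-p-r}$. You instead use the maximum principle $\|u_\ep\|_{L^\infty}\le\|u_0\|_{L^\infty}$ (which is legitimate here from the explicit characteristics formula together with finite speed of propagation) plus $\si\in L^r$ to get $\si_\ep u_\ep$ bounded in $L^r_{\rm loc}$, which is stronger than the $L^q$ in the statement. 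This is a cleaner route, and it does not invalidate anything: the $L^q$ framework in the statement is still what the paper records because the exponent $p$ enters downstream, when one needs $U(t,x,\cdot)/\si\in L^{r'}_\sharp(Y_2)$ to pair against $\si\in L^r_\sharp(Y_2)$.

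\emph{Invariance of $U^\pm$.} The paper does not split $u_0$ into positive and negative parts. Instead it keeps the possibly signed two-scale limit $U$, shows via \cite[Lemma~2.2]{BrHe2} (used twice) that $U(t,x,\cdot)/\si$ is an invariant function and hence that its positive and negative parts give invariant measures $U^\pm(t,x,y)\,dy$, and then applies \eqref{FMcol} to each. Your alternative---decompose $u_0=u_0^+-u_0^-$, propagate positivity along characteristics, and use Proposition~\ref{pro.divcurl} with the non-negative two-scale limits $U^\pm\geq 0$ of $\si_\ep u_\ep^\pm$---is more elementary and avoids the external lemma. You should be explicit that the two-scale subsequence is extracted simultaneously for $u_\ep^+$ and $u_\ep^-$ so that $U=U^+-U^-$ along a common subsequence, and that your $U^\pm$ are \emph{not} a priori the positive and negative parts of $U$, only a particular decomposition into non-negative pieces; this is harmless because the collinearity $\overline{U^\pm b}\parallel\zeta$ for each piece still gives $\overline{Ub}\parallel\zeta$ by linearity.

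One additional remark: your intermediate identification $U(t,x,y)=\si(y)\,u(t,x,y)$ (strong $\times$ weak two-scale product) is plausible under your $L^\infty$ bound on $u_\ep$, but it is not needed anywhere in the argument and the paper never makes it. I would drop it, or at least flag it as an unnecessary aside, so the proof does not appear to rest on an unverified product-limit lemma in $L^r$ two-scale convergence. With these clarifications your proposal is a correct, slightly more self-contained variant of the paper's proof.
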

\begin{arem}\label{rem.homtreq}
Theorem~\ref{thm.homtreq} is an extension of \cite[Theorem~3.2]{HoXi} and \cite[Theorem~3.2]{BrHe2} obtained with $\sigma=1$, as well as an extension of \cite[Proposition~3.1]{Bri2} obtained with $\sigma$ being the jacobian determinant of some $C^2$-diffeomorphism on $Y_2$. The homogenized transport equation \eqref{homeqtrv} was also derived \cite[Theorem~5.1]{BrHe1} in any dimension under the sole assumption that the rotation set $\sfC_b$ is a unit set, but without oscillating initial condition, {\em i.e.} for $u_0(x,y)$ independent of the fast variable $y$.
\par
In the two-dimensional framework of Theorem~\ref{thm.homtreq}, the case $\#\sfC_b=1$ is regarded as a by-product of the more general limit equation~\eqref{homeqtrvU}. This limit equation illuminates the specific two-dimensional dynamics of Theorem~\ref{thm.FM}, namely the fact that $\sfC_b$ is a closed line segment of $\R^2$.
As a consequence, the drift velocity term in equation~\eqref{homeqtrvU} appears as a nonlocal term as shown by Tartar \cite[Section~2]{Tar} in dimension one (see also \cite{AHZ1,AHZ2} and the references therein).
However, and that is the new point, the nonlocal drift acts in a fixed direction $\zeta\in\sfC_b$.
In other words, the loss of compactness by homogenization of the transport equation \eqref{treq} holds only in the direction of the rotation set $\sfC_b$.
\end{arem}
\begin{proof}{ of Theorem~\ref{thm.homtreq}}
First of all, note that by virtue of \cite[Proposition~II.1, Theorem~II.2]{DiLi} equation~\eqref{treq} has a unique solution $u_\ep$ in $L^\infty(Q_T)$ for a fixed $\ep>0$.
Moreover, using a density argument with $\sigma\in W^{1,2r/(r+2)}_\sharp(Y_2)$ and $u_\ep\in L^\infty(Q_T)$, and the change of test functions $\phi(t,x) \to \phi(t,x)\,\sigma_\ep(t,x)$, the variational formulation of \eqref{treq}
\[
\forall\,\phi\in C^\infty_c([0,T)\times\R^2),\quad
\left\{\ba{l}
\dis -\int_{Q_T}{\partial\phi\over\partial t}(t,x)\,u_\ep(t,x)\,dtdx-\int_{\R^2}\phi(0,x)\,u_0(x,x/\ep)\,dx
\\ \ecart
\dis +\int_{Q_T}{\rm div}\,(\phi(t,x)\,b_\ep(x))\,u_\ep(t,x)\,dtdx=0,
\ea\right.
\]
may be extended to
\[
\forall\,\phi\in C^\infty_c([0,T)\times\R^2),\quad
\left\{\ba{l}
\dis -\int_{Q_T}{\partial\phi\over\partial t}(t,x)\,\sigma_\ep(x)\,u_\ep(t,x)\,dtdx-\int_{\R^2}\phi(0,x)\,\sigma_\ep(x)\,u_0(x,x/\ep)\,dx
\\ \ecart
\dis +\int_{Q_T}{\rm div}\big(\phi(t,x)\,\sigma_\ep(x)\,b_\ep(x)\big)\,u_\ep(t,x)\,dtdx=0.
\ea\right.
\]
Hence, since $\sigma_\ep\,b_\ep$ is divergence free, it follows that for any $\phi\in C^\infty_c([0,T)\times\R^2)$,
\beq\label{ftreq}
\left\{\ba{l}
\dis -\int_{Q_T}{\partial\phi\over\partial t}(t,x)\,\sigma_\ep(x)\,u_\ep(t,x)\,dtdx-\int_{\R^2}\phi(0,x)\,\sigma_\ep(x)\,u_0(x,x/\ep)\,dx
\\ \ecart
\dis +\int_{Q_T}\sigma_\ep(x)\,u_\ep(t,x)\,b_\ep(x)\cdot\nabla_x\phi(t,x)\,dtdx=0,
\ea\right.
\eeq
which implies that the function $v_\ep(t,x):=\sigma_\ep(x)\,u_\ep(t,x)$ is solution to the equation
\beq\label{treqve}
\left\{\ba{ll}
\dis {\partial v_\ep\over\partial t}(t,x)-\sigma_\ep(x)\,b_\ep(x)\cdot\nabla_x u_\ep(t,x)=0 & \mbox{in }Q_T
\\ \ecart
v_\ep(0,x)=\sigma_\ep(x)\,u_0(x,x/\ep) & \mbox{for }x\in\R^2.
\ea\right.
\eeq
Then, multiplying formally $|u_\ep|^{s-2}\,u_\ep$ for $s\geq 1$ in equation \eqref{treqve}, and integrating by parts with $\sigma_\ep\,b_\ep$ divergence free, we get the equality
\[
{d\over dt}\left(\int_{\R^2}\sigma_\ep(x)\,|u_\ep|^s(t,x)\,dx\right)=\int_{\R^2}(\sigma_\ep\,b_\ep)(x)\cdot\nabla\big(|u_\ep|^s(t,x)\big)\,dx=0,
\]
which can be justified by the regularization procedure used in the proof of \cite[Proposition~II.1]{DiLi}.
This combined with $\sigma\in L^1_\sharp(Y_2)$ and $u_0(x,y)\in C^0_c(\R^2;C^0_\sharp(Y_2))$, implies that there exists a constant $c>0$ such that
\beq\label{esieues}
\int_{\R^2}\sigma_\ep(x)\,|u_\ep|^s(t,x)\,dx=\int_{\R^2}\sigma_\ep(x)\,|u_0(x,x/\ep)|^s\,dx\leq c\quad\mbox{for a.e. }t\in(0,T).
\eeq
Next, recalling that $X$ is the flow \eqref{bX} associated with the vector field $b$, then $\ep X(t/\ep,x/\ep)$ is the rescaled flow associated with the rescaled vector field $b_\ep(x)$.
Moreover, using the characteristics method the solution $u_\ep$ to the transport equation \eqref{treq} reads as
\[
u_\ep(x)=u_0\big(\ep X(t/\ep,x/\ep),X(t/\ep,x/\ep)\big)\quad\mbox{for }(t,x)\in Q_T,
\]
with
\[
\forall\,(t,x)\in Q_T,\quad \big|\,\ep X(t/\ep,x/\ep)-x\,\big|=\left|\,\ep\int_0^{t/\ep}b(X(s,x/\ep))\,ds\,\right|\leq \|b\|_{L^\infty(Y_2)^2}.
\]
Hence, since the initial condition $u_0(x,x/\ep)$ is compactly supported independently of $\ep$, so is the function $u_\ep$ in a compact $[0,T]\times K$ of $Q_T$.
Therefore, by the H\"older inequality with the conjugate exponents ${r-1\over q-1}$ and ${r-1\over r-q}$, and by the Sobolev embedding $W^{1,2r/(r+2)}(Y_2)\hookrightarrow L^r_\sharp(Y_2)$ satisfied by $\sigma$ in \eqref{sipr}, from the estimate \eqref{esieues} with $s={q(r-1)\over r-q}$, we deduce  that for a.e. $t\in(0,T)$,
\[
\ba{ll}
\dis \int_{\R^2}\big|\sigma_\ep(x)\,u_\ep(t,x)\big|^{q}\,dx & \dis =\int_{\R^2}(\sigma_\ep(x))^{r(q-1)\over r-1}\,(\sigma_\ep(x))^{r-q\over r-1}\,|u_\ep(t,x)|^q\,dx
\\ \ecart
& \dis \leq \Big(\int_{K}\sigma_\ep^r(x)\,dx\Big)^{q-1\over r-1}\Big(\int_{\R^2}\sigma_\ep(x)\,|u_\ep(t,x)|^{q(r-1)\over r-q}\,dx\Big)^{r-q\over r-1}\leq c,
\ea
\]
which implies that the sequence $v_\ep(t,x):=\sigma_\ep(x)\,u_\ep(t,x)$ is bounded in $L^\infty((0,T);L^{q}(\R^2))$.
\par
On the other hand, by the two-scale convergence of Nguetseng-Allaire \cite{All,Ngu} extended to the $L^r$-spaces in \cite[Section~3]{LNW}, there exists $U(t,x,y)\in L^{q}(Q_T;L^{q}_\sharp(Y_2))$ such that \eqref{U} holds as well as the two-scale limit
\beq\label{2svep}
\lim_{\ep\to 0}\Big(\int_{Q_T}\,v_\ep(t,x)\,\Phi(t,x,x/\ep)\,dt\,dx\Big)=\int_{Q_T\times Y_2}\,U(t,x,y)\,\Phi(t,x,y)\,dtdxdy,
\eeq
for any test function $\Phi\in C^\infty([0,T];C^\infty_c(\R^2;C^\infty_\sharp(Y_2)))$.
\par\noindent
Now, let us follow the two-scale procedure of \cite[Section~2]{HoXi} (see also \cite[Section~2]{BrHe2}).
Testing the variational formulation \eqref{ftreq}, respectively with any function $\phi(t,x)\in C^1_c([0,T)\times\R^2)$, and with function $\phi(t,x)=\ep\,\varphi(t,x)\,\psi(x/\ep)$ for any $\varphi(t,x)\in C^1_c([0,T)\times\R^2)$ and for any $\psi\in C^1_\sharp(Y_2)$, and passing to the two-scale limit as type \eqref{2svep}, we get that, respectively
\beq\label{2slim}
\ba{l}
\dis -\int_{Q_T\times Y_2}{\partial\phi\over\partial t}(t,x)\,U(t,x,y)\,dtdxdy-\int_{\R^2\times Y_2}\phi(0,x)\,u_0(x,y)\,dxdy
\\ \ecart
\dis +\int_{Q_T\times Y_2} b(y)\cdot\nabla_x\phi(t,x)\,U(t,x,y)\,dtdxdy=0,
\ea
\eeq
and
\beq\label{vfdivUb=0}
\ba{l}
\dis \int_{Q_T\times Y_2} \varphi(t,x)\,b(y)\cdot\nabla_y\psi(y)\,U(t,x,y)\,dtdxdy
\\ \ecart
\dis =\int_{Q_T}\varphi(t,x)\Big(\int_{Y_2}U(t,x,y)\,b(y)\cdot\nabla_y\psi(y)\,dy\Big)\,dtdx=0.
\ea
\eeq
Due to the arbitrariness of functions $\ph(t,x)$ and $\psi(y)$, equation \eqref{vfdivUb=0} leads us to (see, {\em e.g.}, \cite[Proposition~1.1]{BrHe2})
\beq\label{divUb=0}
{\rm div}_y(U(t,x,\cdot)\,b)=0\;\;\mbox{in }\cD'(\R^2),\quad\mbox{for a.e. }(t,x)\in(0,T)\times\R^d,
\eeq
or, equivalently, $U(t,x,y)\,dy$ is a signed invariant measure for the flow~$X$ associated with the vector field $b$.
Note that since $1/\sigma\in L^p_\sharp(Y_2)$ by hypothesis, the function $U(t,x,\cdot)/\sigma$ is in $L^{r'}_\sharp(Y_2)$, where by \eqref{U}
\[
{1\over p}+{1\over q}={1\over r'}\,,
\]
while by \eqref{sipr} the function $\sigma$ is in $L^r_\sharp(Y_2)$.
Then, by virtue of \cite[Lemma~2.2]{BrHe2} taking the pair $(\sigma_0,f)=(\sigma,U(t,x,\cdot)/\sigma)$ in the product $L^r_\sharp(Y_2)\times L^{r'}_\sharp(Y_2)$ rather than in $L^1_\sharp(Y_2)\times L^\infty_\sharp(Y_2)$, the function $U(t,x,\cdot)/\si$ is an invariant function for the flow~$X$ (related to Lebesgue's measure), so are the positive and negative parts $U^\pm(t,x,\cdot)/\sigma$ of $U(t,x,\cdot)/\sigma$.
Then, again by \cite[Lemma~2.2]{BrHe2} the measures $U^\pm(t,x,y)\,dy$ are invariant for $X$.
Moreover, applying the collinearity result \eqref{FMcol} with the probability measure $\mu(dx)=\sigma(x)/\overline{\sigma}\,dx$ which by \eqref{sib} is invariant for the flow $X$ and satisfies $\mu(b)=\overline{\sigma\,b}=\zeta\neq 0_{\R^2}$,
we get that $\sfC_b$ is a closed line set carried by a vector line spanned by $\zeta$.
Hence, we deduce that
\beq\label{Upmze}
\overline{U(t,x,\cdot)\,b}=\overline{U^+(t,x,\cdot)\,b}-\overline{U^-(t,x,\cdot)\,b}\;\parallel\zeta\quad\mbox{a.e.}\,(t,x)\in Q_T.
\eeq
Therefore, putting \eqref{Upmze} with the definition \eqref{U} of $v(t,x)$ in the two-scale limit problem \eqref{2slim} we get that
\[
\ba{l}
\dis -\int_{Q_T\times Y_2}{\partial\phi\over\partial t}(t,x)\,v(t,x)\,dtdx-\int_{\R^2\times Y_2}\phi(0,x)\,\overline{\sigma\,u_0(x,\cdot)}\,dx
\\ \ecart
\dis +\int_{Q_T\times Y_2} \nabla_x\phi(t,x)\cdot\Big(\overline{U(t,x,\cdot)\,(b\cdot\zeta)}\;{\zeta\over |\zeta|^2}\Big)\,dtdx=0,
\ea
\]
which is the variational formulation of the desired limit equation \eqref{homeqtrvU}.
\par
Finally, if in addition we assume that $\sfC_b=\{\zeta\}$, condition \eqref{Upmze} becomes the equality
\[
\overline{U(t,x,\cdot)\,b}=\overline{U(t,x,\cdot)}\,\zeta=v(t,x)\,\zeta\quad\mbox{a.e.}\,(t,x)\in Q_T,
\]
which combined with \eqref{homeqtrvU} leads us to the homogenized equation~\eqref{homeqtrv}.
\end{proof}

 \section{Appendix}\label{s.appendix}
 \subsection{Proof of \eqref{CbEb}}\label{a.CbEb}
 Let $\xi$ be an extremal point of the convex set $\sfC_b$. By the definition \eqref{Cb} of $\sfC_b$ there exists an invariant probability measure $\mu\in \cI_b$ such that $\mu(b)=\xi$. As a consequence of \eqref{rhob} and \eqref{rhobCb} any limit point of $X(t,x)/t$ as $t\to\infty$, belongs to $\sfC_b$.
Hence, we deduce from Birkhoff's theorem that 
\[
\lim_{t\r\infty} \frac{X(t,x)}{t} = \Delta(x)\quad \mbox{for $\mu$-a.e.~$x\in Y_d$},
\]
where $\Delta$ is some measurable function on $Y_d$ taking its values in $\sfC_b$ and satisfying
\[
\int_{Y_d} \Delta(x)\, d\mu(x) =  \int_{Y_d} b(x)\, d\mu(x) = \xi.
\]
Let us prove that $A := \{x\in Y_d:\Delta(x)=\xi\}$ is a full measure set. To this end, assume by contradiction that $\mu(A) < 1$, and set $A^c := Y_d\setminus A$. From the equality
\[
\xi = \int_{Y_d} \Delta(x)\, d\mu(x) =   \mu(A)\, \xi + \int_{A^c} \Delta(x)\, d\mu(x),
\]
it follows that 
\beq\label{xicA}
\xi = \frac{1}{\mu(A^c)} \int_{A^c} \Delta(x)\, d\mu(x).
\eeq
Note that the set $\sfC_b\setminus\{\xi\}$ is convex, since $\xi$ is an extremal point of $\sfC_b$. Then, considering the supporting hyperplane to the convex set $\sfC_b$ at the point $\xi$, there exists a non null vector $\vartheta\in\R^d$ such that
\[
\forall\,x\in A^c,\quad \left(\xi - \Delta(x)\right) \cdot \vartheta > 0.
\]
Hence, from \eqref{xicA} we deduce that 
\[
0 < \frac{1}{\mu(A^c)} \int_{A^c} \left(\xi - \Delta(x)\right) \cdot \vartheta\,d\mu(x) = \xi \cdot \vartheta - \left(\frac{1}{\mu(A^c)} \int_{A^c} \Delta(x)\, d\mu(x)\right) \cdot \vartheta = 0,
\]
which yields a contradiction. We have just proved that 
\begin{equation}\label{set-A} 
\mu(A) = 1 \quad \mbox{and}\quad \forall\,x\in A,\;\;\lim_{t\r\infty} \frac{X(t,x)}{t} = \xi.
\end{equation}
Next, define the set
\[
B := \left\{x\in Y_d : \lim_{t\r\infty} \frac{X(t,x)}{t} = \xi\right\}.
\]
There exists an ergodic invariant probability measure $\nu\in \cE_b$ such that $\nu(B) = 1$.
Otherwise, we have $\nu(B) = 0$ for any $\nu\in \cE_b$, since the set $B$ is invariant for the flow $X$ \eqref{bX} associated with $b$ (due to the semi-group property \eqref{sgX}).
Then, by virtue of the ergodic decomposition theorem (see, {\em e.g.}, \cite[Theorem~14.2]{Cou}) we get that for any $\mu\in \cI_b$, $\mu(B) = 0$. This contradicts \eqref{set-A}, since we have $A\subset B$ and $\mu(A)=1$.
\par
Finally, applying Birkhoff's theorem with this measure $\nu\in \cE_b$, we get that
\[
\lim_{t\to\infty} \frac{X(t,x)}{t} = \int_{Y_d} b(y)\, \nu(dy)\quad \mbox{for $\nu$-a.e.~$x\in Y_d$},
\]
which implies that $\xi=\nu(b)$.
\cqfd
\subsection{Proof of the measure representation \eqref{bmuus}}\label{app.rep}
\par\noindent
We have chosen to work first in the space $\R^2$ rather than directly in the torus $Y_2$, for treating the equations with no regular solutions in the distributional sense.
\par
First of all, by virtue of the divergence-curl Proposition~\ref{pro.divcurl} the Borel measure $\widetilde{\mu}$ on $\R^2$ defined by \eqref{tmu} satisfies that $b\,\widetilde{\mu}$ is divergence free in $\R^2$.
Hence, there exists a stream function with bounded variation $u\in BV_{\rm loc}(\R^2)$ (see, {\em e.g.}, \cite[Theorem~3.1]{GiRa}), unique up to an additive constant, such that
\beq\label{btmuu}
b\,\widetilde{\mu}=R_\perp\nabla u\quad\mbox{in }\cD'(\R_2).
\eeq
\par
\par\noindent
Next, let us prove that there exists $ u^\sharp\in BV_\sharp(Y_2)$ such that the stream function $u$ of \eqref{btmuu} reads as
\beq\label{uxitu}
u(x)=\xi\cdot x+u^\sharp(x)\;\;\mbox{for a.e. }x\in\R^2,\quad\mbox{where}\quad\xi:=-\,R_\perp\mu(b).
\eeq
Integrating by parts we have for any $k\in\Z^2$ and any $\Phi\in C^\infty_c(\R^2)^2$,
\[
\int_{\R^2}{\rm div}\,(R_\perp\Phi)(x)\,u(x+k)\,dx=\int_{\R^2}{\rm div}\,(R_\perp\Phi)(x+k)\,u(x)\,dx
=\int_{\R^2}\Phi(x+k)\cdot R_\perp d\nabla u(x),
\]
which by \eqref{btmuu} and \eqref{tmu} yields
\[
\int_{\R^2}{\rm div}\,(R_\perp\Phi)(x)\,u(x+k)\,dx=\int_{Y_2}[\Phi(\cdot+k)]_\sharp(y)\cdot b(y)\,d\mu(y)
=\int_{Y_2}\Phi_\sharp(y)\cdot b(y)\,d\mu(y),
\]
which is independent of the integer vector $k$.
Hence, we deduce that
\[
\forall\,\Phi\in C^\infty_c(\R^2)^2,\quad \int_{\R^2}{\rm div}\,(R_\perp\Phi)(x)\,(u(x+k)-u(x))\,dx=0,
\]
which implies that there exists a constant $c_k\in\R$ such that
\beq\label{uck}
u(x+k)-u(x)=c_k\quad\mbox{for a.e. }x\in\R^2.
\eeq
Then, define the vector $\xi\in\R^2$ by
\beq\label{xiu}
\xi\cdot e_i:=c_i=u(\cdot+e_i)-u\quad\mbox{for }i\in\{1,2\}.
\eeq
It easily follows from \eqref{uck} and \eqref{xiu} that
\[
\forall\,k\in\Z^2,\quad u(x+k)-u(x)=\xi\cdot k\quad\mbox{for a.e. }x\in\R^2,
\]
which implies that the function $ u^\sharp$ defined by
\[
u^\sharp(x):=u(x)-\xi\cdot x\quad\mbox{for a.e. }x\in \R^2,
\]
is $\Z^2$-periodic, or equivalently, $ u^\sharp\in L^2_\sharp(Y_2)$.
\par
Next, by \cite[Lemma~2.3]{BrHe1} there exists a function $\varphi\in C^\infty_c(\R^2)$ such that $\varphi_\sharp=1$ according to~\eqref{tmu}.
On the one hand, integrating by parts we have
\[
\ba{l}
\dis \int_{\R^2}\varphi(x)\,R_\perp d\nabla u(x)=\left(\int_{\R^2}\varphi(x)\,dx\right)R_\perp\xi+\int_{\R^2}\varphi(x)\,R_\perp d\nabla u^\sharp(x)
\\ \ecart
\dis =\left(\int_{Y_2}\varphi_\sharp(y)\,dy\right)R_\perp\xi-R_\perp\int_{\R^2}\nabla\varphi(x)\, u^\sharp(x)\,dx,
\ea
\]
which due to the $\Z^2$ periodicity of $ u^\sharp$ yields
\[
\int_{\R^2}\varphi(x)\,R_\perp d\nabla u(x)=R_\perp\xi-R_\perp\int_{\R^2}\underbrace{\nabla\varphi_\sharp(y)}_{=0}\, u^\sharp(y)\,dy=R_\perp\xi.
\]
On the other hand, by \eqref{tmu} and \eqref{btmuu} we have
\[
\int_{\R^2}\varphi(x)\,R_\perp d\nabla u(x)=\int_{\R^2}\varphi(x)\,b(x)\,d\widetilde{\mu}(x)=\int_{Y_2}\underbrace{\varphi_\sharp(y)}_{=1}b(y)\,d\mu(y)=\mu(b).
\]
Therefore, we get that $\xi=-\,R_\perp\mu(b)$, which leads us to \eqref{uxitu}.
\par
Finally, using successively equality \eqref{uxitu}, integrations by parts over $Y_2$ with $\Z^2$-periodic functions and over $\R^2$ with compact support functions in $\R^2$, and equality \eqref{btmuu}, we get that for any vector-valued function $\Phi\in C^\infty_c(\R^2)^2$,
\[
\ba{l}
\dis \int_{Y_2}\Phi_\sharp(y)\cdot R_\perp d\nabla u(y)=
\int_{Y_2}\Phi_\sharp(y)\cdot R_\perp\xi\,dy+\int_{Y_2}\Phi_\sharp(y)\cdot R_\perp d\nabla u^\sharp(y)
\\ \ecart
\dis =\int_{Y_2}\Phi_\sharp(y)\cdot R_\perp\xi\,dy+\int_{Y_2}{\rm div}\,(R_\perp\Phi_\sharp)(y)\, u^\sharp(y)\,dy
\\ \ecart
\dis =\int_{\R^2}\Phi(x)\cdot R_\perp\xi\,dy+\int_{\R^2}{\rm div}\,(R_\perp\Phi)(x)\, u^\sharp(x)\,dx
\\ \ecart
\dis =\int_{\R^2}{\rm div}\,(R_\perp\Phi)(x)\,(\xi\cdot x+ u^\sharp(x))\,dx=\int_{\R^2}{\rm div}\,(R_\perp\Phi)(x)\,u(x)\,dx
\\ \ecart
\dis =\int_{\R^2}\Phi(x)\cdot R_\perp d\nabla u(x)=\int_{\R^2}\Phi(x)\cdot b(x)\,d\widetilde{\mu}(x)
=\int_{Y_2}\Phi_\sharp(y)\cdot b(y)\,d\mu(y),
\ea
\]
namely,
\[
\forall\,\Phi\in C^\infty_c(\R^2)^2,\quad \int_{Y_2}\Phi_\sharp(y)\cdot R_\perp d\nabla u(y)=\int_{Y_2}\Phi_\sharp(y)\cdot b(y)\,d\mu(y).
\]
This combined with the representation of any smooth function in $C^\infty_\sharp(Y_2)^2$ by $\Phi_\sharp$ for a suitable function $\Phi\in C^\infty_c(\R^2)^2$ (see \cite[Lemma~2.3]{BrHe1}), thus yields by \eqref{uxitu}
\[
b\,\mu=R_\perp\nabla u=\mu(b)+R_\perp\nabla u^\sharp\quad \mbox{in }Y_2,
\]
which is the desired representation \eqref{bmuus}.
\cqfd
\end{document}